\newcommand{\margnote}[1]{
\ifthenelse{\boolean{shownotes}}%
{\marginpar{\raggedright\tiny\texttt{#1}}}%
{}%
}
\newcommand{\hole}[1]{
\ifthenelse{\boolean{shownotes}}%
{\begin{center} \fbox{ \rule {.25cm}{0cm}
\rule[-.1cm]{0cm}{.4cm} \parbox{.85\textwidth}{\begin{center}
\texttt{#1}\end{center}} \rule {.25cm}{0cm}}\end{center}}
{}
}
\newtheorem{lemma}{Lemma}[section]
\newtheorem{theorem}[lemma]{Theorem}
\newtheorem{assumption}[lemma]{Assumption}
\theoremstyle{definition}
\newtheorem{definition}[lemma]{Definition}
\theoremstyle{definition}
\newtheorem{remark}[lemma]{Remark}
\theoremstyle{definition}
\global\let\AddToReset=\@addtoreset}
\newcommand{\cE}{\mathcal{E}}
\newcommand{\cH}{\mathcal{H}}
\newcommand{\R}{\mathbb R}
\newcommand{\T}{\mathbb T}
\newcommand{\eps}{\varepsilon}
\newcommand{\dx}{\mathop{{\rm div}}}
\newcommand{\curl}{\mathop{\rm curl} }
\newcommand{\brho}{\bar \rho}
\newcommand{\baru}{\bar u}
\newcommand{\barm}{\bar m}
\newcommand{\barc}{\bar c} 
\newcommand{\barq}{\bar q}
\newcommand{\norm}[1]{\ensuremath{\left|#1\right|}}
\newcommand{\Norm}[1]{\ensuremath{\left\|#1\right\|}}
\renewcommand{\div}{\operatorname{div}}
\newcommand{\del}{\partial}
\newcommand{\rhoa}{\rho^\alpha}
\newcommand{\ca}{c^\alpha}
\newcommand{\va}{{  u}^\alpha}
\newcommand{\ma}{{ m}^\alpha}
\newcommand{\rre}{\eta^\alpha_{\mathrm{R}}}
\title[Relative energy for Hamiltonian flows]
{Relative energy for the Korteweg theory and related Hamiltonian flows in gas dynamics}
 \author{Jan Giesselmann} 
 \address[Jan Giesselmann]{\newline
  Institute of Applied Analysis and Numerical Simulation\newline
 University of Stuttgart\newline
 Pfaffenwaldring 57\newline
 D-70563 Stuttgart\newline
   Germany} 
 \curraddr{}
 \email{{jan.giesselmann@mathematik.uni-stuttgart.de}} 
 \thanks{
JG partially supported by the 
 German Research Foundation (DFG) via SFB TRR 75 `Tropfendynamische Prozesse unter extremen Umgebungsbedingungen'}
\author{Corrado Lattanzio}
\address[Corrado Lattanzio]{\newline 
Dipartimento di Ingegneria e Scienze dell'Informazione e Matematica
\newline
Universit\`a degli Studi dell'Aquila
\newline
Via Vetoio
\newline
I-67010 Coppito (L'Aquila) AQ 
\newline 
Italy
}
\email{corrado@univaq.it}
\author{Athanasios E. Tzavaras}
\address[Athanasios E. Tzavaras]{\newline 
Computer, Electrical, Mathematical Sciences \& Engineering Division
\newline
King Abdullah University of Science and Technology (KAUST)
\newline
Thuwal,  Saudi Arabia
%
%
\newline
and
\newline
Institute for Applied and Computational Mathematics
\newline
Foundation for Research and Technology
\newline
GR 70013 Heraklion, Crete
\newline
Greece 
}
\email{athanasios.tzavaras@kaust.edu.sa}
 \thanks{
 AET acknowledges the support of the King Abdullah University of Science and Technology (KAUST) and of the
 Aristeia program of the Greek Secretariat for Research through the project DIKICOMA}
\begin{document}
\begin{abstract}
We consider an Euler system with dynamics generated by a potential energy functional. We propose
a form for the relative energy that exploits the  variational structure and derive a relative energy identity. 
When applied to specific energies, this yields relative energy identities for  Euler-Korteweg, the Euler-Poisson, 
the Quantum Hydrodynamics system, and  low order approximations of the Euler-Korteweg system.
For the Euler-Korteweg system we prove a stability theorem between a weak and a strong solution and an associated weak-strong uniqueness theorem.
In the second part we focus on the Navier-Stokes-Korteweg system (NSK) with non-monotone pressure laws: we prove stability for the NSK system
via a modified relative energy approach.  We  prove continuous dependence of solutions on initial data and convergence of solutions 
of a low order model to solutions of the NSK system. 
The last two  results provide  physically meaningful examples of how higher order regularization terms enable
the use of the relative energy framework for models with energies which are not poly- or quasi-convex, compensated by higher-order gradients.
\end{abstract}
\maketitle

\tableofcontents

\section{Introduction}

We study the system of partial differential equations
\begin{equation}
\label{funsys-intro}
\begin{aligned}
\frac{\del \rho}{\del t} + \div (\rho u) &= 0 
\\
\frac{Du}{Dt} :=   \frac{\del u}{\del t} +   (u \cdot \nabla) u  &= -  \nabla \frac{\delta \cE}{\delta \rho} (\rho)
\end{aligned}
\qquad x \in \R^d \, , \; t > 0 \, ,
\end{equation}
where $\rho \ge 0$ is a density obeying the conservation of mass equation, $u$ is the fluid velocity and $m = \rho u$ the momentum flux.
The dynamical equation determining the evolution of $u$ is generated  by a functional $\cE(\rho)$ on the density and
 $\tfrac{\delta \cE}{\delta \rho}(\rho)$ stands for the variational derivative of $\cE(\rho)$. The dynamics \eqref{funsys-intro} formally
 satisfies the conservation of energy equation
 \begin{equation}
\label{toteintro}
\frac{d}{dt} \left (  \int \tfrac{1}{2} \rho |u|^2 \, dx  + \cE (\rho)  \right ) = 0  \, .
\end{equation}

Depending on the selection of the  functional $\cE (\rho)$ several models of interest fit under this framework  (see Section \ref{sec:ham}). 
These include the equations of isentropic gas dynamics,
the Euler-Poisson system ({\it e.g.\ }\cite{PW05,LS09,JLLJ10}), the system of quantum hydrodynamics ({\it e.g.}\ \cite{AM09,AM12}), the Euler-Korteweg system ({\it e.g.\ }\cite{DS85,B10}),
and order-parameter models for the study of phase transitions ({\it e.g.\ }\cite{BLR95,RT00,Roh10}).
The objective here is to review the relation of  these problems with the formal structure \eqref{funsys-intro} and to use the structure in order to obtain a relative entropy
identity. The idea is quite simple: most (but not all) of the problems above are generated by convex functionals. 
For convex functionals, it is natural to employ the difference
between $\cE (\rho)$ and the linear part of
the Taylor expansion around $\bar\rho$, 
\begin{equation}
\label{repotedef-intro}
\cE (\rho | \brho) := \cE(\rho) - \cE (\brho) - \big \langle \frac{\delta \cE}{\delta \rho} (\brho) , \rho - \brho \big \rangle \, , 
\end{equation}
in order to compare  the distance between two states $\rho (t, \cdot)$ and $\brho (t, \cdot)$.
This definition involves the directional derivative of $\cE (\rho)$ in the direction $(\rho - \brho)$
and provides a functional called here relative potential energy. The relative energy is used, along with the relative kinetic energy
\begin{equation}
\label{rekinedef-intro}
K(\rho, m | \brho, \barm) = \int \rho \Big | \frac{m}{\rho} - \frac{\barm}{\brho} \Big |^2 dx \, ,
\end{equation}
as a yardstick for comparing the distance between two solutions $(\rho, u)$ and $(\brho, \baru)$.
An additional ingredient is needed: we postulate the existence of a stress tensor (functional) $S(\rho)$ such that
\begin{equation}
\label{stresshyp-intro}
- \rho \nabla \frac{\delta \cE}{\delta \rho} =  \nabla \cdot S \, .
\end{equation}
Hypothesis \eqref{stresshyp-intro} holds for all the above examples, it gives a meaning to the notion of weak solutions for \eqref{funsys-intro}
as it induces a conservative form
\begin{equation}
\label{wkfunsys-intro}
\begin{aligned}
\frac{\del \rho}{\del t} + \dx (\rho u) &= 0 
\\
\frac{\del}{\del t} (\rho u) + \dx (\rho u \otimes u) &=  \nabla \cdot S  \, ,
\end{aligned}
\end{equation}  
and plays an instrumental role in devising a relative entropy identity. For the Euler-Korteweg system it is a consequence of the invariance under translations
of the generating functional and Noether's theorem (see \cite{B10} and Appendix \ref{app-noether}). The relative energy identity takes the  abstract form 
\begin{equation}
\label{reltote-intro}
\begin{aligned}
&\frac{d}{dt} \left ( \cE (\rho | \brho) +  \int \tfrac{1}{2} \rho |u - \baru|^2 \, dx \right )
\\
&\qquad \qquad = \int  \nabla \baru : S (\rho | \brho )   \, dx  - \int \rho \nabla \baru : (u - \baru) \otimes (u - \bar u) \, dx \, ,
\end{aligned}
\end{equation}
where $\cE (\rho | \brho)$ is defined in \eqref{repotedef-intro}, while  the relative stress functional is
\begin{equation}
\label{relstress-intro}
S ( \rho | \brho) := S(\rho) - S (\brho) - \big \langle \frac{\delta S}{\delta \rho} (\brho) , \rho - \brho \big \rangle \, .
\end{equation}

Formula \eqref{reltote-intro} is the main result of this article. It should be compared to the well known relative entropy formulas initiated in the works 
Dafermos \cite{Daf79,Daf79b}, DiPerna \cite{Dip79}  and analogs that have been  successfully used in many contexts 
({\it e.g.}\ \cite{LT06, LV11,FN12,BT13,LT13,Gie_14a}). 
It has however a different origin from all these calculations:
while the latter are based on the thermodynamical structure induced by the Clausius-Duhem inequality, the formula \eqref{reltote-intro} is based
on the abstract Hamiltonian flow structure in \eqref{funsys-intro}. The reader  familiar with the ramifications of
continuum thermodynamics will observe that, as noted by Dunn and Serrin \cite{DS85}, constitutive theories with higher-order gradients (unless trivial) are inconsistent 
with the Clausius-Duhem structure, and in order to make them compatible one has to introduce the interstitial work term in the energy equation.
Nevertheless, as shown in Sections  \ref{sec-funcrel-pote}-\ref{sec-funcrel-tote},  the structure 
\eqref{funsys-intro}, \eqref{stresshyp-intro}   induces a relative entropy identity provided $\cE(\rho | \brho)$   is defined through
the Taylor expansion of the generating functional.

Despite this difference, there is also striking similarity between the formulas obtained in \cite{Daf79b,LT13} and the formula \eqref{reltote-intro} 
in connection to the mechanical interpretations for the relative mechanical stress and the relative convective stress.
This similarity  is conceptually quite appealing, yet the simplicity of the formula \eqref{reltote-intro} is somewhat misleading.
In fact, the actual formulas in specific examples are cumbersome, as can be noticed in Section \ref{sec-EK} in formula \eqref{reltotekort} 
derived  for the Euler-Korteweg system, or in Section \ref{sec-EP} in the relative energy formula \eqref{reltoteep} for the Euler-Poisson system. 
In addition, the simplicity of the derivation using the functional framework presented in Sections \ref{sec-funcrel-pote}, \ref{sec-funcrel-kine} 
should be contrasted to the lengthy direct derivation  in Appendix \ref{subsec:relenKort}
for the Euler-Korteweg system.

Our work is closely related to the observations of Benzoni-Gavage et al.\ \cite{BDDJ05,B10} that the Euler-Korteweg system 
can be formally expressed for irrotational flow velocity fields as a Hamiltonian system. While for general flows the structure of the problem
fails to be Hamiltonian (see \eqref{hamiltonianform}), this discrepancy is consistent with energy conservation and as shown here induces 
with \eqref{stresshyp-intro} the relative energy identity. 

Gradient flows generated by functionals $\cE(\rho)$ in Wasserstein distance
give rise to parabolic equations 
\begin{equation}
\label{limitdiff}
\del_t \rho = \nabla \cdot \rho \nabla \frac{\delta \cE}{\delta \rho}
\end{equation}
and have received ample  attention in multiple contexts ({\it e.g.}\ \cite{Ot01,Br14}). Such systems can be seen as describing the long-time response
of an Euler equation with friction, see \eqref{funsys}. At long times the friction term tends to equilibrate with the gradient of the energy, 
$\rho u \sim - \rho \nabla \frac{\delta \cE}{\delta \rho}$, and produces the diffusion equation \eqref{limitdiff}. 
A justification of this process using the relative energy framework 
is undertaken in the companion articles \cite{LTforth,LT13}; their analysis is based on the relative energy formula \eqref{reltote}, the variant of 
\eqref{reltote-intro} accounting for the effect of friction.

In the second part of this work, we establish some applications of \eqref{reltote-intro}. We consider first the Euler-Korteweg system
\begin{equation}
    \begin{cases}
	\displaystyle{\rho_{t} +\dx (\rho u) =0}&   \\
	& \\
       \displaystyle{ (\rho u)_{t}} + \dx (\rho u \otimes u)
	=  - \rho \nabla \Big ( F_\rho(\rho, \nabla \rho)  - \dx F_q (\rho, \nabla \rho)   \Big )\, , &  \\
    \end{cases}
    \label{Kortcap-intro}
\end{equation}
generated by a convex energy
\begin{equation}
\label{kortfunc-intro}
\cE (\rho) = \int F(\rho, \nabla \rho ) \, dx = \int h(\rho) + \frac{1}{2} \kappa(\rho) |\nabla \rho|^2  \, dx\, ,
\end{equation}
and proceed to compare a conservative (or dissipative) weak solution $(\rho, m)$ to a strong solution $(\brho, \barm)$ of \eqref{Kortcap-intro}.
First, we establish the relative entropy transport formula for comparing solutions of this regularity classes in Theorem \ref{theo:relenweak}.
This, in turn, is used to establish two stability results between weak and strong solutions of \eqref{Kortcap-intro}: 
\begin{itemize}
\item[ (i) ]
Theorem \ref{th:finalKortconst2}
valid for convex energies \eqref{kortfunc-intro} under the hypothesis that the densities $\rho$ of the weak solution 
and $\brho$ of the strong solution are bounded and do not contain vacuum regions;
this generalizes to energies \eqref{kortfunc-intro} the result of \cite{DFM15} valid for energies with $\kappa (\rho) = C_\kappa$.
\item[(ii)] Theorem \ref{th:finalKortconst} which restricts  \eqref{kortfunc-intro} to convex  energies 
with constant capillarity ($\kappa(\rho) = C_\kappa$) and to strong solutions with density $\brho$ bounded away from vacuum, but in return
does not place any boundedness assumptions on the density $\rho$ except for the natural energy norm bound.
\item[(iii)] Theorem \ref{th:finalstabqhd} providing a weak-strong stability result for the quantum hydrodynamics system \eqref{qhd}.
\end{itemize}

Augmenting the Euler-Korteweg system with  viscosity leads to the isothermal Navier-Stokes-Korteweg (NSK) model. 
When the capillarity coefficient $\kappa (\rho) = C_\kappa > 0$ is constant in \eqref{kortfunc-intro} and the pressure function
$p(\rho)$ is non-monotone, the NSK system
\begin{equation}\label{NSK-intro}
 \begin{split}
   \rho_t + \div(\rho u) &=0\\
   (\rho  u)_t + \div (\rho  u \otimes  u) + \nabla p(\rho) &= \div(\sigma[ u]) + C_\kappa \rho \nabla \triangle \rho\, ,
 \end{split}
\end{equation}
is a well-known model for compressible liquid-vapor flows undergoing phase transitions of  diffuse interface type
({\it e.g.} \cite{DS85,B10}).
The term
\begin{equation}\label{def:nss_int}
\sigma[u]:= \lambda \div( u) \mathbb{I}+ \mu (\nabla u + (\nabla u)^T)
\end{equation}
is the Navier-Stokes stress with coefficients $\lambda, \mu$ satisfying $\mu\geq0$ and $\lambda + \frac{2}{d} \mu \geq 0,$ and 
$\mathbb{I} \in \mathbb{R}^{d \times d}$ is the unit matrix and $d$ the spatial dimension.
To describe multi-phase flows the energy density $h=h(\rho)$ is non-convex and the associated pressure
$ p'(\rho)= \rho h''(\rho)$ is non-monotone, so that the first order part of \eqref{NSK-intro} is  a system of mixed hyperbolic-elliptic type.

The relative entropy technique usually applies to situations where the energy is convex
 or at least quasi-convex or poly-convex \cite{Daf86,LT06,Daf10}.
Here we provide some  examples on how -- in a physically meaningful and multi-dimensional situation --
the higher-order (second gradient) regularization mechanism
compensates for the non-convexity of the energy in such a way that the relative entropy technique still
provides stability estimates.
This extends results  from \cite{Gie_14a}, valid in a one-dimensional Lagrangian setting.
To highlight the use of a stability theory for \eqref{NSK-intro} derived via a modified relative entropy approach,
we prove two results:
\begin{itemize}
\item[(a)]
we show stability of smooth solutions to \eqref{NSK-intro} or to the associated Korteweg-Euler system with non-monotone pressures
for initial-data with equal initial mass.
\item[(b)]
we  show that  solutions of a lower order approximation to the Navier-Stokes-Korteweg system given by \eqref{lo-intro} below
converge to solutions of \eqref{NSK-intro} in the limit $\alpha \to \infty$.
\end{itemize}

To place (b) in the relevant context, we note that the system 
\begin{equation}\label{lo-intro}
 \begin{split}
\frac{\partial}{\partial t}   \rho + \div(\rho u) &=0\\
\frac{\partial}{\partial t}   (\rho u) + \div (\rho u \otimes u) + \nabla (p(\rho) + C_\kappa \frac{\alpha}{2} \rho^2)&= \div(\sigma[ u])
  + C_\kappa\alpha \rho \nabla  c\\
  c - \frac{1}{\alpha}\triangle c &= \rho\, ,
 \end{split}
\end{equation}
where  $C_\kappa > 0$ and $\alpha>0$  a parameter, was introduced in \cite{Roh10} with the goal
to approximate the NSK system \eqref{NSK-intro}. It has motivated efficient numerical schemes for the numerical treatment of diffuse interface
systems for the description of phase transitions (see the discussion in the beginning of Section \ref{sec:mc}).  The model convergence from \eqref{lo-intro} to \eqref{NSK-intro}
as $\alpha \to \infty$ was investigated in simple cases by \cite{Cha14} (using  Fourier methods) and \cite{Gie_14a}  (in a 1-d setting in Lagrangean coordinates). 
Here, we exploit the fact that the system \eqref{lo-intro} fits into the functional framework
of \eqref{funsys-intro}, \eqref{stresshyp-intro} and is thus equipped with a relative energy
identity (see Section \ref{sec-LOA}).  Using the relative energy framework,
as modified in Section \ref{sec:cd} pertaining to non-monotone pressures, we prove in Theorem \ref{thrm:mt2} convergence  from \eqref{lo-intro} to \eqref{NSK-intro}
as $\alpha \to \infty$ for initial data of equal initial mass.

%
%

The structure of the paper  is as follows: in Section \ref{sec:ham} we discuss the derivation of the formal relative energy estimate 
 \eqref{reltote-intro}  at the level of the abstract equation \eqref{funsys-intro}, and then we implement its application to various models:
the Euler-Korteweg system in Section \ref{sec-EK}, the Euler-Poisson system in Section  \ref{sec-EP}, and the order parameter model of lower order
in Section \ref{sec-LOA}.
The relation between \eqref{funsys-intro} and a Hamiltonian structure 
is discussed in Section \ref{sec-hamrem}.
In Section \ref{sec:kort}  we consider the Euler-Korteweg system \eqref{Kortcap-intro} and we establish 
 the relative energy estimate for solutions of limited smoothness, that is between a dissipative (or conservative) weak solution and a strong solution
 of \eqref{Kortcap-intro}. We then derive the two weak-strong stability Theorems  \ref{th:finalKortconst}  and \ref{th:finalKortconst2}.
In Section \ref{sec:cd} we consider the Navier-Stokes-Korteweg system \eqref{NSK-intro} with non-monotone pressure $p(\rho)$,
we present a relative energy calculation and show that solutions of \eqref{NSK-intro} depend continuously on their
initial data in Theorem \ref{thrm:mt1}.
In Section \ref{sec:mc} we briefly introduce the lower order model \eqref{lo-intro} and  derive an estimate for the
difference between solutions
of \eqref{lo-intro} and \eqref{NSK-intro}. In the appendices we present: a remark on the relation of \eqref{stresshyp-intro} to the invariance under spatial-translations
of the energy functional and Noether's theorem in Appendix \ref{app-noether}; a direct derivation of the relative energy identity for the Euler-Korteweg system
\eqref{Kortcap-intro} in Appendix \ref{subsec:relenKort}; the derivation of the relative energy identity for the lower order model \eqref{lo-intro} in Appendix  \ref{sec:proofmc}.

\section{Hamiltonian flow and the relative energy}
\label{sec:ham}

In this section we consider a system of equations consisting of a conservation of mass and a functional momentum equation
\begin{equation}
\label{funsys}
\begin{aligned}
\frac{\del \rho}{\del t} + \dx (\rho u) &= 0 
\\
\rho \frac{Du}{Dt} &= - \rho \nabla \frac{\delta \cE}{\delta \rho} - \zeta \rho u\, ,
\end{aligned}
\end{equation}
where $\rho \ge  0$ is the density, $u$ the velocity, and $\frac{D}{Dt}= \frac{ \del}{\del t} + u \cdot \nabla$ stands for the material derivative operator.
In \eqref{funsys}$_2$,  $\frac{\delta \cE}{\delta \rho}$ stands for  the variational derivative of  $\cE(\rho)$ (see the discussion below).  The term $(-\zeta \rho u)$   in \eqref{funsys} corresponds to a damping force with
frictional coefficient $\zeta > 0$;  the particular frictionless case $\zeta = 0$, leading to \eqref{funsys-intro}, is also allowed.
The objective of the section is to derive the main   relative entropy calculation 
of this work and to apply it to certain specific systems. The derivation is formal in nature and has
to be validated via alternative methods for solutions of limited smoothness ({\it e.g.}\ for weak or for measure-valued solutions).
Nevertheless, the formal derivation  uses in an essential way the functional structure of  \eqref{funsys} and it is fairly simple to achieve via this formalism.

In the sequel  $\cE (\rho)$ will stand for a functional on the density $\rho (\cdot , t)$. The directional derivative (Gateaux derivative) of the functional 
$\cE : U \subset X \to \R$, where $U$ is an open subset of $X$, a locally convex topological vector space, is defined by
$$
d\cE (\rho ; \psi ) = \lim_{\tau \to 0}  \frac{\cE (\rho + \tau \psi) - \cE (\rho)}{\tau} = \frac{d}{d\tau} \cE (\rho + \tau \psi ) \Big |_{\tau = 0}\, ,
$$
with the limit taken over the reals. When the limit exists the functional is Gateaux differentiable at $\rho$ and $d\cE (\rho ; \psi )$ is the directional derivative. 
The function $d \cE (\rho, \cdot)$ is homogeneous of order one, but may in general fail to be linear (in $\psi$). 
We will assume that  $d\cE (\rho ; \psi )$ is linear in $\psi$ and can be 
represented via a duality bracket
\begin{equation}
\label{funchyp1}
d\cE (\rho ; \psi ) = \frac{d}{d\tau} \cE (\rho + \tau \psi ) \Big |_{\tau = 0} 
= \big \langle \frac{\delta \cE}{\delta \rho }(\rho) , \psi  \big \rangle\, ,
\end{equation}
with $\frac{\delta \cE}{\delta \rho }(\rho)$ standing for the generator of the bracket. When this representation holds
$\frac{\delta \cE}{\delta \rho }(\rho)$ is called the variational derivative of $\cE (\rho)$.

Further, we define the second variation via
$$
d^2 \cE (\rho;  \psi, \varphi) = \lim_{\eps \to 0} 
\frac{ \big \langle \frac{\delta \cE}{\delta \rho }(\rho + \eps \varphi) , \psi  \big \rangle - \big \langle \frac{\delta \cE}{\delta \rho }(\rho) , \psi  \big \rangle }{\eps}
$$
(whenever the limit exists) and we assume that this can be represented as a bilinear functional in the form
\begin{equation}
\label{funchyp2}
d^2 \cE (\rho;  \psi, \varphi) = \lim_{\eps \to 0}  
\frac{ \big \langle \frac{\delta \cE}{\delta \rho }(\rho + \eps \varphi) , \psi  \big \rangle - \big \langle \frac{\delta \cE}{\delta \rho }(\rho) , \psi  \big \rangle }{\eps}
= \left \langle  \left \langle \frac{\delta^2 \cE }{\delta \rho^2} (\rho) , ( \psi, \varphi ) \right \rangle \right \rangle\, .
\end{equation}

We note that \eqref{funchyp1} and \eqref{funchyp2} hold when the functional $\cE (\rho)$ is Fr\'echet differentiable on a Banach space $X$ and with sufficient smoothness.
Moreover, in a framework when $X$ is a Fr\'echet space (locally convex topological vector space that is metrizable) there are available theorems that show  
that continuity of $ d \cE (\rho, \psi) :  X \times X \to \R$  guarantees the linearity of $ d \cE (\rho, \cdot)$;  corresponding theorems also hold for the
second variation; see \cite[Sec I.3]{hamilton82}. In the following formal calculations we will place  \eqref{funchyp1} and \eqref{funchyp2} as hypotheses and will
validate them for two examples: the Euler-Korteweg and the Euler-Poisson system.

By standard calculations the system \eqref{funsys} can be expressed in the form 
\begin{equation}
\label{wkfunsys}
\begin{aligned}
\frac{\del \rho}{\del t} + \dx (\rho u) &= 0 
\\
\frac{\del}{\del t} (\rho u) + \dx (\rho u \otimes u) &= - \rho \nabla \frac{\delta \cE}{\delta \rho} - \zeta \rho u\, .
\end{aligned}
\end{equation}  
The left part of \eqref{wkfunsys} is in conservation form, however this is not generally true for the term $\rho \nabla \frac{\delta \cE}{\delta \rho}$.
Nevertheless, for all examples of interest here it will turn out that
\begin{equation}
\label{stresshyp}
- \rho \nabla \frac{\delta \cE}{\delta \rho} =  \nabla \cdot S \, ,
\end{equation}
where $ S =S (\rho)$ will be a tensor-valued functional on $\rho$ that plays the role of a stress tensor and has components $S_{i j} (\rho)$ with 
$i, j = 1, ... , d$.

At this stage \eqref{stresshyp} is placed as a hypothesis. This hypothesis is validated in the sequel for various specific models.
It also turns out that it is a fairly general consequence of the invariance of the functional
$\cE(\rho)$ under space translations via Noether's theorem (see appendix). 
Note that \eqref{stresshyp} gives a meaning to weak solutions for \eqref{wkfunsys} and is
instrumental  in the 
forthcoming calculations for the potential, kinetic and total energy. It is assumed
 that no work is done at the boundaries and thus integrations by parts do not result to boundary contributions. 
In particular they are valid for periodic boundary conditions ($x \in \T^d$ the torus) or on the entire space ($x \in \R^d$).

 The potential energy is computed via
\begin{equation}
\label{pote}
\frac{d}{dt} \cE (\rho) \stackrel{(\ref{funchyp1})}{=}  \langle \frac{\delta \cE}{\delta \rho} (\rho) , \rho_t \rangle 
\stackrel{(\ref{funsys})}{=} 
- \langle \frac{\delta \cE}{\delta \rho} (\rho) , \dx (\rho u) \rangle
\stackrel{(\ref{stresshyp})}{=} 
  \int S  : \nabla u \, dx\, .
\end{equation}
Taking the inner product of \eqref{funsys}$_2$ by $u$ gives
$$
\tfrac{1}{2}\rho \frac{D}{Dt} |u|^2 = - u \cdot \rho \nabla \frac{\delta \cE}{\delta \rho} - \zeta \rho |u|^2 \stackrel{\eqref{stresshyp}}{=} u \cdot \dx S - \zeta \rho |u|^2\, .
$$
Then, using \eqref{funsys}$_1$, we obtain
$$
 \tfrac{1}{2} \del_t(\rho |u|^2) + \dx ( \tfrac{1}{2}\rho |u|^2 u) = u \cdot \dx S - \zeta \rho |u|^2\, ,
$$
which leads to the evolution of the kinetic energy
\begin{equation}
\label{kine}
\frac{d}{dt} \int \tfrac{1}{2} \rho |u|^2 \, dx = - \int \Big (S : \nabla u  + \zeta \rho |u|^2\Big ) \, dx \, .
\end{equation}
Combining \eqref{pote} and \eqref{kine} provides the balance of total energy 
\begin{equation}
\label{tote}
\frac{d}{dt} \left (  \int \tfrac{1}{2} \rho |u|^2 \, dx  + \cE (\rho)  \right ) = - \zeta \int \rho |u|^2 \, dx \, .
\end{equation}
In the frictionless case $\zeta = 0$ the total energy is conserved, while for $\zeta > 0$ there is dissipation due to friction.

\subsection{The relative potential energy} \label{sec-funcrel-pote}

Consider now the system \eqref{funsys} generated by the functional $\cE (\rho)$ and assume that the functional is convex. 
A natural quantity to monitor is provided by the quadratic part of the Taylor series expansion of the functional with respect to a reference solution $\bar\rho(x,t)$; this quantity
 is called here relative potential energy
and is defined by
\begin{equation}
\label{relendef}
\cE (\rho | \brho) := \cE(\rho) - \cE (\brho) - \big \langle \frac{\delta \cE}{\delta \rho} (\brho) , \rho - \brho \big \rangle\, .
\end{equation}

We next develop the relative potential energy calculation, which is based on the hypotheses \eqref{funchyp1}, \eqref{funchyp2}, \eqref{stresshyp}
and \eqref{funchyp3} below.
For $\varphi_i$ a vector valued test function, $i = 1, ..., d$,  the weak form of \eqref{stresshyp} reads
\begin{equation}
\label{varder1}
\big \langle \frac{\delta \cE}{\delta \rho} (\rho) , \frac{\del}{\del x_j} (\rho \varphi_j) \big \rangle = - \int S_{i j} (\rho) \, \frac{\del \varphi_i}{\del x_j } \, dx\, ,
\end{equation}
where we employ the summation convention over repeated indices.  

Recall that the stress $S(\rho)$ is a functional of the density $\rho$. We assume that the directional derivative of $S$ is expressed as 
a linear functional via a duality bracket,
\begin{equation}
\label{funchyp3}
d S  (\rho ; \psi ) = \frac{d}{d\tau} S (\rho + \tau \psi ) \Big |_{\tau = 0} 
= \big \langle \frac{\delta S}{\delta \rho }(\rho) , \psi  \big \rangle \, ,
\end{equation}
in terms of the variational derivative  $\frac{\delta S}{\delta \rho }(\rho)$ (in complete analogy to \eqref{funchyp1}).

We now take the directional derivative  of \eqref{varder1}  --- viewed as a functional in $\rho$ ---  along a
direction $\psi$,  with $\psi$ a smooth test function,  and use \eqref{funchyp2}, \eqref{funchyp3}
to arrive at the formula
\begin{equation}
\label{varder2}
 \left \langle  \left \langle \frac{\delta^2 \cE }{\delta \rho^2} (\rho) , \big ( \psi, \frac{\del}{\del x_j} (\rho \varphi_j) \big ) \right \rangle \right \rangle
 + 
  \left \langle  \frac{\delta \cE}{\delta \rho} (\rho) , \frac{\del}{\del x_j} (\psi \varphi_j) \right \rangle 
 =
 - \int  \left \langle  \frac{\delta S_{ij} }{\delta \rho} (\rho) , \psi  \right \rangle   \frac{\del \varphi_i }{\del x_j} \, dx \, . 
 \end{equation}

Let now $(\rho, u)$ and $(\brho, \baru)$ be two smooth solutions of \eqref{funsys}. Using \eqref{pote}, \eqref{funsys}, we compute
\begin{align*}
\del_t \Big (  &\cE(\rho) - \cE (\brho) - \big \langle \frac{\delta \cE}{\delta \rho} (\brho) , \rho - \brho \big \rangle \Big )
\\
&\quad = \big \langle \frac{\delta \cE}{\delta \rho} (\rho)  , - \dx (\rho u) \big \rangle 
- \big \langle \frac{\delta \cE}{\delta \rho} (\brho) , - \dx (\brho \baru) \big \rangle 
\\
&\qquad - \big \langle \big \langle \frac{\delta^2 \cE}{\delta \rho^2} (\brho) , ( \brho_t , \rho - \brho ) \big \rangle \big \rangle 
+  \big \langle \frac{\delta \cE}{\delta \rho} (\brho) , \dx ( \rho u - \brho \baru) \big \rangle  
\\
&\quad \stackrel{(\ref{funsys})}{=} \big \langle \frac{\delta \cE}{\delta \rho} (\rho)  , - \dx (\rho u) \big \rangle 
- \big \langle \frac{\delta \cE}{\delta \rho} (\brho) , - \dx (\brho \baru) \big \rangle 
+ \big \langle \frac{\delta \cE}{\delta \rho} (\brho) , \dx \big (\rho (u - \baru)\big ) \big \rangle  
\\
&\qquad + \big \langle \big \langle \frac{\delta^2 \cE}{\delta \rho^2} (\brho) , ( \dx (\brho \baru) , \rho - \brho ) \big \rangle \big \rangle 
+  \big \langle \frac{\delta \cE}{\delta \rho} (\brho) , \dx  \big ( ( \rho - \brho)   \baru \big ) \big \rangle   
\\
&\stackrel{(\ref{varder1}) , (\ref{varder2})}{=}   \big \langle S_{i j} (\rho)  , \frac{\del u_i}{\del x_j} \big \rangle - \big \langle S_{i j} (\brho)  , \frac{\del \baru_i}{\del x_j} \big \rangle 
- \big \langle \frac{\delta \cE}{\delta \rho} (\rho) - \frac{\delta \cE}{\delta \rho} (\brho) , \dx \big (\rho (u - \baru)\big ) \big \rangle 
\\
&\qquad - \big \langle S_{i j} (\rho)  ,  \frac{\del}{\del x_j} ( u_i - \baru_i ) \big \rangle  
   - \int  \left \langle  \frac{\delta S_{ij} }{\delta \rho} (\brho) , \rho - \brho  \right \rangle   \frac{\del \baru_i }{\del x_j} \, dx
\\
&\qquad =  \int  \Big ( S_{i j} (\rho) - S_{i j}  (\brho) - \big \langle  \frac{\delta S_{ij} }{\delta \rho} (\brho) , \rho - \brho  \big \rangle \Big ) \frac{\del \baru_i }{\del x_j} \, dx
- \big \langle \frac{\delta \cE}{\delta \rho} (\rho) - \frac{\delta \cE}{\delta \rho} (\brho) , \dx \big (\rho (u - \baru) \big ) \big \rangle 
\end{align*}

We next define the relative stress tensor 
\begin{equation}
\label{relstress}
S ( \rho | \brho) := S(\rho) - S (\brho) - \big \langle \frac{\delta S}{\delta \rho} (\brho) , \rho - \brho \big \rangle
\end{equation}
and conclude with the relative potential energy balance
\begin{equation}
\label{relpote}
\frac{d}{dt} \cE (\rho | \brho) = \int S_{i j} (\rho | \brho )  \frac{\del \baru_i }{\del x_j} \, dx
- \big \langle \frac{\delta \cE}{\delta \rho} (\rho) - \frac{\delta \cE}{\delta \rho} (\brho) , \dx \big (\rho (u - \baru)\big ) \big \rangle \, .
\end{equation}

\subsection{The relative kinetic energy} \label{sec-funcrel-kine}
Next consider the kinetic energy functional
\begin{equation}
\label{kinedef}
K (\rho, m) = \int \frac{1}{2} \frac{ |m|^2}{\rho} dx
\end{equation}
viewed as a functional on the density $\rho$ and the momentum $m = \rho u$. The integrand of
this functional,
$k(\rho, m) = \tfrac{1}{2} \frac{ |m|^2}{\rho}$, has as its Hessian the  $(d+1) \times (d+1)$ matrix
$$
\nabla^2_{(\rho, m)} k(\rho, m) =
 \begin{pmatrix}
       \displaystyle{ \frac{|m|^{2}}{\rho^{3}}} &
       \displaystyle{ - \frac{m}{\rho^{2}}}
       \\
	\displaystyle{- \frac{m^T}{\rho^{2}}} & 
	\displaystyle{\frac{1}{\rho} \mathbb{I}_{d\times d} } 
    \end{pmatrix} \; ,
$$
which has eigenvalues
\begin{equation*}
    \lambda_{1}=0,\ \lambda_{2} = ...  = \lambda_{d} = \frac{1}{\rho}> 0,\ \lambda_{d+1} = 
    \frac{1}{\rho} + \frac{|m|^{2}}{\rho^{3}}>0 \, ,
\end{equation*}
and, given a vector $A = (a , b )^T$, $a \in \R$, $b \in \R^d$, its associated quadratic form is given by
$$
A \cdot ( \nabla^2_{(\rho, m)} k(\rho, m)  )  A = 
( a, b^T  ) \cdot 
\begin{pmatrix}
       \displaystyle{ \frac{|m|^{2}}{\rho^{3}}} &
       \displaystyle{ - \frac{m}{\rho^{2}}}
       \\
	\displaystyle{- \frac{m^T}{\rho^{2}}} & 
	\displaystyle{\frac{1}{\rho} \mathbb{I}_{d\times d} } 
    \end{pmatrix} 
\begin{pmatrix}
a \\ b
\end{pmatrix}
=   \frac{1}{\rho} \left | \frac{m}{\rho} a  -b \right |^2 .
$$
Therefore, it is positive semidefinite for $\rho >  0$ and as a consequence the kinetic energy functional $K(\rho, m)$ is convex  (though not strictly convex) as a functional in $(\rho , m)$.

The relative kinetic energy is easily expressed in the form
\begin{equation}
\label{relkinenergy}
\begin{aligned}
K ( \rho, m | \brho, \barm ) &: = \int k(\rho, m) - k(\brho, \barm) - \nabla k (\brho, \barm) \cdot  (\rho - \brho, m - \barm) \, dx
\\
&= \int \frac{1}{2} \frac{ |m|^2}{\rho}  - \frac{1}{2} \frac{ |\barm|^2}{\brho} 
- \big ( -\frac{1}{2} \frac{|\barm |^2}{\brho^2} , \frac{\barm}{\brho} \big ) \cdot (\rho - \brho, m - \barm) \,  dx
\\
&= \int \frac{1}{2} \rho | u - \baru|^2 \, dx\, .
\end{aligned}
\end{equation}

To compute the evolution of the relative kinetic energy, we consider the difference of the two equations \eqref{funsys}$_2$
satisfied by $(\rho, u)$ and $(\brho, \baru)$,
$$
\del_t (u-\baru) + (u \cdot \nabla) (u-\baru)   + \big ( (u - \baru) \cdot \nabla ) \baru 
= - \nabla \Big ( \frac{\delta \cE}{\delta \rho} (\rho) - \frac{\delta \cE}{\delta \rho} (\brho) \Big ) - \zeta (u - \baru)
$$
and take the inner product with $(u - \baru)$ to deduce
$$
 \frac{1}{2} \frac{D}{Dt}   | u - \baru |^2  + \nabla \baru : (u - \baru) \otimes (u - \bar u) = 
- (u - \bar u) \cdot \nabla \Big ( \frac{\delta \cE}{\delta \rho} (\rho) - \frac{\delta \cE}{\delta \rho} (\brho) \Big )
- \zeta |u - \baru|^2\, ,
$$
where $\frac{D}{Dt} = \frac{\del}{\del t} + u \cdot \nabla$ is the material derivative with respect to $u$.
Using \eqref{funsys}$_1$, this is  expressed in the form
$$
\begin{aligned}
\del_t \left (\tfrac{1}{2} \rho |u - \baru|^2\right ) + \div \left (\tfrac{1}{2} \rho u |u - \baru|^2\right ) = - \rho \nabla \baru : (u - \baru) \otimes (u - \bar u)
\\
- \rho (u - \baru) \cdot \nabla \Big ( \frac{\delta \cE}{\delta \rho} (\rho) - \frac{\delta \cE}{\delta \rho} (\brho)  \Big ) - \zeta \rho |u - \baru|^2\, .
\end{aligned}
$$
Integrating over space leads to the balance of the  relative kinetic energy
\begin{equation}
\label{relkine}
\begin{aligned}
&\frac{d}{dt} \int \frac{1}{2} \rho |u - \baru|^2 \, dx + \zeta \int \rho |u - \baru|^2 \, dx 
\\
&\qquad = - \int \rho \nabla \baru : (u - \baru) \otimes (u - \bar u) \, dx 
+ \big \langle \frac{\delta \cE}{\delta \rho} (\rho) - \frac{\delta \cE}{\delta \rho} (\brho) , \dx\big( \rho (u - \baru) \big)\big \rangle  \, .
\end{aligned}
\end{equation}

\subsection{The functional form of the relative energy formula} \label{sec-funcrel-tote}
Combining \eqref{relpote} with \eqref{relkine} we obtain the equation for
the evolution of the (total) relative  energy
\begin{equation}
\label{reltote}
\begin{aligned}
&\frac{d}{dt} \left ( \cE (\rho | \brho) +  \int \tfrac{1}{2} \rho |u - \baru|^2 \, dx \right ) + \zeta \int  \rho |u - \baru|^2 \, dx
\\
&\qquad \qquad = \int  \nabla \baru : S (\rho | \brho )   \, dx  - \int \rho \nabla \baru : (u - \baru) \otimes (u - \bar u) \, dx \, ,
\end{aligned}
\end{equation}
where $\cE (\rho | \brho)$ and $S(\rho | \brho)$ stand for the relative potential energy and relative stress functionals 
defined in \eqref{relendef} and \eqref{relstress}, respectively.

An interesting feature of these calculations is how the contributions of the term
$$
D = \big \langle \frac{\delta \cE}{\delta \rho} (\rho) - \frac{\delta \cE}{\delta \rho} (\brho) , \dx \big ( \rho (u - \baru)\big) \big \rangle 
$$
in \eqref{relpote} and \eqref{relkine} offset each other, in complete analogy to the workings of the derivation
of the total energy \eqref{tote} from the potential \eqref{pote} and kinetic \eqref{kine} energies. The errors are again
formally quadratic in nature as in the corresponding calculations of \cite{Daf79} for the system of thermoelasticity,
but now functionals are involved in the final formulas.

While this abstract derivation has some elegance and ease of derivation, it has the drawback that it requires smoothness for the fields $(\rho, u)$ and
$(\brho, \baru)$. In applications the relative energy is often used to compare a weak to a strong solution; in such cases the calculation has to be rederived by alternate means
for solutions of limited smoothness.
Indeed, this is done for the Euler-Korteweg system in Section \ref{subsec:weakdissKort} and Appendix \ref{subsec:relenKort}. 

\medskip

The calculation is next applied to various specific examples: the Euler-Korteweg system, the Euler-Poisson system, and order parameter approximations
of the Euler-Korteweg system.

\subsection{The Euler-Korteweg system} \label{sec-EK}
The functional 
\begin{equation}
\label{kortfunc}
\cE (\rho) = \int F(\rho, \nabla \rho ) \, dx
\end{equation}
generated by the smooth function $F = F(\rho , q) : \R^+ \times \R^d \to \R$ is associated to 
the Korteweg theory of capillarity;  the dynamics of \eqref{funsys} or \eqref{wkfunsys} 
along the functional \eqref{kortfunc} generates the Euler-Korteweg system.

For the functional \eqref{kortfunc} we next explore the precise meaning of the formulas \eqref{funchyp1} and \eqref{funchyp2}. 
Note that, for a test function $\psi$, 
\begin{equation}
\label{firstvar}
\begin{aligned}
d\cE (\rho ; \psi ) &= \frac{d}{d\tau} \cE (\rho + \tau \psi ) \Big |_{\tau = 0} 
\\
&= \frac{d}{d\tau}  \Big |_{\tau = 0}  \int F(\rho + \tau \psi , \nabla \rho + \tau \nabla \psi) \, dx
\\
&= \int \Big( F_\rho (\rho, \nabla \rho) \psi + F_q (\rho, \nabla \rho) \cdot \nabla \psi \Big)\, dx
\\
&= : \big \langle \frac{\delta \cE}{\delta \rho }(\rho) , \psi  \big \rangle \, .
\end{aligned}
\end{equation}
This defines  the meaning of the bracket $\langle \cdot , \psi \rangle$ as the duality bracket between $H^{-1}$ and $H_0^1$ and identifies 
the variational derivative $\frac{\delta \cE}{\delta \rho}$ as
\begin{equation}
\frac{\delta \cE}{\delta \rho} (\rho)   = F_\rho (\rho , \nabla \rho)  - \nabla \cdot F_q (\rho , \nabla \rho)\, .
\end{equation}

Next, we compute
\begin{equation}
\label{secondvar}
\begin{aligned}
d^2 \cE (\rho;  \psi, \phi) 
&= \lim_{\eps \to 0}  
\frac{ \big \langle \frac{\delta \cE}{\delta \rho }(\rho + \eps \phi) , \psi  \big \rangle - \big \langle \frac{\delta \cE}{\delta \rho }(\rho) , \psi  \big \rangle }{\eps}
\\
&= \frac{d}{d\eps} \Big |_{\eps = 0} \int \Big (F_\rho (\rho + \eps \phi, \nabla\rho + \eps \nabla \phi ) \psi + F_q ( \rho + \eps \phi, \nabla\rho + \eps \nabla \phi ) \cdot \nabla \psi \Big )\, dx
\\
&= \int \Big (F_{\rho \rho} \, \phi \psi + \psi F_{\rho q} \cdot \nabla \phi + \phi  F_{ \rho q} \cdot \nabla \psi + F_{q q} : (\nabla _x\phi \otimes \nabla \psi) \Big )\, dx
\\
&= \int (\phi, \nabla \phi) \cdot  
\begin{pmatrix}  F_{\rho \rho} & F_{\rho q} \\ F_{\rho q} & F_{qq} \end{pmatrix} (\rho, \nabla \rho) 
\begin{pmatrix} \psi \\ \nabla \psi  \end{pmatrix} \, dx
\\
&= :  \left \langle  \left \langle \frac{\delta^2 \cE }{\delta \rho^2} (\rho) , ( \psi, \phi ) \right \rangle \right \rangle\, ,
\end{aligned}
\end{equation}
where the last equation defines the meaning of the bracket $\langle \langle \cdot , (\phi, \psi) \rangle \rangle$ and we have used the notations $F_q = \nabla_q F$
and $F_{qq} = \nabla^2_q F$. It is clear that $d^2 \cE (\rho;  \psi, \varphi) $ is a bilinear form and also that the convexity of $F$ implies that the second variation
of $\cE$ is positive.

Using \eqref{firstvar} (for the test function $\psi = \rho - \brho$) we compute the relative potential energy in \eqref{relendef}
and express it in the form
\begin{equation}
\label{relpotekort}
\begin{aligned}
\cE (\rho | \brho) &= \int \Big (F(\rho, \nabla \rho) - F(\brho, \nabla \brho) - F_\rho ( \brho, \nabla \brho) (\rho - \brho) - F_q ( \brho, \nabla \brho) \cdot (\nabla \rho - \nabla \brho)\Big) \, dx
\\
&= \int F (\rho, \nabla \rho \,  | \,  \brho , \nabla \brho ) \, dx \, ,
\end{aligned}
\end{equation}
where $F(\rho, q | \brho , \barq)$ stands for the quadratic Taylor polynomial of $F(\rho, q)$ around $(\brho, \barq)$.

The Euler-Korteweg system (with friction when $\zeta >0$)  of the form \eqref{funsys} generated by the Korteweg functional \eqref{kortfunc}
takes the form
\begin{equation}
\label{euler-kort}
\begin{aligned}
\frac{\del \rho}{\del t} + \dx (\rho u) &= 0 
\\
\rho \Big [ \frac{\del u}{\del t} +  (u \cdot \nabla) u \Big ]  &= - \rho \nabla  \big ( F_\rho - \nabla \cdot  F_q \big )  - \zeta \rho u\, .
\end{aligned}
\end{equation}

By direct computation, one checks the formula
\begin{equation}
\label{formula}
\begin{aligned}
-\rho \frac{\del}{\del x_i} \frac{\delta \cE}{\delta \rho} 
&= -\rho \frac{\del}{\del x_i}  \big ( F_\rho - \frac{\del}{\del x_k } F_{q_k} \big )
\\
&=   - \frac{\del}{\del x_i}  \big ( \rho F_\rho -  \rho \frac{\del}{\del x_k } F_{q_k} \big ) + \frac{\del \rho}{\del x_i} F_\rho + \frac{\del^2 \rho}{\del x_i \del x_k}  F_{q_k}  
-  \frac{\del}{\del x_k} \big ( \frac{\del \rho}{\del x_i} F_{q_k} \big ) 
\\
&= \frac{\del}{\del x_j} \Big (  \big (  F - \rho F_\rho + \rho \frac{\del}{\del x_k} F_{q_k} \big  )  \, \delta_{i j} - \frac{\del \rho}{\del x_i} F_{q_j} \Big )
\\
&=  \frac{\del}{\del x_j} S_{ij}\, ,
\end{aligned}
\end{equation}
where the Korteweg stress tensor is defined by
\begin{equation}
\label{kortstres}
\begin{aligned}
S &=   ( F - \rho F_\rho  + \rho \dx F_q  )  \mathbb{I} -  \nabla \rho \otimes F_q  
\\[4pt]
&= \Big [  ( F - \rho F_\rho - (\nabla \rho)  \cdot F_q) + \dx ( \rho F_q) \Big ]  \mathbb{I} - \nabla \rho \otimes F_q \, ,
\end{aligned}
\end{equation}
with $\mathbb{I}$ the identity matrix and $P = \rho F_\rho - F$ the pressure. 
For the stress tensor to be symmetric it is often required that $F_q(rho, \nabla \rho) = a(\rho, \nabla \rho) \nabla \rho$
with  $a$ a scalar valued function.
The following  expression of $S(\rho)$ will be convenient for expressing the relative stress:
\begin{equation}
\label{stressreexp}
\begin{aligned}
S_{i j} (\rho) &=   \big ( F - \rho F_\rho -  \frac{\del \rho }{\del x_k} F_{q_k}  \big ) \, \delta_{i j}  +  \frac{\del}{\del x_k} \big ( \rho  F_{q_k}  \big ) \, \delta_{i j}  
- \frac{\del \rho}{\del x_i} F_{q_j}
\\
&= - s (\rho, \nabla \rho) \delta_{i j} +  \Big ( \frac{\del}{\del x_k} r_k (\rho, \nabla \rho) \Big )  \delta_{i j} - H_{i j} (\rho, \nabla \rho)\ , 
\end{aligned}
\end{equation}
where the  functions $s : \R^+ \times \R^d \to \R$, $r : \R^+ \times \R^d \to \R^d$ and $H : \R^+ \times \R^d \to \R^{d \times d}$ are defined by
\begin{equation}
\label{stressdeffunc}
\begin{aligned}
s (\rho, q) &= \rho F_\rho (\rho, q)  + q \cdot F_q (\rho, q)   - F (\rho, q)\, ,
\\
r (\rho, q) &= \rho F_q (\rho, q)\, ,
\\
H(\rho , q) &= q \otimes F_q (\rho, q) \, .
\end{aligned}
\end{equation}

We  proceed to compute the relative stress tensor defined in 
\eqref{relstress}. $S(\rho)$ is viewed as a functional and 
using \eqref{stressreexp} we compute, for a test function $\phi$, the first variation of $S_{ij}(\rho)$ via the formula
\begin{align}
&\frac{d}{d\tau} \Bigg |_{\tau =0} S_{ij} (\rho + \tau \phi) 
\nonumber
\\
&\quad =
- ( s_\rho \phi +  s_q \cdot \nabla \phi )  \delta_{i j} 
+ \frac{\del}{\del x_k} \Big ( \frac{ \del r_k}{\del \rho} \phi + \frac{\del r_k}{\del q_l} \frac{\del \phi}{\del x_l} \Big ) \delta_{i j} 
- \Big ( \frac{\del H_{i j} }{\del \rho} \phi + \frac{\del H_{i j} }{\del q_l } \frac{\del \phi}{\del x_l} \Big )
\nonumber
\\
&\quad = \langle \frac{\delta S_{i j} }{\delta \rho} , \phi \rangle\, .
\label{stressvarkort}
\end{align}
Note that \eqref{stressvarkort} gives a meaning to the bracket defining the first variation of the Korteweg
stress tensor.

In turn, using \eqref{stressreexp},  \eqref{relstress} and \eqref{stressvarkort} 
(with $\rho \to \brho$ and for the choice of  $\phi = \rho - \brho$)  we obtain an expression for  the relative stress  Korteweg tensor:
\begin{equation}
\label{relstresskort}
\begin{aligned}
S_{ij} ( \rho | \brho)  &= S_{i j} (\rho) - S_{ij}  (\brho) - \big \langle \frac{\delta S_{ij} }{\delta \rho} (\brho) , \rho - \brho \big \rangle
\\
&= - s ( \rho, \nabla \rho | \brho , \nabla \brho ) \delta_{i j}  + \frac{\del}{\del x_k} \Big ( r_k ( \rho, \nabla \rho | \brho , \nabla \brho ) \Big ) \delta_{i j} 
-  H_{i j}  ( \rho, \nabla \rho | \brho , \nabla \brho )\, ,
\end{aligned}
\end{equation}
where the notation
\begin{equation}
\label{defrels}
s (\rho, q | \brho , \bar q) = s (\rho, q) - s(\brho, \bar q) - s_\rho (\brho, \bar q) (\rho - \brho) - s_q (\brho, \bar q) \cdot (q - \bar q)
\end{equation}
denotes the relative $s$ function: the difference between $s(\rho, q)$ and  the linear part of its Taylor expansion around $(\bar\rho, \bar q)$,
and similarly for the relative functions $r (\rho, q | \bar\rho, \bar q)$ and $H(\rho, q | \bar\rho, \bar q)$.

We conclude by stating the relative energy formula induced by \eqref{reltote} for the specific case of the Euler-Korteweg system \eqref{euler-kort}.
Using \eqref{relpotekort} and \eqref{relstresskort}, we end up with
\begin{equation}
\label{reltotekort}
\begin{aligned}
&\frac{d}{dt} \left (  \int  F (\rho, \nabla \rho \,  | \,  \brho , \nabla \brho ) + \tfrac{1}{2} \rho |u - \baru|^2 \, dx \right ) + \zeta \int  \rho |u - \baru|^2 \, dx
\\
&\quad =  -  \int  \Big [  (\div \baru)  \, s ( \rho, \nabla \rho | \brho , \nabla \brho ) + \Big ( \frac{\del}{\del x_k} \div \baru \Big ) r_k (\rho, \nabla \rho | \brho , \nabla \brho )
\\
&\qquad \qquad \qquad \qquad \qquad \qquad \qquad 
+  \nabla \baru : H( (\rho, \nabla \rho | \brho , \nabla \brho ) \Big ] \, dx
\\
&\quad \quad   - \int \rho \nabla \baru : (u - \baru) \otimes (u - \bar u) \, dx \, .
\end{aligned}
\end{equation}

\medskip
\noindent
Special choices of the energy functional \eqref{kortfunc} lead to some frequently 
occurring  systems in fluid dynamics. Some of them are reviewed below:

\subsubsection{The Euler system of isentropic gas flow}  The choice
\begin{align}
\label{gasdynfunc}
\cE(\rho) = &\int h(\rho) \, dx \,  , \quad S = - p(\rho) \mathbb{I}  \, , 
\\
&p(\rho) = \rho h'(\rho) - h(\rho)\, ,
\label{Gibbs-Duhem}
\end{align}
produces the Euler system of compressible isentropic gas dynamics. The resulting relative energy formula coincides with the one computed in
\cite{Daf79b} and \cite{LT13}.

\subsubsection{Special instances of the Euler-Korteweg system}
An often used functional, within the framework of the Korteweg theory, is
\begin{equation}
\label{specialkort}
\cE(\rho) = \int \Big (h(\rho) + \tfrac{1}{2} \kappa (\rho) |\nabla \rho |^2 \Big )  \, dx \, .
\end{equation}
The formulas \eqref{formula}  and  \eqref{stressreexp}  now become
\begin{align}
\label{specialformula}
&- \rho \nabla \Big ( h'(\rho) + \tfrac{1}{2} \kappa'(\rho) |\nabla \rho|^2 - \dx (\kappa(\rho) \nabla \rho) \Big )  = \nabla \cdot S\ , 
\\
\label{specialkortstress}
S &=  \Big [  - p(\rho)  - \tfrac{1}{2} \big ( \rho \kappa'(\rho)  + \kappa(\rho) \big ) |\nabla \rho|^2   + \nabla \cdot \big (\rho \kappa (\rho) \nabla \rho \big )  \Big ] \mathbb{I} - \kappa (\rho) \nabla \rho \otimes \nabla \rho \, ,
\end{align}
and the system \eqref{wkfunsys} takes the form
\begin{equation}
\label{specialkortsys}
\begin{aligned}
\frac{\del \rho}{\del t} + \dx (\rho u) &= 0 
\\
\frac{\del}{\del t}( \rho u) + \dx (\rho u \otimes u) &=  \nabla \Big (  - p(\rho)  -  \tfrac{1}{2}(\rho \kappa'(\rho)  + \kappa(\rho) ) |\nabla \rho|^2  + \div  \big ( \rho \kappa(\rho) \nabla \rho \big )     \Big ) 
\\
&\quad - \div (\kappa(\rho) \nabla \rho \otimes \nabla \rho )  - \zeta \rho u\, ,
\end{aligned}
\end{equation}  
where $p = \rho h' - h$. As shown later in Lemma \ref{lem:Funifconvex} the convexity of the functional \eqref{specialkort} is equivalent to the hypotheses
\begin{equation*}
h''(\rho) = \frac{p'( \rho)}{\rho} > 0 \, , \quad   \kappa(\rho) > 0 \, , \quad  \kappa(\rho) \kappa'' (\rho) - 2( \kappa'(\rho) )^2 \geq  0\, .
\end{equation*}

\subsubsection{The Navier-Stokes-Korteweg system}
The Euler-Korteweg model can be augmented by viscosity leading to the
 isothermal Navier-Stokes-Korteweg (NSK) system which is a well-known model for compressible liquid-vapor flows undergoing phase transitions.
It is a so-called diffuse interface model in which the fields are not discontinuous at the phase boundary, but undergo change smoothly from states in the one phase to states 
in the other, though usually steep gradients do occur. 
For the choice $\kappa(\rho)=C_\kappa=const.$\ the NSK model reads
\begin{equation}\label{NSK-sec}
 \begin{split}
   \rho_t + \dx(\rho u) &=0\\
   (\rho  u)_t + \dx (\rho  u \otimes  u) + \nabla p(\rho) &= \dx(\sigma[ u]) + C_{\kappa} \rho \nabla \triangle \rho\, ,
 \end{split}
\end{equation}
where 
\begin{equation}
\label{def:nss-sec}
\sigma[u]:= \lambda \dx( u) \mathbb{I}+ \mu (\nabla u + (\nabla u)^T)
\end{equation}
is the Navier-Stokes stress with coefficients $\lambda, \mu$ satisfying $\mu\geq0$ and $\lambda + \frac{2}{d} \mu \geq 0$.
Note that following \eqref{kortstres}  the third order term in the momentum balance can be written in divergence form.
The potential energy for NSK is the same as for Euler-Korteweg.
Adding in viscosity introduces a dissipative mechanism which actually helps with our relative energy calculations and which increases regularity of solutions.

The first order part of \eqref{NSK-sec} is hyperbolic when $p$ is monotone.
 The non-monotone pressure makes the first order part of \eqref{NSK-sec}
 a system of mixed hyperbolic-elliptic type.
It should be emphasized that
for \eqref{NSK-sec} to describe multi-phase flows it is mandatory that the energy density $h=h(\rho),$ 
related to the pressure by \eqref{Gibbs-Duhem},  is non-convex, which makes the pressure non-monotone.
We refer to \cite{DS85} for a discussion of the thermodynamic structure of \eqref{NSK-sec} and its relation to higher-order gradient theories.
Also, to \cite{B10,BDDJ05} for a discussion of the general structure of Euler-Korteweg and Navier-Stokes-Korteweg models.

\subsubsection{The Quantum Hydrodynamics system}\label{subsec:quantum}
 Another special case arises when in  \eqref{specialkort} we set $\kappa(\rho) = \tfrac{1}{4} \frac{\eps^2 }{\rho}$,
where $\eps$ is a constant (the Planck constant). This leads to the energy
\begin{equation}
\label{qhdenergy}
\cE(\rho) = \int \Big (h(\rho) + \frac{1}{8} \eps^2 \frac{1}{\rho}  |\nabla \rho |^2\Big )  \, dx  = \int \Big (h(\rho) +  \frac{1}{2} \eps^2 | \nabla \sqrt{\rho} |^2 \Big )\, dx\, .
\end{equation}
In that case we have the identities
\begin{equation}
\begin{aligned}
\frac{1}{2} \rho \nabla \Big ( \frac{1}{\sqrt{\rho}} \triangle \sqrt{\rho} \Big ) 
&\quad \; \; = \quad \; \;  \rho \nabla \Big ( \frac{1}{8 \rho^2} |\nabla \rho |^2 + \dx \big ( \frac{1}{4\rho} \nabla \rho \big )  \Big )
\\
&\stackrel{(\ref{specialformula}) , (\ref{specialkortstress})}{=} \nabla \cdot \Big ( \frac{1}{4} \triangle \rho\, \mathbb{I} - \frac{1}{4 \rho} \nabla \rho \otimes \nabla \rho \Big )
\end{aligned}
\end{equation}
and \eqref{specialkortsys} becomes (for $\zeta = 0$) the quantum hydrodynamics system (QHD) 
\begin{equation}
\label{qhd}
\begin{aligned}
\frac{\del \rho}{\del t} + \dx (\rho u) &= 0 \, ,
\\
 \frac{\del  \rho u}{\del t} +  \dx ( \rho u \otimes u ) + \nabla p(\rho)  &=  \tfrac{\eps^2}{2} \rho \nabla \Bigg ( \frac{\triangle \sqrt{\rho} }{\sqrt{\rho}} \Bigg )\, .
\end{aligned}
\end{equation}
We refer to \cite{AM09,AM12} for details on the existence theory of finite energy weak solutions of the QHD system and detailed references on this interesting subject.

\subsection{The Euler-Poisson system} \label{sec-EP}
As a second application we consider the Euler-Poisson system,
\begin{equation}
\label{euler-poisson}
\begin{aligned}
\frac{\del \rho}{\del t} + \dx (\rho u) &= 0 \, ,
\\
\rho \Big ( \frac{\del u}{\del t} +  (u \cdot \nabla) u \Big )  &= - \nabla p(\rho) + \rho \nabla c\, ,
\\[2pt]
- \triangle c + \beta c &= \rho \;  - < \rho >\, ,
\end{aligned}
\end{equation}
which is often used for describing charged gases in semiconductor devices or gases moving under the influence of a gravitational field. 
Here, $p(\rho)$ is the pressure of the gas of charged
particles and $\nabla c$ is the electrostatic force induced by the charged particles. The constant $\beta \ge 0$ is often referred as screening
constant while $<\rho> = \int \rho dx$ stands for the average charge in the torus $\T^d$ or the total charge in  $\R^d$.
Here, we restrict on the case of the torus $\T^d$ and leave to the reader to provide the necessary modifications for $\R^d$.
Also, we restrict to the frictionless case ($\zeta = 0$), it is straightforward to adapt and include frictional effects.
Our objective is to recast \eqref{euler-poisson} under the framework of \eqref{funsys}; as a byproduct we will infer a relative energy
calculation.

Consider the functional
\begin{equation}
\label{epfunc1}
\begin{aligned}
\cE (\rho) = \int \Big(h(\rho) - \tfrac{1}{2} \rho c \Big)\, dx\, ,&
\\
\mbox{where $c$ is the solution of} \qquad - \triangle c + \beta c &= \rho \;  - < \rho >\, .
\end{aligned}
\end{equation}
The elliptic equation \eqref{epfunc1}$_2$ is solvable on $\T^d$. It has a unique solution
for $\beta >0$ while the solution is given in terms of an arbitrary constant for $\beta = 0$. This constant plays no role in determining the electrostatic force $\nabla c$
and might be fixed by requiring $\int c = 0$ for $\beta = 0$. One may express the solution operator in terms of the Green's function
$$
c (x) = (\mathcal{K} \ast \rho) (x) = \int \mathcal{K}( x - y) \rho (y) \, dy\, ,
$$
where  $\mathcal{K}$ is a symmetric function, and $\cE$ is expressed in the form
\begin{equation}
\label{epfunc}
\cE (\rho) = \int \Big ( h(\rho) - \tfrac{1}{2} \rho ( \mathcal{K} \ast \rho)\Big) \, dx\, .
\end{equation}
The directional derivative of $\cE$ is computed by using the symmetry of $\mathcal{K}$, 
\begin{equation}
\label{firstvarep}
\begin{aligned}
d\cE (\rho ; \psi ) 
&= \frac{d}{d\tau} \cE (\rho + \tau \psi ) \Big |_{\tau = 0} 
\\
&= \int \Big (h' (\rho) \psi - \tfrac{1}{2}  \psi (\mathcal{K} \ast \rho )  -   \tfrac{1}{2} \rho (\mathcal{K} \ast \psi)\Big ) \, dx
\\
&= \int (h' (\rho) - \mathcal{K} \ast \rho) \psi \, dx
\\
&= \big \langle \frac{\delta \cE}{\delta \rho }(\rho) , \psi  \big \rangle\, .
\end{aligned}
\end{equation}
Accordingly,  for $p (\rho)$ satisfying \eqref{Gibbs-Duhem},  
the Euler-Poisson system is expressed as a
Hamiltonian flow  \eqref{funsys} for the energy functional \eqref{epfunc}.

Next, we prove 
\begin{align}
- \rho \nabla \frac{\delta \cE}{\delta \rho }(\rho) &= - \rho \nabla  ( h'(\rho) - c) = \div S\, ,
\label{hamflowep}
\end{align}
where
\begin{align}
S &= - \big ( p(\rho) -\tfrac{1}{2} |\nabla c|^2 - \tfrac{\beta}{2} c^2 - < \rho > c \big ) \mathbb{I} - \nabla c \otimes \nabla c \, .
\label{stressep}
\end{align}
Note that \eqref{hamflowep} validates the hypothesis \eqref{stresshyp} and its weak form \eqref{varder1} for the case of the Euler-Poisson system with the stress functional
$S(\rho)$ defined by \eqref{stressep}. To prove \eqref{hamflowep}, we multiply 
\eqref{euler-poisson}$_3$ by $\nabla c$ then after some rearrangement of terms we obtain
$$
\rho \nabla c = \nabla \Big ( \tfrac{1}{2} |\nabla c|^2 + \tfrac{\beta}{2} c^2 + <\rho> c \Big ) - \dx ( \nabla c \otimes \nabla c )\, ,
$$
which readily provides, for $p' = \rho h''$, the formula
$$
- \rho \nabla  ( h'(\rho) - c) = \div \Big ( - \big [ p(\rho) -  \tfrac{1}{2} |\nabla c|^2 - \tfrac{\beta}{2} c^2 - <\rho> c \big ] \mathbb{I} - \nabla c \otimes \nabla c \Big )
$$
and expresses the stress in the form \eqref{stressep}.

Suppose now that  $(\rho, u)$ with $c = \mathcal{K} \ast \rho$ and $(\brho, \baru)$ with $\barc = \mathcal{K} \ast \brho$ are two solutions of the Euler-Poisson system.
Our goal is to identify the form that the abstract relative entropy identity takes for the specific case of the Euler-Poisson functional \eqref{epfunc}.
First, using \eqref{relendef}, \eqref{epfunc} and \eqref{firstvarep} (for $\psi = \rho - \brho$) we compute
$$
\begin{aligned}
\cE (\rho | \brho) = \int \Big ( h(\rho) - \tfrac{1}{2} \rho c - h(\brho) + \tfrac{1}{2} \brho \barc - ( h' (\brho) - \bar c) (\rho - \brho)\Big ) \, dx\, .
\end{aligned}
$$
Next, use the formulas
$$
\begin{aligned}
h(\rho | \brho) &:= h(\rho) - h(\brho) - h' (\brho)  (\rho - \brho)\, ,
\\
\int c \brho &= \int \bar c \rho\, , \qquad \mbox{since $\mathcal{K}$ is symmetric and $c = \mathcal{K} \ast \rho$, $\barc = \mathcal{K} \ast \brho$}\, ,
\end{aligned}
$$
to recast $\cE (\rho | \brho)$ in its final form
\begin{equation}
\label{relpoteep}
\cE (\rho | \brho) = \int \Big ( h(\rho | \brho) - \tfrac{1}{2} (\rho - \brho) \mathcal{K} \ast (\rho - \brho) \Big) \, dx\, .
\end{equation}
Note that for the case considered here, of an electrically attracting fluid, the relative energy consists of two competing terms, and to exploit
the relative energy $\cE (\rho | \brho)$ additional considerations will be needed. These are undertaken in a companion work \cite{LTforth}.

The relative stress functional $S ( \rho | \brho)$  is computed using \eqref{relstress}  for the Euler-Poisson case.  First, we compute
the directional derivative of the stress functional \eqref{stressep}: for $\psi$ a test function,  $c= \mathcal{K} \ast \rho$ and $C = \mathcal{K} \ast \psi$, the variation of $S(\rho)$ is
given by
\begin{equation}
\label{firstvarstressep}
\begin{aligned}
&d S [\rho, \psi] = \frac{d}{d\tau} \Bigg |_{\tau = 0} S(\rho + \eps \psi )
\\
&= \frac{d}{d\tau} \Bigg |_{\tau = 0} \Big (  \big [- p(\rho + \tau \psi) + \tfrac{1}{2} |\nabla c + \tau \nabla C |^2  + \tfrac{\beta}{2} (c + \tau C)^2 + <\rho + \tau \psi> (c + \tau C)  \big ] \mathbb{I}
\\
&\qquad \qquad - (\nabla c + \tau \nabla C) \otimes (\nabla c + \tau \nabla C) \Big )
\\
&= \big [ -p'(\rho) \psi + \nabla (\mathcal{K}\ast \rho) \cdot \nabla (\mathcal{K}\ast \psi) + \beta (\mathcal{K}\ast \rho) (\mathcal{K} \ast \psi) + <\psi> \mathcal{K}\ast \rho + <\rho> \mathcal{K}\ast \psi \big ] \mathbb{I}
\\
&\qquad - \nabla (\mathcal{K} \ast \psi) \otimes \nabla (\mathcal{K} \ast \rho) - \nabla (\mathcal{K} \ast \rho) \otimes \nabla (\mathcal{K} \ast \psi)
\\
&=: \langle \frac{\delta S}{\delta \rho} (\rho) , \psi \rangle\, .
\end{aligned}
\end{equation}
Using \eqref{stressep} and \eqref{firstvarstressep} in \eqref{relstress} we obtain, after rearranging the terms
\begin{equation}
\label{relstressep}
\begin{aligned}
&S(\rho | \brho) = S(\rho ) - S(\brho) - \big \langle \frac{\delta S}{\delta \rho} (\brho) , \rho - \brho \big \rangle
\\
&= \big ( - p(\rho | \brho) + \tfrac{1}{2} |\nabla (c-\barc)|^2 + \tfrac{\beta}{2} (c - \barc)^2 + <\rho - \brho>(c-\barc) \big ) \mathbb{I}
- \nabla (c - \barc) \otimes \nabla (c - \barc)\, .
\end{aligned}
\end{equation}

Using \eqref{relpoteep}, \eqref{relstressep}   we express the relative energy identity \eqref{reltote} for the Euler-Poisson system,
\begin{equation}
\label{reltoteep}
\begin{aligned}
&\frac{d}{dt} \left (  \int \Big (h(\rho | \brho) - \tfrac{1}{2} (\rho - \brho) \mathcal{K} \ast (\rho - \brho)  + \tfrac{1}{2} \rho |u - \baru|^2 \Big) \, dx \right ) 
\\
&\qquad =  \int  \div \baru  \Big ( - p(\rho | \brho) + \tfrac{1}{2} |\nabla (c-\barc)|^2 + \tfrac{\beta}{2} (c - \barc)^2 +  <\rho - \brho>(c-\barc) \Big ) \, dx
\\
&\qquad \qquad - \int  \nabla \baru :  \nabla (c - \barc) \otimes \nabla (c - \barc)  \, dx  - \int \rho \nabla \baru : (u - \baru) \otimes (u - \bar u) \, dx  \, .
\end{aligned}
\end{equation}

\bigskip
\subsection{Order parameter approximation of the Euler-Korteweg system}\label{sec-LOA}
Another example of a system fitting into our framework is the following model introduced in \cite{Roh10}:
\begin{equation}\label{lon}
 \begin{split}
\frac{\partial}{\partial t}   \rho + \div(\rho u) &=0\\
\frac{\partial}{\partial t}   (\rho u) + \div (\rho u \otimes u) + \nabla (p(\rho) + C_\kappa \frac{\alpha}{2} \rho^2)&=
  C_\kappa\alpha \rho \nabla  c\\
  c - \frac{1}{\alpha}\triangle c &= \rho\, ,
 \end{split}
\end{equation}
where  $C_\kappa$ is as in \eqref{NSK-sec}, and $\alpha>0$ is a parameter.
We will show in Section \ref{sec:mc} that for $\alpha \rightarrow \infty$ classical solutions of \eqref{lon} endowed with viscosity converge to solutions of \eqref{NSK-sec}.
The motivation for introducing \eqref{lon}  in \cite{Roh10} was due to numerical considerations.
We will comment on this briefly at the beginning of Section \ref{sec:mc}. 
It should be noted that similar models were derived in \cite{BLR95,RT00} as models in their own right without reference to NSK.

In \cite{Roh10} the potential energy
\begin{equation}\label{loe1}\mathcal{E}(\rho,c,\nabla c):= \int\Big( h(\rho) + \frac{C_\kappa \alpha}{2} (\rho - c )^2 + \frac{C_\kappa}{2}|\nabla c|^2\Big)\, dx
\end{equation}
with $p(\rho)=\rho h'(\rho)- h(\rho)$
was considered. It is important to note that in this sense $c$ is a variable which is independent of $\rho$
and without an immediate physical interpretation.
Using \eqref{loe1} we may write \eqref{lon} as
\begin{equation}\label{lo2}
 \begin{split}
   \rho_t + \div(\rho u) &=0\\
  \rho \frac{\operatorname{D} u}{\operatorname{D} t} &= - \rho \nabla \frac{\delta \mathcal{E}}{\delta \rho}\\
   \frac{\delta \mathcal{E}}{\delta c} &= 0\, .
 \end{split}
\end{equation}

However, if we understand \eqref{lon}$_3$ not as an energy minimization condition but as the definition of $c, $ 
which coincides with the definition in Section \ref{sec-EP} up to an additive constant and replacing $\beta$ by $\alpha^{-1}$,
we may rewrite the energy as follows:
\begin{equation}\begin{split}\label{loe2}
   &\mathcal{E}(\rho):= \int\Big( h(\rho) + \frac{C_\kappa \alpha}{2} \rho ^2 - \frac{C_\kappa \alpha}{2} \rho c\Big)\, dx\\
   &\text{with $c$ solving}\quad  c - \frac{1}{\alpha}\triangle c = \rho\, ,
  \end{split}
 \end{equation}
 since \eqref{lon}$_3$ implies
 \begin{multline} \int\Big( \rho^2 -  \rho c \Big)\, dx
  =\int  \Big((\rho-c) (\rho -c) + c(\rho - c)\Big)\, dx \\
  = \int \Big( (\rho-c)^2 + c (- \frac{1}{\alpha} \triangle c)\Big)
  \, dx
   = \int \Big( (\rho - c )^2 + \frac{1}{2\alpha}|\nabla c|^2
   \Big)\, dx.
 \end{multline}
Thus, we may recast \eqref{lon} as an Euler-Poisson system with modified energy $h.$
The variation of \eqref{loe2} and derivation of the stress tensor are completely analogous to Section \ref{sec-EP}.

Finally, we like to point out another (equivalent) version of the potential energy \eqref{loe1}:
\begin{equation}\begin{split}\label{loe3}
   &\mathcal{E}(\rho)= \int\Big( h(\rho) + \frac{C_\kappa}{2} \nabla \rho \cdot \nabla c\Big) \, dx\\
   &\text{with $c$ solving}\quad  c - \frac{1}{\alpha}\triangle c = \rho\, .
  \end{split}
 \end{equation}
This is based on
 \[ \alpha \int  (\rho^2 - \rho c)\, dx= \int \Big( \rho(- \triangle c)\Big)\, dx
 = \int \Big(\nabla \rho \cdot \nabla c\Big)\, dx\, .\]
 In \eqref{loe3} it is apparent that $\nabla c$ converges to $\nabla \rho$, at least for sufficiently smooth $\rho,$ 
 and, thus, the energy of the model at hand converges to the energy of the Euler-Korteweg model.
Indeed, it was shown in \cite{SV03} that the energy functional of this model $\Gamma$-converges to that of the Euler-Korteweg model for $\alpha \rightarrow \infty.$

\subsection{Remarks on the Hamiltonian structure of the problem}\label{sec-hamrem}
Here we consider the system \eqref{funsys-intro} and briefly outline an idea, adapted from \cite{BDDJ05}, on the relation of 
\eqref{funsys-intro} and Hamiltonian systems.  The reader is refered to \cite[Sec 1.5, 1.6]{MR99} and \cite[\S 30.5]{DFN85}
for various approaches towards the Hamiltonian structure of the (incompressible or compressible) Euler equations and related systems.

Consider the case of three space dimension, $d=3$, and using the vector calculus formula
$$
(u \cdot \nabla) u = \frac{1}{2} \nabla |u|^2 - u \times {\curl}_x  u
$$
rewrite \eqref{funsys-intro} in the form
\begin{equation}
\label{funsys-mod}
\begin{aligned}
\frac{\del \rho}{\del t} &= -  \nabla \cdot  (\rho u) 
\\
 \frac{\del u}{\del t} &=  -  \nabla \Big ( \tfrac{1}{2} |u|^2 +  \frac{\delta \cE}{\delta \rho} \Big )  + u \times {\curl}_x u\, .
\end{aligned}
\end{equation}

Define the Hamiltonian
\begin{equation}
\label{hamdef}
\cH (\rho, u) = \cE (\rho) + \hat K (\rho, u)  = \cE(\rho) + \int \frac{1}{2} \rho |u|^2 \, dx\, .
\end{equation}
Note that the kinetic energy $\hat K$ is viewed here as a functional on $(\rho, u)$; this should be contrasted to \eqref{kinedef}.
Then we easily compute the variational derivatives
$$
\frac{\delta \cH}{\delta \rho} = \frac{\delta \cE}{\delta \rho} + \frac{1}{2} |u|^2 \, , \quad
\frac{\delta \cH}{\delta u} = \rho u
$$
and rewrite \eqref{funsys-mod} as
\begin{equation}
\label{hamiltonianform}
\frac{\del}{\del t} \begin{pmatrix} \rho \\[2pt] u  \end{pmatrix} = 
\begin{pmatrix}  0 &  - \div  \\[2pt]  - \nabla  & 0  \end{pmatrix} \begin{pmatrix}  \frac{\delta \cH}{\delta \rho}  \\[4pt] \frac{\delta \cH}{\delta u} \end{pmatrix} 
+ 
\begin{pmatrix} 0  \\[4pt]  \frac{1}{\rho} \frac{\delta \cH}{\delta u} \times {\curl}_x  ( \frac{1}{\rho}\frac{\delta \cH}{\delta u}) \end{pmatrix}  \, .
\end{equation}
The operator
$$
J = \begin{pmatrix}  0 & - \div  \\[2pt]  - \nabla  & 0  \end{pmatrix}
$$
is skew adjoint and the system \eqref{hamiltonianform}  is Hamiltonian whenever ${\curl}_x u = 0$, but it has an additional term
when $\curl u \ne 0$.  This additional term does not affect the conservation of energy as can be seen starting from \eqref{hamiltonianform} via the following
formal calculation
\begin{equation}
\begin{aligned}
\frac{d}{dt} \cH (\rho, u) &= \big \langle \frac{\delta \cH }{\delta \rho} , \rho_t \big \rangle + \big \langle \frac{\delta \cH}{\delta u} , u_t \big \rangle 
\\
&= - \Big \langle \frac{\delta \cH }{\delta \rho} , \div   \frac{\delta \cH}{\delta u}   \Big \rangle  - \Big \langle \frac{\delta \cH }{\delta u} , \nabla   \frac{\delta \cH}{\delta \rho}   \Big \rangle 
\\
&= 0 \, .
\end{aligned}
\end{equation}
We note that the kinetic energy functional $ \hat K (\rho, u)$ is not convex as a functional in $(\rho, u)$, but it becomes convex when viewed as a functional in
the variables $(\rho, m)$ in \eqref{kinedef}.

%
%


\section{Weak-strong stability for the Euler-Korteweg system}
\label{sec:kort}

In this section we consider  the  Euler-Korteweg system 
\begin{equation}
    \begin{cases}
	\displaystyle{\rho_{t} +\dx (\rho u) =0}&   \\
	& \\
       \displaystyle{ (\rho u)_{t}} + \dx (\rho u \otimes u)
	=  - \rho \nabla \Big ( F_\rho(\rho, \nabla \rho)  - \dx F_q (\rho, \nabla \rho)   \Big )\, , &  \\
    \end{cases}
    \label{eq:Kortcap2}
\end{equation}
where $\rho \geq 0$ is the density, $u$ the velocity, and $F(\rho, q)$ a smooth function standing for the potential energy.
As noted in \eqref{formula} 
\begin{align}
\label{formulakorteweg}
- \rho \nabla \big ( F_\rho - \dx F_q     \big )  = \dx S\, ,  
\end{align}
where 
\begin{equation}
\label{stresskort}
\begin{aligned}
 S 
 &= \Big [ (F - \rho F_\rho - \nabla \rho \cdot F_q ) + \dx \big ( \rho  F_q \big )  \Big ]  \mathbb{I} - \nabla \rho \otimes F_q 
 \\
 &= - s (\rho, \nabla \rho) \mathbb{I}  +  \big ( \dx  r (\rho, \nabla \rho) \big )  \mathbb{I} - H (\rho, \nabla \rho)\ , 
 \end{aligned}
\end{equation}
is the stress tensor of the Korteweg fluid, and the functions $s$, $r$ and $H$ are given in \eqref{stressdeffunc}.
The formula \eqref{formulakorteweg}  enables to express the Euler-Korteweg system in conservative form,
\begin{equation}
\label{Euler-Kort}
\begin{aligned}
\del_t \rho + \dx m &= 0\, ,   
\\
\del_t m + \dx \Big ( \frac{m \otimes m}{\rho} \Big ) &= \dx S\, ,	   
\end{aligned}
\end{equation}
with $m=\rho u$ the momentum variable, and to define weak solutions (see Definition \ref{def:wksol} below).

We proceed along the lines of  \eqref{pote}, \eqref{kine} and \eqref{tote} for the case of \eqref{eq:Kortcap2}. A formal  computation
shows that the potential energy satisfies the equation
$$
\del_t F(\rho, \nabla \rho) + \dx \Big ( m (  F_\rho   - \dx  F_q)  - F_q   \rho_t \Big ) = m \cdot \nabla (  F_\rho   - \dx  F_q   ) \, ,
$$
the kinetic energy satisfies the equation
$$
\del_t ( \tfrac{1}{2} \rho |u|^2 )  + \dx \big (  u \tfrac{1}{2} \rho |u|^2 \big ) = - \rho u \cdot \nabla (  F_\rho - \dx  F_q ) \, , 
$$
and adding the two leads to
\begin{align}
&\partial_t \left  (\frac{1}{2} \frac{|m|^2}{\rho} + F(\rho,\nabla\rho) \right )  \nonumber\\
   &\ + \dx  \left ( \frac{1}{2}m\frac{|m|^{2}}{\rho^{2}} + 
     m \Big( F_\rho(\rho,\nabla\rho)- \dx  \big(F_q(\rho,\nabla\rho)\big) \Big)    + F_q(\rho,\nabla\rho)\dx m 
          \right ) =  0 \, .
    \label{eq:entrKortGen2}
\end{align}


\subsection{Relative energy estimate for weak  solutions}\label{subsec:weakdissKort}
Next, we consider the Euler-Korteweg system \eqref{Euler-Kort} 
and proceed to compare a weak solution $(\rho, m)$ with a strong solution $(\brho, \barm)$ via a relative energy computation.
For simplicity, we focus on periodic solutions defined on $\T^d$  the $d$-dimensional torus.  
We recall:

\begin{definition}  \label{def:wksol}
(i) A function $( \rho,  m)$ with  $\rho \in C([0, \infty) ; L^1 ( \T^d ) )$, $m \in C  \big ( ( [0, \infty) ;  \big ( L^1(\T^d) \big )^d \big )$,
$\rho \ge 0$,  is a weak
solution of \eqref{Euler-Kort}, if $\frac{m \otimes m}{\rho}$, $S \in L^1_{loc}  \left ( [0, \infty) \times \T^d  \right )^{d \times d}$  and $(\rho, m)$ satisfy
\begin{equation}
\begin{aligned}
- \iint \rho \psi_t + m \cdot \nabla \psi dx d\tau  = \int \rho (x, 0) \psi (x, 0) dx \, ,  \qquad  \forall \psi \in C^1_c \left ( [0, \infty) ; C^1 (\T^d) \right )\, ;
\\
- \iint m \cdot \varphi_t + \frac{m \otimes m}{\rho} : \nabla \varphi - S : \nabla \varphi \,  dx dt = \int m(x,0) \cdot \varphi(x,0) dx  \, ,  \qquad \qquad 
\\
 \forall \varphi  \in C^1_c \left ( [0, \infty) ;  \big ( C^1 (\T^d)  \big )^d \right ) \, .
\end{aligned}
\end{equation}

\medskip\noindent
(ii) If,  in addition, $\frac{1}{2} \frac{|m|^2}{\rho} + F(\rho, \nabla \rho) \in C([0, \infty) ; L^1 ( \T^d ) )$ and it satisfies
\begin{equation}
 \label{eq:Kortcap2Integr}
 \begin{aligned}
  - \iint \left ( \frac{1}{2} \frac{|m |^2}{\rho} + F(\rho,\nabla\rho)  \right ) \dot\theta(t) 
 \le   \int \left ( \frac{1}{2} \frac{|m |^2}{\rho } + F(\rho,\nabla\rho) \right ) \Big |_{t=0}  \theta(0)dx \, ,
 \\
 \qquad     \text{for any non-negative}\ \theta  \in W^{1, \infty} [0, \infty) \ \mbox{compactly supported on $[0, \infty)$},
 \end{aligned}
\end{equation}
then $(\rho, m)$ is called a dissipative weak solution. 

 \medskip\noindent
(iii) By contrast,
if $\frac{1}{2} \frac{|m|^2}{\rho} + F(\rho, \nabla \rho) \in C([0, \infty) ; L^1 ( \T^d ) )$ and it satisfies \eqref{eq:Kortcap2Integr}
as an equality, then $(\rho, m)$  is called a conservative weak solution.
\end{definition}

There is no complete agreement about the nature of weak solutions for \eqref{eq:Kortcap2}, that is 
whether one should consider \emph{conservative} or \emph{dissipative} weak solutions.
It appears the appropriate definition might depend on the way the solutions emerge
(whether they emerge by a limiting viscosity mechanism or arise from the nonlinear Schroedinger equation as in the case of the QHD system). 
In any case, here we will allow for both eventualities.
 
\smallskip
We place the following assumptions:
\begin{itemize}
\item[\textbf{(H1)}] $( \rho,  m)$ is  a dissipative (or conservative) weak periodic solution of \eqref{eq:Kortcap2} with $\rho \ge 0$
in the sense of Definition \ref{def:wksol} which satisfies
\begin{align}
\sup_{t\in (0,T)} \int_{\T^d}  \rho  dx &\le K_1 < \infty\, ,
\label{hypCauchyK1}
\\
 \sup_{t\in (0,T)} \int_{\T^d}
 \frac{1}{2} \frac{|  m |^2}{ \rho} + F(  \rho,\nabla \rho)  \, dx
 &\le K_2 < \infty \, .
 \label{hypCauchyK2}
\end{align}
\item[\textbf{(H2)}] $(\bar \rho, \, \bar u)  : (0,T)\times \T^d \to \R^{d+1}$ is  a strong  conservative periodic solution 
of \eqref{eq:Kortcap2} with $\bar\rho\geq 0$ and $\bar m = \brho \baru$. 
The regularity ``strong''  refers to the requirement that $\bar \rho$, $\bar u$ and the derivatives 
$\frac{\del \brho}{\del t}$,  $\frac{\del^2 \brho}{\del t \del x_i }$,
$\frac{ \del^2{\brho}}{\del x_i \del x_j}$, $\frac{ \del^3{\brho}}{\del x_i \del x_j \del x_k}$ 
as well as $\frac{\del \baru_i}{\del t}$, 
$\frac{\del \baru_i}{\del x_j}$ and  $\frac{\del^2 \baru_i}{\del x_i \del x_j}$ are in  $L^\infty \big ( (0, T)\times \T^d \big )$.  
\end{itemize}

For \eqref{hypCauchyK2} to induce useful bounds a convexity condition is imposed on the function $F(\rho, q)$.
The same condition is also used to exploit the relative energy identity. Instead of specifying hypotheses on $F$, we will assume at this
stage that the weak solution enjoys the regularity
 \begin{equation}
 \label{smoothnessass}
 \rho \in C([0,T];L^1 (\T^d))\ \hbox{and}\ \nabla \rho \in C([0,T]; L^1(\T^d))
 \tag{{\bf A}}
 \end{equation}
and  proceed to establish the relative energy identity.

\begin{theorem}\label{theo:relenweak}
Assume that hypotheses {\rm \textbf{(H1)},  \textbf{(H2)}} and \eqref{smoothnessass} hold. Then, 
\begin{align}
\label{eq:RelEnKorGenFinalweak}
& \int_{\T^d} \left. \Big ( \frac{1}{2} \rho \Big | \frac{m}{\rho} -  \frac{\bar m}{\bar\rho}\Big | ^2 +  F(\rho, \nabla\rho | \, \bar \rho, \nabla\bar \rho \right. ) \Big )dx \Big |_t \leq
\int_{\T^d} \left. \left ( \frac{1}{2} \rho \Big | \frac{m}{\rho} -  \frac{\bar m}{\bar\rho}\Big | ^2 +  F(\rho, \nabla\rho   | \, \bar \rho, \nabla\bar \rho \right. ) \right )dx \Big |_0
\nonumber\\
&\  - \iint_{[0,t)\times\T^d}\left [ \rho \left( \frac{m}{\rho}-   \frac{\bar m}{\bar\rho}\right )\otimes \left( \frac{m}{\rho}-   \frac{\bar m}{\bar\rho}\right ) :\nabla  \left(    \frac{\bar m}{\bar\rho}\right )
+ \dx \left (\frac{\bar m}{\bar\rho}\right ) s(\rho, \nabla\rho\left | \bar \rho, \nabla\bar \rho \right. )\right ] \; dxdt
\nonumber\\
&\ 
- \iint_{[0,t)\times\T^d} \left [ \nabla \left (\frac{\bar m}{\bar\rho} \right) : H(\rho, \nabla\rho  | \bar \rho, \nabla\bar \rho  ) 
+ \nabla \dx \left (\frac{\bar m}{\bar\rho}\right ) \cdot r(\rho, \nabla\rho\left | \bar \rho, \nabla\bar \rho \right. )\right] \; dxdt\, ,
\end{align}
where $s(\rho , q | \bar \rho , \bar q)$, $r(\rho , q |  \bar \rho , \bar q )$ and $H(\rho,q | \bar \rho , \bar q)$ are the relative functions, defined by 
\eqref{stressdeffunc}, \eqref{defrels}  from the constituents of the stress tensor \eqref{stresskort}.
\end{theorem}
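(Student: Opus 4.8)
The plan is to establish \eqref{eq:RelEnKorGenFinalweak} as the rigorous, low--regularity version of the formal identity \eqref{reltotekort} (with $\zeta=0$): I would re-derive that identity, now as an inequality, directly from the weak formulations in Definition \ref{def:wksol} and the energy inequality \eqref{eq:Kortcap2Integr}, the functional manipulations of Section \ref{sec-funcrel-tote} being unavailable here because $\tfrac{\delta\cE}{\delta\rho}(\rho)$ and the higher derivatives of $\rho$ are not defined for the weak solution. The organising principle is that every test function is built from the \emph{strong} solution $(\brho,\baru)$ --- mollified when needed --- so that all spatial derivatives fall on smooth quantities, whereas $(\rho,m)$ enters only through $\rho$, $\nabla\rho$, $m$, $\tfrac{m\otimes m}{\rho}$, $S(\rho)$ and the energy functional, and is never differentiated beyond what \eqref{smoothnessass} permits. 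Concretely, I would first replace the indicator of $[0,t)$ by a Lipschitz temporal cut--off $\theta_\eps$ equal to $1$ on $[0,t-\eps]$ and vanishing on $[t,\infty)$, mollify in space the fields $\baru$, $\tfrac12|\baru|^2$, $\tfrac{\delta\cE}{\delta\rho}(\brho)=F_\rho(\brho,\nabla\brho)-\dx F_q(\brho,\nabla\brho)$ and the coefficients of $\tfrac{\delta S}{\delta\rho}(\brho)$ appearing in \eqref{stressvarkort}, and then pass to the limit; this is justified since $\rho$, $\nabla\rho$, $m$, $\tfrac{m\otimes m}{\rho}$ and $\tfrac12\tfrac{|m|^2}{\rho}+F(\rho,\nabla\rho)$ all lie in $C([0,T];L^1(\T^d))$ by Definition \ref{def:wksol} and \eqref{smoothnessass}.

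For the relative kinetic energy I would split
\[
\int_{\T^d}\tfrac12\rho\Big|\tfrac{m}{\rho}-\tfrac{\barm}{\brho}\Big|^2\,dx=\int_{\T^d}\Big(\tfrac12\tfrac{|m|^2}{\rho}-m\cdot\baru+\tfrac12\rho|\baru|^2\Big)\,dx,
\]
bound the first summand --- together with $\int F(\rho,\nabla\rho)\,dx$ --- from above at time $t$ by its value at $t=0$ through \eqref{eq:Kortcap2Integr} (the sole source of the inequality), evaluate $\int m\cdot\baru\,dx\big|_0^t$ by testing the weak momentum equation with (mollified) $\baru$ and using the strong momentum balance for $\barm=\brho\baru$, and evaluate $\int\tfrac12\rho|\baru|^2\,dx\big|_0^t$ by testing the weak mass equation with (mollified) $\tfrac12|\baru|^2$ and using the strong mass equation for $\brho$; this reproduces the weak form of the kinetic balance \eqref{relkine}. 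For the relative potential energy I would use \eqref{smoothnessass} to integrate the Taylor remainder by parts, $\int\big(F_\rho(\brho,\nabla\brho)(\rho-\brho)+F_q(\brho,\nabla\brho)\cdot\nabla(\rho-\brho)\big)\,dx=\int\tfrac{\delta\cE}{\delta\rho}(\brho)(\rho-\brho)\,dx$ (legitimate since $\brho$ is smooth), and then differentiate $\int\big(F(\brho,\nabla\brho)+\tfrac{\delta\cE}{\delta\rho}(\brho)(\rho-\brho)\big)\,dx$ in time --- the $\brho$--dependence by the chain rule on the strong solution, the $\rho$--dependence by testing the weak mass equation with the smooth function $\tfrac{\delta\cE}{\delta\rho}(\brho)$ and invoking \eqref{varder1}, \eqref{varder2} at the state $\brho$ --- to recover the weak form of \eqref{relpote}, namely $\iint S_{ij}(\rho|\brho)\,\partial_{x_j}\baru_i\,dx\,dt$ together with a cross term.

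Adding the relative kinetic and relative potential balances, the cross terms cancel exactly as in Section \ref{sec-funcrel-tote} --- in the weak setting the formal pairing $\langle\tfrac{\delta\cE}{\delta\rho}(\rho),\cdot\rangle$ that there builds the quantity $D$ is realised instead through \eqref{eq:Kortcap2Integr} for the $F(\rho,\nabla\rho)$ contribution and through \eqref{varder1} applied with the smooth field $\baru$ --- leaving
\[
\int_{\T^d}\!\Big(\tfrac12\rho|u-\baru|^2+F(\rho,\nabla\rho|\brho,\nabla\brho)\Big)dx\Big|_0^t\le\iint_{[0,t)\times\T^d}\!\Big(\nabla\baru:S(\rho|\brho)-\rho\nabla\baru:(u-\baru)\otimes(u-\baru)\Big)\,dx\,dt.
\]
Substituting the explicit form \eqref{relstresskort}, \eqref{stressdeffunc}, \eqref{defrels} of $S(\rho|\brho)$ and integrating by parts once more in the divergence--form term $\partial_{x_k}\!\big(r_k(\rho,\nabla\rho|\brho,\nabla\brho)\big)\delta_{ij}$ paired against $\partial_{x_j}\baru_i$, which transfers that derivative onto $\baru$ and generates the term in $\nabla\dx\baru\cdot r(\rho,\nabla\rho|\brho,\nabla\brho)$, yields precisely the right--hand side of \eqref{eq:RelEnKorGenFinalweak}.

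The conceptual content is exactly that of Section \ref{sec-funcrel-tote}; the difficulty, which I expect to be the main obstacle, lies entirely in keeping every step legal for the rough solution. The delicate points are: (i) the mollification and passage to the limit, since $(\brho,\baru)$ is only known to possess the $L^\infty$ (not continuous) derivatives listed in (H2), so one must check that exactly those derivatives suffice everywhere above, using $\brho_t=-\dx(\brho\baru)$ to trade time derivatives of $\tfrac{\delta\cE}{\delta\rho}(\brho)$ and $\tfrac{\delta S}{\delta\rho}(\brho)$ for space derivatives; (ii) the integrations by parts that move $\nabla$ off $(\rho-\brho)$ and off the divergence part of the relative Korteweg stress --- exactly where \eqref{smoothnessass} is indispensable --- together with the verification that the relative functions $s(\rho,\nabla\rho|\brho,\nabla\brho)$, $r(\cdot|\cdot)$, $H(\cdot|\cdot)$ are integrable under (H1), (H2), \eqref{smoothnessass}; and (iii) ensuring that the exact cancellation of the $D$--terms survives the translation from the $\langle\tfrac{\delta\cE}{\delta\rho}(\rho),\cdot\rangle$ bookkeeping of Section \ref{sec:ham}, which is unavailable for the weak solution, into the $S(\rho)$, weak--formulation language, forcing a consistent use of the weak identities \eqref{varder1}, \eqref{varder2}. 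The computation is long --- hence the detailed version in Appendix \ref{subsec:relenKort} --- but each individual step is routine once the functional--analytic setup of the first step is in place.
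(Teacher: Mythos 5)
Your proposal is correct and follows essentially the same route as the paper: the temporal cut-off applied to the energy inequality \eqref{eq:Kortcap2Integr} as the sole source of the inequality, testing the weak mass and momentum equations with $\tfrac{\delta\cE}{\delta\rho}(\brho)-\tfrac12|\baru|^2$ and $\baru$ respectively (the paper packages these into a single pair of test functions applied to the difference equations), converting the resulting second-variation terms into the relative stress via the weak identities \eqref{varder1}, \eqref{varder2} evaluated at $\brho$ with $\varphi=\baru$, $\psi=\rho-\brho$, and a final integration by parts on the divergence part of $S(\rho|\brho)$ to produce the $\nabla\dx\baru\cdot r$ term. The only cosmetic differences are your explicit mollification step (the paper simply extends the weak formulation to Lipschitz test functions) and your three-way splitting of the relative kinetic energy, which the paper absorbs into the combined test function.
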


A derivation of this formula via direct calculation valid for solutions of the Euler-Korteweg system that are {\it both smooth}  is provided in Appendix \ref{subsec:relenKort}.
Here, we relax the smoothness of one of the two solutions by using an argument that validates the formal variational  calculations of Section \ref{sec:ham}
for the Korteweg energy \eqref{kortfunc}.

\begin{proof} Let $(\rho, m)$ be a weak dissipative (or conservative) solution and $(\brho , \bar u)$ with $\bar m = \brho \bar u$ a strong conservative
 solution. We introduce in \eqref{eq:Kortcap2Integr} the choice of test function
\begin{equation}
\theta(\tau) := 
\begin{cases}
1, &\hbox{for}\ 0\leq \tau < t, \\
\frac{t-\tau}{\epsilon} + 1, &\hbox{for}\ t\leq \tau < t+\epsilon , \\
0, &\hbox{for}\ \tau \geq t+\epsilon ,
\end{cases}
\label{testthetaS}
\end{equation}
and let  $\epsilon \downarrow 0$; we then obtain 
\begin{equation}
\label{lem:ret1}
  \left. \int_{\mathbb{T}^d}\left ( \frac{1}{2} \frac{|m|^2}{\rho} + F(\rho,\nabla\rho) \right)dx  \right |^{t}_{\tau=0}
\leq  0\, .
\end{equation}
The same argument applied now to the strong conservative solution $(\brho, \bar m)$ gives
\begin{equation}\label{lem:ret}
 \left. \int_{\mathbb{T}^d}\left ( \frac{1}{2} \frac{|\bar m|^2}{\bar \rho} + F(\bar \rho,\nabla\bar\rho) \right)dx \right |^{t}_{\tau=0}
=0\,  .
\end{equation}

Next, consider the equation satisfied by the differences $(\rho - \bar\rho, m- \bar m)$:
\begin{align}
- \iint_{[0,+\infty)\times\T^d} \Big ( {\psi}_t (\rho - \bar \rho) +  \psi_{x_i} (m_i - \bar m_i)  \Big ) dx dt
- \int_{\mathbb{T}^d} \psi (\rho - \bar \rho) \Big |_{t=0} \, dx = 0\, ,
\label{weakmass}
\\
- \iint_{[0,+\infty)\times\T^d} \varphi_t \cdot (m - \bar m) +  \partial_{x_i}\varphi_j  \left (\frac{m_i m_j}{\rho} - \frac{\bar m_i \bar m_j}{\bar\rho} \right)
- \partial_{x_i} \varphi_j \Big(S_{ij} - \bar S_{ij} \Big) \,  dx dt
\nonumber
\\
- \int_{\mathbb{T}^d} \varphi(x,0) \cdot (m - \bar m) \Big |_{t=0}  dx 
= 0 \, ,
\label{weakmomentum}
\end{align}
which can be easily extended to hold for   Lipschitz test functions $\varphi$, $\psi$ 
compactly supported in $ [0,T)\times \T^d$.
In the above relations we introduce the test functions
$$
\psi = \theta(\tau)\left( \bar F_\rho  - \dx\big ( \bar F_q\big) -\frac{1}{2}\frac{|\bar m|^2}{\bar\rho^2}   \right ) , \quad
\varphi = \theta(\tau) \frac{\bar m}{\bar\rho} \,  ,
$$
with $\theta(\tau)$ as in \eqref{testthetaS}. Letting $\epsilon \to 0$ in  \eqref{weakmass}
we arrive at
\begin{align}
& \int_{\mathbb{T}^d}  \Big (  \bar F_\rho  (\rho - \brho) + \bar F_q\cdot\nabla (\rho - \bar \rho)   -\frac{1}{2}\frac{|\bar m|^2}{\bar\rho^2}  (\rho - \bar \rho) 
  \Big ) \Bigg |_{\tau=0}^t dx 
\nonumber\\
&\quad - \iint_{[0,t)\times\T^d} \left [ \partial_\tau\left( \bar F_\rho   -\frac{1}{2}\frac{|\bar m|^2}{\bar\rho^2}   \right ) (\rho - \bar \rho) 
+ \partial_\tau\big ( \bar F_q\big) \cdot  \nabla (\rho - \bar \rho) \right ]\, dx d\tau 
\nonumber\\
&\quad - \iint_{[0,t)\times\T^d} \nabla\left( \bar F_\rho  - \dx\big ( \bar F_q\big) -\frac{1}{2}\frac{|\bar m|^2}{\bar\rho^2}   \right ) \cdot (m- \bar m)   dx d\tau = 0\, .
\label{weakmass3}
\end{align}
The regularity $\rho (\cdot, t)  \in W^{1,1} (\T^d)$ for each $t \in [0,\infty)$ is sufficient to
give meaning to the calculation, taking advantage of the fact that $(\brho, \bar u)$ is a strong solution. 
Similarly, from \eqref{weakmomentum} we get
\begin{align}
& \int_{\mathbb{T}^d} \frac{\bar m}{\bar\rho}\cdot (m - \bar m)  \Big |_{\tau=0}^tdx
= \iint_{[0,t)\times\T^d} \partial_\tau\left( \frac{\bar m}{\bar\rho}   \right ) \cdot (m-\bar m) dx d\tau 
\nonumber\\
&\ \ +
\iint_{[0,t)\times\T^d} \partial_{x_i}\left( \frac{\bar m_j}{\bar\rho}   \right ) \left  ( \Big ( \frac{m_im_j}{\rho}- \frac{\bar m_i\bar m_j}{\bar\rho}  \Big )
  -  ( S_{ij} - \bar S_{ij} )  \right )   dx d\tau \, .
\label{weakmomentum2}
\end{align}

Adding \eqref{weakmass3} and \eqref{weakmomentum2} and using  
the equation 
$$
\del_t \bar u + ( \bar u \cdot \nabla ) \bar u  = -  \nabla \big ( \bar F_\rho - \dx \bar F_q     \big ) 
$$
and 
the chain rule for the Lipschitz solution $(\brho, \bar u)$, we obtain after some lengthy but straightforward calculations
\begin{align}
& \int_{\mathbb{T}^d} \Big [  \bar F_\rho (\rho - \bar \rho)  + \bar F_q\cdot\nabla (\rho - \bar \rho)   -\frac{1}{2}\frac{|\bar m|^2}{\bar\rho^2}  (\rho - \bar \rho)  +
\frac{\bar m}{\bar\rho}\cdot (m - \bar m) \Big ]   \Bigg |_{\tau=0}^t dx
\nonumber\\
&\ = - \iint_{[0,t)\times\T^d} \Big ( 
\bar F_{\rho\rho}  (\dx \bar m)   (\rho -\bar \rho) + \bar F_{\rho q}\cdot  (\nabla \dx\bar m) (\rho -\bar \rho)  
  \nonumber \\
&\quad 
 \qquad \qquad \qquad  + \bar F_{q \rho}\cdot\nabla(\rho - \bar\rho)\dx\bar m  + \bar F_{q_j q_i} \partial_{x_j}(\rho -\bar\rho)\partial_{x_i}(\dx\bar m)  \Big ) dx d\tau 
\nonumber\\
&\quad -
\iint_{[0,t)\times\T^d} \partial_{x_i}\left( \baru_j  \right )  ( S_{ij} - \bar S_{ij} )    dx d\tau 
\nonumber \\
&\quad + \iint_{[0,t)\times\T^d} \nabla \Big ( \bar F_\rho - \dx \bar F_q \Big ) \cdot (\rho u - \bar\rho\baru )  dx d\tau
\nonumber \\
&\quad +
\iint_{[0,t)\times\T^d}  \bigg [    \del_\tau \Big (-\tfrac{1}{2}|\baru|^2  \Big )  (\rho - \bar \rho) 
- \nabla \left (  \tfrac{1}{2} |\baru|^2 \right) \cdot (\rho u  - \bar\rho\baru)  + \del_\tau \left ( \baru \right ) \cdot (\rho u  - \bar \rho\baru)  
 \nonumber\\
&\quad 
 \qquad \qquad \qquad \qquad +  \partial_{x_i}\left (\baru_j \right)\left (\rho u_iu_j  - \bar\rho\baru_i\baru_j \right)
\vphantom{\tfrac{1}{2}} \bigg ]dx d\tau 
 \nonumber\\
 &\ = - \iint_{[0,t)\times\T^d} \bigg [ 
\bar F_{\rho\rho}  (\dx \bar m)   (\rho -\bar \rho) + \bar F_{\rho q}\cdot  (\nabla \dx\bar m) (\rho -\bar \rho)  
  \nonumber \\
&\quad
 \qquad \qquad \qquad  + \bar F_{q \rho}\cdot\nabla(\rho - \bar\rho)\dx\bar m  + \bar F_{q_j q_i} \partial_{x_j}(\rho -\bar\rho)\partial_{x_i}(\dx\bar m)  \bigg ] dx d\tau 
\nonumber\\
&\quad + \iint_{[0,t)\times\T^d}  \nabla \Big ( \bar F_\rho - \dx \bar F_q \Big ) \cdot   (\rho - \brho)  \bar u \,  dx d\tau 
\nonumber\\
 &\quad- \iint_{[0,t)\times\T^d} \Big [ \nabla \bar u : (S - \bar S) - \nabla \bar u : \rho (u - \bar u) \otimes (u - \bar u) \Big]
 dx d\tau  \nonumber
 \\
 &\ =: J_1 + J_2 + J_3 \, .
 \label{eq:linearcorrweak}
\end{align}

Consider next the weak form of the identity \eqref{formulakorteweg} for the strong solution $\brho$ and for $\varphi$ a vector-valued test function
$$
- \int  \nabla ( \bar F_\rho - \div \bar F_q ) \cdot (\brho \varphi)  dx =
\int  \bar F_\rho \div (\brho \varphi) + \bar F_q \cdot \nabla \div (\brho \varphi) dx = - \int S_{i j} (\brho) \frac{\del \varphi_i}{\del x_j} \, dx \, .
$$
We take the variational derivative of this formula along the direction of a smooth test function $\psi$.
Using \eqref{secondvar}, \eqref{stressvarkort} and recalling  \eqref{stressdeffunc}, we obtain
$$
\begin{aligned}
&\int \bar F_{\rho \rho} \psi \div (\brho \varphi) + \bar F_{q \rho} \psi \cdot \nabla \div (\brho \varphi) + \div (\brho \varphi) \bar F_{\rho q} \cdot \nabla \psi 
+ \nabla \div (\brho \varphi) \cdot \bar F_{qq} \nabla \psi \, dx
\\
&-  \int  \nabla \Big (  \bar F_\rho - \div \bar F_q \Big )   \cdot (\psi \varphi) \, dx
\\
&= - \int \Big [
- ( \bar s_\rho \psi +  \bar s_q \cdot \nabla \psi )  \delta_{i j} 
+ \frac{\del}{\del x_k} \Big (    { \frac{\del  \bar r_k}{\del \rho} }  \psi +   {\frac{\del \bar r_k}{\del q_l}} \frac{\del \psi}{\del x_l} \Big ) \delta_{i j} 
- \Big (    {\frac{\del \bar H_{i j} }{\del \rho}} \psi +   { \frac{\del \bar H_{i j} }{\del q_l }}  \frac{\del \psi}{\del x_l} \Big )  \Big ] \frac{\del \varphi_i}{\del x_j} \, dx\, .
\end{aligned}
$$
Now, set $\varphi = \bar u$,  $\psi = \rho - \brho$ and integrate over $(0,t)$ to obtain 
\begin{equation}
\label{secondvarek}
\begin{aligned}
J_1 + J_2 = 
\iint_{[0,t)\times\T^d} \Big (
- [ \bar s_\rho (\rho - \brho) &+  \bar s_q \cdot \nabla (\rho - \brho) ]  \delta_{i j} 
+ \frac{\del}{\del x_k} \Big (    { \frac{\del  \bar r_k}{\del \rho} }  (\rho - \brho)  +   {\frac{\del \bar r_k}{\del q_l}} \frac{\del (\rho - \brho)}{\del x_l} \Big ) \delta_{i j} 
\\
&\qquad - \Big (    {\frac{\del \bar H_{i j} }{\del \rho}} (\rho - \brho) +   { \frac{\del \bar H_{i j} }{\del q_l }}  \frac{\del (\rho - \brho)}{\del x_l} \Big )  \Big ) \frac{\del \bar u_i}{\del x_j} 
\, dx d\tau \, .
\end{aligned}
\end{equation}
Combining \eqref{relpotekort}, \eqref{relkinenergy}, \eqref{lem:ret1}, \eqref{lem:ret}, \eqref{eq:linearcorrweak} and \eqref{secondvarek} and \eqref{relstresskort} leads to
 \eqref{eq:RelEnKorGenFinalweak}.
\end{proof}

\subsection{Weak-strong stability in energy norms for constant capillarity}\label{subsubsec:energysol}
Using  \eqref{eq:RelEnKorGenFinalweak}, we may obtain stability estimates in various situations. We start with a potential 
energy
\begin{equation}\label{eq:constantcap}
F(\rho, \nabla\rho ) = h(\rho ) + \frac{1}{2}C_\kappa |\nabla \rho|^2,
\end{equation}
with constant capillarity $\kappa (\rho) = C_\kappa>0$.  
The internal energy and pressure are connected through the 
usual thermodynamic relation and are restricted to monotone pressures
\begin{equation}
\label{hypthermo}
 \quad \rho h'(\rho) = p(\rho) + h(\rho) \, ,
 \qquad p'(\rho) > 0.
\tag{H$_m$}
\end{equation}
Moreover, we impose the technical conditions that, for $\gamma > 1$  and some constants $k > 0$, $A > 0$,
\begin{align}
\label{eq:growthh}
h(\rho) &= \frac{k}{\gamma -1} \rho^\gamma + o(\rho^\gamma) \, , \quad \hbox{as}\ \rho\to +\infty
\tag{A$_1$}
\\
\label{hyppress}
| p'' (\rho ) | &\le A \frac{p'(\rho)}{\rho} =  A h'' (\rho)   \qquad \forall \rho  > 0 \, .
\tag{A$_2$}
\end{align}
These conditions are satisfied by the usual $\gamma$--law:  $p(\rho) = k\rho^{\gamma}$ with $\gamma > 1$.
We start with a preliminary result.

\begin{lemma}\label{lem:generalconvEuler3d}
(a)  Let $h\in C^0[0,+\infty)\cap C^2(0,+\infty)$ satisfy \eqref{eq:growthh} for  $\gamma >1$. 
If $\bar\rho\in M_{\brho} :=[\delta, \bar R]$ with $\delta>0$ and $\bar R<+\infty$, then there exist positive constants
$R_0$ (depending on $K$) and $C_1$, $C_2$ (depending on $K$ and $R_0$)
such that
\begin{equation}
\label{eq:hnorm}
h(\rho \left | \bar \rho \right.) \geq 
\begin{cases}
C_1 |\rho-\bar\rho|^2, &\hbox{for}\ 0\leq \rho\leq R_0,\ \bar\rho\in K, \\
C_2 |\rho-\bar\rho|^\gamma, &\hbox{for}\ \rho> R_0,\ \bar\rho\in K.
\end{cases}
\end{equation}

\noindent
(b) If $p(\rho)$ and $h(\rho)$ satisfy \eqref{hypthermo} and \eqref{hyppress} then
\begin{equation}
\label{eq:hyppress}
| p(\rho | \bar \rho) | \le A \, h (\rho | \bar \rho) \quad \forall \rho, \bar \rho > 0 \, .
\end{equation}
\end{lemma}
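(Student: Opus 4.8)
The plan is to treat the two parts separately, since part (a) is a pointwise coercivity estimate on the relative internal energy and part (b) is a comparison of the relative pressure with the relative energy, using the thermodynamic identity and the growth assumption \eqref{hyppress}.

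For part (a), recall that by Taylor's theorem with integral remainder $h(\rho|\bar\rho) = \int_{\bar\rho}^\rho (\rho - s)\,h''(s)\,ds$, so the sign and size of $h(\rho|\bar\rho)$ are controlled by the size of $h''$ on the interval between $\bar\rho$ and $\rho$. First I would treat the ``near'' regime $0\le\rho\le R_0$ with $\bar\rho\in M_{\brho}=[\delta,\bar R]$: here $\rho$ and $\bar\rho$ both lie in a fixed compact subset of $[0,R_0]$, on which $h''$ is continuous and strictly positive on the relevant part (using \eqref{eq:growthh} only far away and continuity of $h''$ on $(0,\infty)$; near $\rho=0$ one uses that the integrand $(\rho-s)h''(s)$ stays integrable or that $\bar\rho\ge\delta$ keeps us away from the singularity when $\rho$ is comparable to $\bar\rho$, and for $\rho$ small one estimates directly). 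Choosing $R_0$ large enough (depending on $K$, i.e. on $\bar R$) so that the asymptotic form \eqref{eq:growthh} is in force for $\rho>R_0$, one gets a uniform lower bound $h''\ge c>0$ on $[\,\cdot\,,R_0]$ away from $0$, hence $h(\rho|\bar\rho)\ge C_1|\rho-\bar\rho|^2$ by the integral remainder formula. For the ``far'' regime $\rho>R_0$ one uses \eqref{eq:growthh}: $h(\rho) = \frac{k}{\gamma-1}\rho^\gamma + o(\rho^\gamma)$, so $h(\rho) \gtrsim \rho^\gamma$ for $\rho$ large, while $h(\bar\rho) + h'(\bar\rho)(\rho-\bar\rho)$ grows only linearly in $\rho$ with coefficients bounded uniformly for $\bar\rho\in[\delta,\bar R]$; hence $h(\rho|\bar\rho)\ge \frac12\frac{k}{\gamma-1}\rho^\gamma - C \gtrsim |\rho-\bar\rho|^\gamma$ for $\rho>R_0$ with $R_0$, $C_2$ depending on $K$ and $R_0$. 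I would spell out that $R_0$ is chosen first (from $K$ and the $o(\rho^\gamma)$ term), then $C_1,C_2$ afterwards.

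For part (b), differentiate the thermodynamic relation \eqref{hypthermo}: from $\rho h'(\rho) = p(\rho) + h(\rho)$ one gets $h'(\rho) + \rho h''(\rho) = p'(\rho) + h'(\rho)$, i.e. $p'(\rho) = \rho h''(\rho)$, and differentiating once more $p''(\rho) = h''(\rho) + \rho h'''(\rho)$. The cleanest route is to write both relative quantities via the integral remainder: $p(\rho|\bar\rho) = \int_{\bar\rho}^\rho (\rho-s)\,p''(s)\,ds$ and $h(\rho|\bar\rho) = \int_{\bar\rho}^\rho (\rho-s)\,h''(s)\,ds$, noting that in both integrals the weight $(\rho-s)$ has a constant sign on the segment from $\bar\rho$ to $\rho$. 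Then \eqref{hyppress}, namely $|p''(\rho)|\le A\,p'(\rho)/\rho = A\,h''(\rho)$, gives $|p''(s)|\le A\,h''(s)$ for every $s$ between $\bar\rho$ and $\rho$, and since $h''>0$ (because $p'>0$ by \eqref{hypthermo} and $p'=\rho h''$), we can bound
\[
|p(\rho|\bar\rho)| \le \Bigl|\int_{\bar\rho}^\rho (\rho-s)\,|p''(s)|\,ds\Bigr| \le A\Bigl|\int_{\bar\rho}^\rho (\rho-s)\,h''(s)\,ds\Bigr| = A\,h(\rho|\bar\rho),
\]
which is exactly \eqref{eq:hyppress}.

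I expect the main obstacle to be in part (a), specifically handling the regime where $\rho$ is small (near vacuum) while $\bar\rho$ stays in $[\delta,\bar R]$: one must verify that $h(\rho|\bar\rho)$ is still bounded below by $C_1|\rho-\bar\rho|^2$ there, which requires knowing the behavior of $h$ (and hence the integrability of the remainder) as $\rho\to 0^+$; the hypotheses as stated ($h\in C^0[0,\infty)\cap C^2(0,\infty)$ plus \eqref{eq:growthh} at infinity) are used together with the fact that $\bar\rho$ is bounded away from $0$ so that the segment of integration from $\bar\rho$ down towards $\rho$ only touches small values of $s$ when $\rho$ itself is small, and on $[\,\rho,\bar R\,]$ with $\bar\rho\ge\delta$ one gets a uniform positive lower bound on $h''$ from continuity once $R_0$ is fixed. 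Getting the dependence of the constants ($R_0$ on $K$, then $C_1,C_2$ on $K$ and $R_0$) bookkept correctly is the only delicate point; the rest is routine Taylor estimation.
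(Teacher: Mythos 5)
Your proof of part (b) is essentially the paper's own proof: the paper writes the relative quantities via the integral form of the Taylor remainder, $p(\rho|\bar\rho)=(\rho-\bar\rho)^2\int_0^1\int_0^\tau p''(s\rho+(1-s)\bar\rho)\,ds\,d\tau$, and applies the pointwise bound $|p''|\le A\,h''$ from \eqref{hyppress} together with $h''=p'/\rho$; your version $\int_{\bar\rho}^{\rho}(\rho-s)p''(s)\,ds$ is the same remainder in a different parametrization, and your observation that the weight has constant sign is exactly what makes the comparison legitimate. For part (a) the paper gives no proof at all --- it cites \cite{LT13}, Lemma 2.4 --- so your two-regime argument (uniform positive lower bound on $h''$ on a compact set bounded away from vacuum for $\rho\le R_0$, plus the $\gamma$-growth \eqref{eq:growthh} dominating the affine part for $\rho>R_0$) is a reconstruction of that cited result rather than a departure from the paper; it is sound, including the near-vacuum case where $\bar\rho\ge\delta$ forces the segment of integration to cross a fixed interval on which $h''$ is bounded below. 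One point worth making explicit: your argument for (a) uses $h''>0$ on $(0,\infty)$ (you invoke "continuous and strictly positive"), which is not listed among the hypotheses of part (a) as stated but is indispensable --- without convexity the lower bound \eqref{eq:hnorm} is false --- and is supplied in the paper's applications by \eqref{hypthermo}; stating that you are importing this assumption would close the only gap between your write-up and a complete proof.
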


\begin{proof}
Part (a) is proved in \cite[Lemma 2.4]{LT13}.  To show (b), one checks the identity
$$
\begin{aligned}
p( \rho | \bar \rho)  &= p(\rho) - p ( \bar \rho) - p'(\bar \rho) (\rho - \bar \rho)
\\
&= (\rho - \bar \rho)^2 \int_0^1 \int_0^\tau p'' (s\rho + (1-s) \bar \rho) ds d\tau \, ,
\end{aligned}
$$ 
and a similar identity holds for $h(\rho | \bar \rho)$. Recall  now that $h'' = \frac{p'}{\rho}$. 
Then hypothesis \eqref{hyppress} implies 
$$
\begin{aligned}
| p (\rho | \bar \rho ) | &\le (\rho - \bar \rho)^2 \int_0^1 \int_0^\tau |p'' (s\rho + (1-s) \bar \rho) | ds d\tau
\\
&\le A  (\rho - \bar \rho)^2 \int_0^1 \int_0^\tau h'' (s\rho + (1-s) \bar \rho) ds d\tau 
\\
&= A \,  h(\rho | \bar \rho )
\end{aligned}
$$
and \eqref{eq:hyppress} follows.
\end{proof}

Consider next the relative entropy inequality \eqref{eq:RelEnKorGenFinalweak} and apply it to the framework of \eqref{eq:constantcap}.
The bound \eqref{hypCauchyK2} implies the regularity $\rho\in C([0,T];L^\gamma(\T^d))$ and $\nabla \rho \in C([0,T]; L^2(\T^d))$ for the
solution, while  the relative internal energy reads
\begin{equation}
\label{relpotecc}
F(\rho, \nabla\rho\left | \bar \rho, \nabla\bar \rho \right. ) = h(\rho \left | \bar \rho  \right. ) + \frac{1}{2}C_\kappa|\nabla \rho - \nabla\bar\rho|^2 \, .
\end{equation}
In the following stability theorem we use the relative energy
 \begin{equation}
\varphi(t) = \int_{\T^d} \Big ( \frac{1}{2} \rho \left | \frac{m}{\rho} -  \frac{\bar m}{\bar\rho}\right | ^2 +  F(\rho, \nabla\rho\left | \bar \rho, \nabla\bar \rho \right. ) \Big ) (x,t) dx 
\label{phicc}
\end{equation}
as a yardstick to estimate the distance between the solutions.

\begin{theorem}\label{th:finalKortconst}
Let  $F(\rho, q)$ be given by \eqref{eq:constantcap} with $h(\rho)$ satisfying \eqref{hypthermo}, \eqref{eq:growthh} and \eqref{hyppress}.
Let $T>0$ be fixed and let $(\rho , u)$ be a weak solution satisfying  {\rm \textbf{(H1)}} and 
$(\bar \rho, \bar u)$ a strong solution satisfying  {\rm \textbf{(H2)}}.
Assume that the strong solution $\bar\rho$ is bounded and away from vacuum, that is $\bar\rho \in M_{\bar\rho}$. Then, 
for any data if $\gamma \ge 2$, and for initial data satisfying the restriction
$$
\int_{\T^d} (\rho _0 - \bar \rho_0 ) dx = 0 \, ,
$$
when $1 < \gamma < 2$, we have the stability estimate
\begin{equation}
\varphi(t) \leq C \varphi(0), \quad t\in [0,T] \, , 
\label{eq:finalKortconst}
\end{equation}
where $C$ is a positive constant depending on $T$, $\bar u$ and its derivatives.
In particular, if $\varphi(0)=0$ , then
\begin{equation}
\sup_{t\in[0,T]}\varphi(t) = 0\, ,
\label{eq:weakstronguniqu}
\end{equation}
implying weak--strong uniqueness for the model  and the framework of solutions under consideration.
\end{theorem}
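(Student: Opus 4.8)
The plan is to specialize the relative energy inequality \eqref{eq:RelEnKorGenFinalweak} of Theorem \ref{theo:relenweak} to the constant capillarity energy \eqref{eq:constantcap} and then close a Gr\"onwall estimate for the functional $\varphi$ of \eqref{phicc}. First I would evaluate the constituents $s,r,H$ of \eqref{stressdeffunc} for $F(\rho,q)=h(\rho)+\tfrac12 C_\kappa|q|^2$, which gives $s(\rho,q)=p(\rho)+\tfrac12 C_\kappa|q|^2$, $r(\rho,q)=C_\kappa\rho q$ and $H(\rho,q)=C_\kappa\,q\otimes q$, and then their relative (second-order Taylor remainder) versions $s(\rho,q\,|\,\bar\rho,\bar q)=p(\rho|\bar\rho)+\tfrac12 C_\kappa|q-\bar q|^2$, $r(\rho,q\,|\,\bar\rho,\bar q)=C_\kappa(\rho-\bar\rho)(q-\bar q)$ and $H(\rho,q\,|\,\bar\rho,\bar q)=C_\kappa(q-\bar q)\otimes(q-\bar q)$. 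Writing $\sigma:=\rho-\bar\rho$ and $w:=\tfrac{m}{\rho}-\tfrac{\bar m}{\bar\rho}$ and substituting these, together with \eqref{relpotecc}, into \eqref{eq:RelEnKorGenFinalweak}, one is led to an inequality of the schematic form $\varphi(t)\le\varphi(0)+\iint_{[0,t)\times\T^d}(T_1+\dots+T_5)\,dx\,d\tau$, with $T_1\sim\rho\,w\otimes w:\nabla\bar u$, $T_2\sim(\dx\bar u)\,p(\rho|\bar\rho)$, $T_3,T_4\sim C_\kappa|\nabla\bar u|\,|\nabla\sigma|^2$ and $T_5=C_\kappa\,\sigma\,\nabla(\dx\bar u)\cdot\nabla\sigma$.

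The first four terms are routine. One has $|T_1|\le\|\nabla\bar u\|_\infty\,\rho|w|^2$; $|T_2|\le A\,\|\dx\bar u\|_\infty\,h(\rho|\bar\rho)$ by Lemma \ref{lem:generalconvEuler3d}(b); and $|T_3|+|T_4|\le C\,\|\nabla\bar u\|_\infty\,|\nabla\sigma|^2$. Since the convexity of $h$ (i.e. $h''=p'/\rho>0$) makes $h(\rho|\bar\rho)\ge0$, and \eqref{relpotecc} shows that $\varphi$ dominates $\int(\tfrac12\rho|w|^2+h(\rho|\bar\rho)+\tfrac12 C_\kappa|\nabla\sigma|^2)\,dx$, all four integrate to at most $C\varphi(\tau)$, with $C$ depending on $T$ and on the $L^\infty$ norms of $\bar u$ and $\nabla\bar u$ supplied by \textbf{(H2)}. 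The genuinely delicate contribution is $T_5$, coming from the relative $r$-function: integration by parts is not available here (it would require $\triangle\dx\bar u$, beyond \textbf{(H2)}), so I would use Cauchy--Schwarz, $\big|\iint T_5\big|\le C_\kappa\,\|\nabla\dx\bar u\|_\infty\,\|\sigma(\tau)\|_{L^2}\|\nabla\sigma(\tau)\|_{L^2}\le C\big(\|\sigma(\tau)\|_{L^2}^2+\varphi(\tau)\big)$. Thus the whole argument reduces to the coercivity estimate $\|\sigma(t)\|_{L^2}^2\le C\varphi(t)$, and this is where the capillary term and the hypotheses on $\gamma$ come into play.

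To establish $\|\sigma(t)\|_{L^2}^2\le C\varphi(t)$ I would distinguish two cases. If $\gamma\ge2$, take the threshold $R_0$ of Lemma \ref{lem:generalconvEuler3d}(a), enlarged if necessary so that $R_0\ge\bar R+1$: on $\{\rho\le R_0\}$ one has $h(\rho|\bar\rho)\ge C_1\sigma^2$, while on $\{\rho>R_0\}$ one has $\sigma=\rho-\bar\rho\ge1$, hence $\sigma^2\le|\sigma|^\gamma\le C_2^{-1}h(\rho|\bar\rho)$; integrating over $\T^d$ gives $\|\sigma\|_{L^2}^2\le C\int h(\rho|\bar\rho)\,dx\le C\varphi$, with no restriction on the data. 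If $1<\gamma<2$, the assumption $\int_{\T^d}(\rho_0-\bar\rho_0)\,dx=0$, together with conservation of mass for the weak solution and for the strong solution (obtained by testing the mass equations with functions of $\tau$ alone), gives $\int_{\T^d}\sigma(t)\,dx=0$ for every $t$; the Poincar\'e--Wirtinger inequality on $\T^d$ then yields $\|\sigma(t)\|_{L^2}^2\le C_P\|\nabla\sigma(t)\|_{L^2}^2\le\tfrac{2C_P}{C_\kappa}\varphi(t)$. In either case one reaches $\varphi(t)\le\varphi(0)+C\int_0^t\varphi(\tau)\,d\tau$ on $[0,T]$, and Gr\"onwall's inequality gives \eqref{eq:finalKortconst}; taking $\varphi(0)=0$ gives \eqref{eq:weakstronguniqu}.

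I expect the cross term $T_5$ to be the main obstacle: without an upper bound on $\rho$, $\varphi$ does not by itself control $\|\sigma\|_{L^2}$, and recovering that control is exactly what forces the superquadratic growth \eqref{eq:growthh} when $\gamma\ge2$, and the equal-mass restriction (coupled with the bound $\nabla\sigma\in L^2$ coming from the capillarity, which is unavailable without the second gradient term) when $\gamma<2$. One should also record at the outset that the regularity \eqref{smoothnessass} needed to apply Theorem \ref{theo:relenweak} is automatic here: \eqref{hypCauchyK2} with \eqref{eq:constantcap} gives $\nabla\rho\in C([0,T];L^2(\T^d))$, and, together with the growth condition \eqref{eq:growthh}, $\rho\in C([0,T];L^\gamma(\T^d))\subset C([0,T];L^1(\T^d))$.
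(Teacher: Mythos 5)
Your proposal is correct and follows essentially the same route as the paper: specialize $s,r,H$ and their relative versions for the constant-capillarity energy, bound the quadratic terms and the pressure term (via Lemma \ref{lem:generalconvEuler3d}(b)) by $\varphi$, and control the cross term $C_\kappa(\rho-\bar\rho)\nabla(\rho-\bar\rho)$ either through the lower bound $h(\rho|\bar\rho)\gtrsim|\rho-\bar\rho|^2$ when $\gamma\ge 2$ or through equal mass plus Poincar\'e when $1<\gamma<2$, before closing with Gr\"onwall. Your identification of $T_5$ as the delicate term, the verification of \eqref{smoothnessass}, and even the corrected formula $s(\rho,q)=p(\rho)+\tfrac12 C_\kappa|q|^2$ all match the paper's argument.
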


A similar result to Theorem  \ref{th:finalKortconst} is obtained in \cite{DFM15} for the constant capillarity case.

\begin{proof} For the potential energy \eqref{eq:constantcap}, the  functions \eqref{stressdeffunc} take the form
\begin{align*}
 H(\rho , q) &= C_\kappa q \otimes q\, ;\\ 
 s(\rho , q) & = h(\rho)  + \tfrac{1}{2} C_\kappa |q|^2;\\
 r(\rho , q) &= C_\kappa \rho q\, .
\end{align*}
The relative potential energy \eqref{relpotecc}  controls the terms 
$s(\rho, \nabla\rho\left | \bar \rho, \nabla\bar \rho \right. )$, $H(\rho, \nabla\rho\left | \bar \rho, \nabla\bar \rho \right. )$, since
due to \eqref{eq:hyppress} and \eqref{relpotecc}
\begin{align*}
\big | H(\rho, \nabla\rho\left | \bar \rho, \nabla\bar \rho \right. ) \big | &=  
\big | C_\kappa (\nabla \rho - \nabla\bar\rho) \otimes (\nabla \rho - \nabla\bar\rho) \big | 
\leq C_\kappa  |\nabla \rho - \nabla\bar\rho|^2  \, ,
\\[6pt]
| s(\rho, \nabla\rho\left | \bar \rho, \nabla\bar \rho \right. ) | & =   \big | p(\rho \left | \bar \rho  \right. )  + \tfrac{1}{2} C_\kappa |\nabla \rho - \nabla\bar\rho|^2  \big |
\leq C F(\rho, \nabla\rho\left | \bar \rho, \nabla\bar \rho \right. )\, ,
\end{align*}
and 
\begin{equation*}
r(\rho, \nabla\rho\left | \bar \rho, \nabla\bar \rho \right. ) = C_\kappa(\rho-\bar\rho)(\nabla \rho - \nabla\bar\rho)\, .
\end{equation*}

Let $(\rho, u)$ be a weak dissipative (or conservative) solution of \eqref{eq:Kortcap2} and $(\bar \rho, \bar u)$ be a strong
solution. Applying Theorem \ref{theo:relenweak} we obtain
\begin{equation}
\label{interest}
\varphi (t) \le \varphi (0) + C \int_0^t  \varphi (\tau) d\tau  +  C  \int_0^t \int |\rho-\bar\rho | | \nabla \rho - \nabla\bar\rho | \, dx d\tau\,.
\end{equation}

It remains to estimate the last term in \eqref{interest}. Consider first the range of exponents $\gamma \ge 2$.
For exponents $\gamma \ge 2$, by enlarging if necessary $R_0$ so that $|\rho - \bar\rho| \geq 1$ for $\rho > R_0$ and $\bar\rho\in K =[\delta, \bar R]$, we   obtain
\begin{equation}\label{eq:hnorm2}
h(\rho \left | \bar \rho \right.) \geq c_0 |\rho-\bar\rho|^2, \quad \hbox{for}\ \gamma \geq2,\ 
\rho\geq0,\ \bar\rho\in K\, ,
\end{equation}
where $c_0>0$ depends solely on $K$.  By Lemma \ref{lem:generalconvEuler3d}, 
it follows
\begin{equation*}
|\rho-\bar\rho | | \nabla \rho - \nabla\bar\rho |  \leq C  F(\rho, \nabla\rho\left | \bar \rho, \nabla\bar \rho \right. )\, .
\end{equation*}
If $1 < \gamma < 2$ and the initial data satisfy $ \int_{\T^d}  (\rho_0  - \bar \rho_0)  dx = 0$, then   the mass conservation equation
$$
\int_{\T^d} (\rho - \bar \rho) \, dx = 0
$$
and the Poincar\'{e} inequality allow to conclude
$$
\int_{\T^d} |\rho-\bar\rho | | \nabla \rho - \nabla\bar\rho | \, dx  \le C \int_{\T^d } | \nabla \rho - \nabla \bar \rho |^2 \, dx\, .
$$
In either case we conclude by
\begin{equation}
\varphi (t) \le \varphi (0) + C \int_0^t  \varphi (\tau) d\tau  
\end{equation}
and the result follows from Gronwall's inequality.
\end{proof}

\begin{remark}\label{rem:gamma=2}
The absence of vacuum for $\bar\rho$ required in the theorem above is needed solely to apply Lemma \ref{lem:generalconvEuler3d}.
In the special case $p(\rho) = \rho^\gamma$, $\gamma=2$, for which $h(\rho \left | \bar \rho \right.)=|\rho-\bar\rho|^2$, this extra condition is not needed, 
and the stability estimate \eqref{eq:finalKortconst} is obtained even in the presence of vacuum 
for both solutions $\rho$ and $\bar\rho$. However, the regularity assumption  \textbf{(H2)} is still needed; 
this might be inconsistent  for certain models with the presence of vacuum.
\end{remark}

\subsection{Stability estimates for bounded weak solutions with  general capillarity}\label{subsubsec:linfsol}
Next, we study energies  with non-constant, smooth capillarity $\kappa(\rho)$,
\begin{equation}
\label{hyposp}
F(\rho, q ) =  h(\rho ) +  \frac{1}{2}  \kappa(\rho)  |q|^2 \qquad \mbox{ with  $\kappa (\rho) > 0$}\, .
\end{equation}
The assumption $\kappa (\rho) > 0$ guarantees coercivity of the energy. It is also related to the convexity of the energy.

\begin{lemma}\label{lem:Funifconvex}
Let $F(\rho, q)$ be defined by \eqref{hyposp}.
\begin{itemize}
\item[(i)]  If \begin{equation}
\label{hyp4c}
h''(\rho ) > 0, \quad \kappa (\rho) > 0\, , \quad \kappa(\rho)\kappa''(\rho) - 2(\kappa'(\rho))^2\geq 0\, ,
\tag{ H$_{4c}$}
\end{equation}
then $F$  is strictly convex for any $(\rho, q)\in \mathbb{R}\times\mathbb{R}^d$;

\item[(ii)] If for some constants $\alpha_1 > 0$ and $\alpha_2 > 0$,
\begin{equation}
\label{hyp4u}
h''(\rho)\geq \alpha_1 \, , \quad \kappa (\rho)  - \frac{ 2(\kappa'(\rho))^2  }{  \kappa''(\rho) } \geq \alpha_2 \, , \quad \kappa''(\rho)  > 0 \, ,
\tag{ H$_{4uc}$}
\end{equation} 
then $F$ is uniformly convex. 
\end{itemize} 
\end{lemma}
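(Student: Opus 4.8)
The plan is to reduce both statements to a pointwise property of the Hessian of $F$ in the variables $(\rho,q)$. First I would differentiate $F(\rho,q)=h(\rho)+\tfrac12\kappa(\rho)|q|^{2}$ twice to obtain
\[
\nabla^{2}_{(\rho,q)}F(\rho,q)=\begin{pmatrix} h''(\rho)+\tfrac12\kappa''(\rho)|q|^{2} & \kappa'(\rho)\,q^{T}\\[2pt] \kappa'(\rho)\,q & \kappa(\rho)\,\mathbb{I}\end{pmatrix},
\]
and evaluate the associated quadratic form on a vector $A=(a,b^{T})^{T}$ with $a\in\mathbb{R}$, $b\in\mathbb{R}^{d}$:
\[
Q(a,b):=A\cdot\nabla^{2}F\,A=\Big(h''(\rho)+\tfrac12\kappa''(\rho)|q|^{2}\Big)a^{2}+2\kappa'(\rho)\,a\,(q\cdot b)+\kappa(\rho)|b|^{2}.
\]
Strict convexity amounts to $Q(a,b)>0$ whenever $(a,b)\neq(0,0)$, and uniform convexity to $Q(a,b)\geq c\,(a^{2}+|b|^{2})$ for a constant $c>0$ independent of $(\rho,q)$; the remaining work is just controlling the cross term $2\kappa'a(q\cdot b)$.

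For part (i) I would complete the square in $b$. Using $\kappa(\rho)>0$,
\[
\kappa|b|^{2}+2\kappa'a(q\cdot b)=\kappa\Big|b+\tfrac{\kappa'a}{\kappa}q\Big|^{2}-\tfrac{(\kappa')^{2}}{\kappa}a^{2}|q|^{2},
\]
so that
\[
Q(a,b)=h''(\rho)\,a^{2}+\frac{\kappa(\rho)\kappa''(\rho)-2(\kappa'(\rho))^{2}}{2\kappa(\rho)}\,|q|^{2}a^{2}+\kappa(\rho)\Big|b+\tfrac{\kappa'(\rho)a}{\kappa(\rho)}q\Big|^{2}.
\]
Under \eqref{hyp4c} each of the three terms on the right is nonnegative; here it is worth noting that $\kappa>0$ together with $\kappa\kappa''\geq2(\kappa')^{2}$ already forces $\kappa''\geq0$, so no extra sign condition on $\kappa''$ is needed. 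Moreover $Q(a,b)=0$ forces $a=0$, since $h''>0$, and then $b=0$, since $\kappa>0$; hence $Q$ is positive definite and $F$ strictly convex.

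For part (ii) the additional hypothesis $\kappa''>0$ allows me to absorb the cross term into the diagonal $a$-entry by Young's inequality: $|2\kappa'a(q\cdot b)|\leq2|\kappa'|\,|a|\,|q|\,|b|\leq\tfrac12\kappa''|q|^{2}a^{2}+\tfrac{2(\kappa')^{2}}{\kappa''}|b|^{2}$. Substituting this bound, the $\tfrac12\kappa''|q|^{2}a^{2}$ contributions cancel and
\[
Q(a,b)\geq h''(\rho)\,a^{2}+\Big(\kappa(\rho)-\frac{2(\kappa'(\rho))^{2}}{\kappa''(\rho)}\Big)|b|^{2}\geq\alpha_{1}a^{2}+\alpha_{2}|b|^{2}
\]
by \eqref{hyp4u}, so $\nabla^{2}F\geq\min(\alpha_{1},\alpha_{2})\,\mathbb{I}$ at every $(\rho,q)$ and $F$ is uniformly convex.

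I expect no serious obstacle; the only point requiring a little care is the uniformity in $q$ in part (ii). A priori the cross term $2\kappa'a(q\cdot b)$ is unbounded as $|q|\to\infty$, but it is only linear in $|q|$ whereas the diagonal entry $\tfrac12\kappa''|q|^{2}$ grows quadratically, which is exactly why the absorption above yields a constant depending only on $\alpha_{1},\alpha_{2}$ (and not on $q$, nor on $\rho$ once \eqref{hyp4u} is assumed uniformly in $\rho$). Everything else is routine differentiation and algebra.
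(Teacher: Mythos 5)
Your proof is correct and follows essentially the same route as the paper: compute the pointwise Hessian of the integrand $F$ and show the associated quadratic form is positive definite (resp.\ bounded below by $\min(\alpha_1,\alpha_2)|A|^2$); your Young-inequality absorption in (ii) is exactly the paper's completion of the square in the mixed direction. The one small difference is in (i), where you complete the square in $b$ (dividing only by $\kappa>0$) rather than using the paper's identity $Q=h''a^2+\tfrac{1}{\kappa''}[\kappa''\kappa-2(\kappa')^2]|b|^2+\tfrac{2}{\kappa''}|\tfrac{\kappa''}{2}aq+\kappa' b|^2$, which tacitly divides by $\kappa''$ even though {\rm (H$_{4c}$)} permits $\kappa''=0$ (e.g.\ constant capillarity); your variant handles that degenerate case cleanly and is a mild improvement.
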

\begin{proof}
A direct calculation shows that the Hessian matrix of $F(\rho, q)$ is given by
\begin{equation*}
\nabla_{(\rho,q)}^2 F (\rho,q) = 
\begin{pmatrix}
h''(\rho) + \frac12 \kappa''(\rho)|q|^2 & \kappa'(\rho)q \\
\kappa'(\rho)q^T & \kappa(\rho)\mathbb{I}
\end{pmatrix}\, .
\end{equation*}
Let $A = (a , b )^T$ be a vector with $a \in \R$, $b \in \R^d$ and consider the quadratic form
$$
\begin{aligned}
A \cdot ( \nabla_{(\rho,q)}^2 F (\rho, q)  )  A &= 
( a, b^T  ) \cdot 
\begin{pmatrix}
h''(\rho) + \frac12 \kappa''(\rho)|q|^2 & \kappa'(\rho)q \\
\kappa'(\rho)q^T & \kappa(\rho)\mathbb{I}
\end{pmatrix}
\begin{pmatrix}
a \\ b
\end{pmatrix}
\\
&= \big ( h'' + \tfrac{1}{2} \kappa'' |q|^2 ) a^2  + 2 \kappa' a q \cdot b + \kappa |b|^2
\\
&= h'' a^2 + \frac{1}{\kappa''} \big [ \kappa'' \kappa - 2 (\kappa')^2 \big ] |b|^2 + \frac{2}{\kappa''} \big | \frac{\kappa''}{2} a q + \kappa' b \big |^2\, .
\end{aligned}
$$
From here (i) and (ii) follow.
\end{proof}

The calculation
$$
\begin{aligned}
&F(\rho, q | \bar \rho ,  \bar q ) =  F(\rho, q) - F(\bar \rho , \bar q) - F_\rho (\bar \rho , \bar q) (\rho - \bar \rho) - F_q (\bar \rho , \bar q) \cdot (q - \bar q)
\\
&\quad =  ( \rho - \bar \rho , (q - \bar q)^T  )  \cdot   \int_0^1 \int_0^\tau   \nabla_{(\rho,q)}^2 F ( s\rho + (1-s) \bar \rho , sq + (1-s) \bar q   )   ds d\tau
\begin{pmatrix}
\rho - \bar \rho  \\ q - \bar q
\end{pmatrix}
\end{aligned}
$$
indicates the relevance of uniform convexity in bounding the relative potential energy.
We develop a stability estimate in terms of the following distance:
 \begin{equation}
\Phi(t) = \int_{\T^d}\left. \left ( \frac{1}{2} \rho \left | \frac{m}{\rho} -  \frac{\bar m}{\bar\rho}\right | ^2 +  |\rho - \bar\rho|^2 + |\nabla\rho - \nabla\bar\rho|^2 \right )dx \right |_t .
\label{eq:normfin}
\end{equation}

\begin{theorem}\label{th:finalKortconst2}
Consider two solutions $(\rho, m)$ and $(\brho, \bar m)$ satisfying the hypotheses {\rm \textbf{(H1)}} and  {\rm \textbf{(H2)}} respectively for some $T > 0$ 
and suppose that \eqref{hyposp} and  \eqref{hyp4u} hold.
Furthermore, assume that $\rho$ is bounded in $L^\infty$ and away from vacuum, that is  $\rho_{min} \le \rho(x,t)\leq \rho_{max}$ for a.e.\ $(x,t)$;
also  that  $\bar\rho$ is bounded away from vacuum, that is $\bar\rho \in M_{\bar\rho}$.
Then, the stability estimate
\begin{equation}
\Phi(t) \leq C \Phi(0) , \quad t\in [0,T] \, ,
\label{eq:finalKortconst2}
\end{equation}
holds,  where $C$ is a positive constant depending only on $T$, $\rho_{min}$, $\rho_{max}$,  $\bar u$ and its derivatives.
In particular, if $\Phi(0)=0$ , then
\begin{equation}
\sup_{t\in[0,T]}\Phi(t) = 0\, ,
\label{eq:weakstronguniqu2}
\end{equation}
which implies weak--strong uniqueness for the model and the framework of solutions under consideration.
\end{theorem}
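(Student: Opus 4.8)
The plan is to combine the relative-energy inequality \eqref{eq:RelEnKorGenFinalweak} of Theorem \ref{theo:relenweak} with the uniform convexity of $F$ from Lemma \ref{lem:Funifconvex}(ii), closing a Gronwall argument for $\Phi$. Throughout, write $\varphi(t) = \int_{\T^d}\big(\tfrac12\rho|\tfrac{m}{\rho}-\tfrac{\bar m}{\bar\rho}|^2 + F(\rho,\nabla\rho\,|\,\bar\rho,\nabla\bar\rho)\big)\,dx$ for the relative energy associated with \eqref{hyposp}. The first step is to relate $\varphi$ and $\Phi$. Using the integral (Taylor-remainder) representation of $F(\rho,q\,|\,\bar\rho,\bar q)$ displayed before the statement and the pointwise lower bound $A\cdot\nabla^2_{(\rho,q)}F\,A \ge \alpha_1 a^2+\alpha_2|b|^2$ established in the proof of Lemma \ref{lem:Funifconvex} under \eqref{hyp4u}, one gets $F(\rho,\nabla\rho\,|\,\bar\rho,\nabla\bar\rho) \ge c_0(|\rho-\bar\rho|^2+|\nabla\rho-\nabla\bar\rho|^2)$ and hence $\Phi(t)\le C_1\varphi(t)$ for all $t\in[0,T]$. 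Conversely, at $t=0$ the same representation gives an upper bound whose Hessian entries are, on the range of $\rho$ dictated by $\rho_{min}\le\rho_0\le\rho_{max}$ and $\bar\rho_0\in M_{\bar\rho}$, bounded by $C(1+|q|^2)$, $C|q|$ and $C$; decomposing $q=\nabla\bar\rho_0+s(\nabla\rho_0-\nabla\bar\rho_0)$ along the integration segment and using $\nabla\bar\rho_0\in L^\infty$ (from \textbf{(H2)}) together with Young's inequality applied to the products $|\rho_0-\bar\rho_0||\nabla\rho_0-\nabla\bar\rho_0|$ yields $\varphi(0)\le C_2\Phi(0)$.

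Next I estimate the right-hand side of \eqref{eq:RelEnKorGenFinalweak}. The convective term is $\le \|\nabla\bar u\|_{L^\infty}\int_{\T^d}\rho|u-\bar u|^2\,dx\le C\varphi(\tau)$. For the stress contributions, the constituents \eqref{stressdeffunc} of \eqref{stresskort} for the energy \eqref{hyposp} are $s(\rho,q)=p(\rho)+\tfrac12(\rho\kappa'(\rho)+\kappa(\rho))|q|^2$, $r(\rho,q)=\rho\kappa(\rho)q$, $H(\rho,q)=\kappa(\rho)\,q\otimes q$; I write the relative quantities $s(\cdot|\cdot)$, $r(\cdot|\cdot)$, $H(\cdot|\cdot)$ as integral remainders of their first-order Taylor polynomials. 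Their $(\rho,q)$-Hessians are bounded for $\rho\in[\rho_{min},\rho_{max}]$ except for polynomial growth in $|q|$ — at most $|q|^2$ in the $\rho\rho$-block, at most $|q|$ in the $\rho q$-block, bounded in the $qq$-block. Writing $q=\nabla\bar\rho+s(\nabla\rho-\nabla\bar\rho)$ along the segment and distributing the $L^\infty$ bounds for $\rho$, $\bar\rho$, $\nabla\bar\rho$, applying Young's inequality to the mixed products $|\rho-\bar\rho||\nabla\rho-\nabla\bar\rho|$ so that no uncontrolled power of $\|\nabla\rho\|$ appears, one obtains the pointwise-in-time bound
\[
\int_{\T^d}\!\big(|s(\cdot|\cdot)|+|r(\cdot|\cdot)|+|H(\cdot|\cdot)|\big)\,dx \ \le\ C\int_{\T^d}\!\big(|\rho-\bar\rho|^2+|\nabla\rho-\nabla\bar\rho|^2\big)\,dx\ \le\ C\,\Phi(\tau),
\]
with $C$ depending on $\rho_{min},\rho_{max}$, $\kappa$ and $\|\nabla\bar\rho\|_{L^\infty}$. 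Since $\div\bar u$, $\nabla\bar u$ and $\nabla\div\bar u$ lie in $L^\infty$ by \textbf{(H2)}, each stress term of \eqref{eq:RelEnKorGenFinalweak} is controlled by $C\Phi(\tau)$.

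Putting the pieces together, \eqref{eq:RelEnKorGenFinalweak} gives $\varphi(t)\le\varphi(0)+C\int_0^t\Phi(\tau)\,d\tau$; using $\Phi\le C_1\varphi$ and $\varphi(0)\le C_2\Phi(0)$ this becomes $\Phi(t)\le C_1C_2\,\Phi(0)+C_1C\int_0^t\Phi(\tau)\,d\tau$, and Gronwall's inequality produces \eqref{eq:finalKortconst2}; the case $\Phi(0)=0$ then gives \eqref{eq:weakstronguniqu2}. I expect the main obstacle to be the second step above: controlling the relative stress functions $s(\cdot|\cdot)$, $H(\cdot|\cdot)$, $r(\cdot|\cdot)$ by $\Phi$ even though $\nabla\rho$ is only an $L^2$ function, which forces one to track precisely how the available $L^\infty$ bounds (on $\rho$, $\bar\rho$, $\nabla\bar\rho$) are spent and to use Young's inequality on the mixed gradient--density products rather than crude H\"older estimates. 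It is exactly the two-sided boundedness $\rho_{min}\le\rho\le\rho_{max}$ (absent in Theorem \ref{th:finalKortconst}) together with $\bar\rho\in M_{\bar\rho}$ that makes these estimates go through.
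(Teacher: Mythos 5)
Your proposal is correct and follows essentially the same route as the paper: the relative energy inequality of Theorem \ref{theo:relenweak}, uniform convexity from Lemma \ref{lem:Funifconvex}(ii) to make the relative energy equivalent to $\Phi$, the $L^\infty$ bounds on $\rho$ and $\nabla\bar\rho$ to control the relative stress functions $s(\cdot|\cdot)$, $H(\cdot|\cdot)$, $r(\cdot|\cdot)$ quadratically, and Gronwall. The only (harmless) stylistic difference is that you bound the relative quantities via integral Taylor remainders of their Hessians, whereas the paper expands them explicitly term by term; you also make explicit the upper bound $\varphi(0)\le C\,\Phi(0)$, which the paper leaves implicit.
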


\begin{proof}
First, note that \eqref{hyposp}-\eqref{hyp4u} guarantee sufficient regularity for the weak solution $(\rho,m)$ 
so that \eqref{smoothnessass} holds and we may use \eqref{eq:RelEnKorGenFinalweak}  in  Theorem \ref{theo:relenweak}. 
In view of Lemma \ref{lem:Funifconvex},  there exists $\alpha >0$ such that 
\begin{equation*}
 \int_{\T^d}\left. \left ( \frac{1}{2} \rho \left | \frac{m}{\rho} -  \frac{\bar m}{\bar\rho}\right | ^2 +  F(\rho, \nabla\rho\left | \bar \rho, \nabla\bar \rho \right. ) \right )dx \right |_t \geq \alpha \Phi(t)\, ,
\end{equation*}
where $\Phi(t)$ is given in \eqref{eq:normfin}.

We proceed to bound the right hand side of  the relative energy inequality  \eqref{eq:RelEnKorGenFinalweak} in terms of $\Phi(t)$.
Using \eqref{stressdeffunc} and \eqref{hyposp} we compute
\begin{align*}
 H(\rho , q) &= \kappa(\rho) q \otimes q\, ,
 \\
 s(\rho , q) & = p (\rho) +   A(\rho) |q|^2 \, ,  \quad \mbox{where $A(\rho) = \tfrac{1}{2} (\rho \kappa'(\rho) + \kappa (\rho) )$}  \, ,
 \\
 r(\rho , q) &=  B(\rho)   q   \, ,    \qquad \qquad \mbox{where $B(\rho) = \rho \kappa(\rho) $}     \, ,
\end{align*}
and the associated relative functions
\begin{align*}
s ( \rho, q | \bar \rho , \bar q) &=  p (\rho | \bar \rho) + A( \rho | \bar \rho ) | \bar q|^2 + A(\rho) | q - \bar q|^2 + ( A(\rho) - A (\bar \rho) ) 2 \bar q \cdot (q - \bar q)\, ,
\\
H ( \rho, q | \bar \rho , \bar q)  &=  \kappa (\rho) (q - \bar q) \otimes (q - \bar q) + \kappa ( \rho | \bar \rho ) \bar q \otimes \bar q + 
 ( \kappa (\rho) - \kappa  (\bar \rho) ) \big [ \bar q \otimes (q - \bar q) + (q - \bar q) \otimes \bar q \big ]\, ,
 \\
 r ( \rho, q | \bar \rho , \bar q) &= B ( \rho | \bar \rho ) \bar q  +  ( B (\rho) - B (\bar \rho) ) (q - \bar q)\, .
\end{align*}

Since $\rho$  satisfies $0 < \rho_{min} \leq \rho(x,t)\leq \rho_{max}$
for a.e.\ $(x,t)$,  we have that 
$$
\begin{aligned}
\|\kappa(\rho)\|_{\infty}\leq \kappa_\infty=\sup_{\xi\in[\rho_{min} ,\rho_{max}]}|\kappa(\xi)|\, ,
 \\
\|  A(\rho) \|_{\infty}\leq A_\infty = \sup_{\xi\in[\rho_{min} ,\rho_{max}]}|A (\xi)|\, .
\end{aligned}
$$
Thus we can estimate the quadratic terms on the right hand side of \eqref{eq:RelEnKorGenFinalweak} via 
\begin{equation*}
\big | H(\rho, \nabla\rho\left | \bar \rho, \nabla\bar \rho \right. ) \big |, \
|s(\rho, \nabla\rho\left | \bar \rho, \nabla\bar \rho \right. )|, \
\big | r(\rho, \nabla\rho\left | \bar \rho, \nabla\bar \rho \right. ) \big |
 \leq C   |\rho - \bar\rho|^2 + C |\nabla\rho - \nabla\bar\rho|^2,\\ 
\end{equation*}
where the constant $C$ depends solely on the smooth functions $h$ and $\kappa$, the uniform bounds of $\rho$  (that is $\rho_{min}$, $ \rho_{max}$), and  $\bar u$ and its derivatives. Finally, as in the previous framework, the  right hand side of \eqref{eq:RelEnKorGenFinalweak} is bounded in terms of $\Phi$ and thus the Gronwall Lemma gives the desired result.
\end{proof}

\subsection{Stability for the quantum hydrodynamics system}
The system of quantum hydrodynamics \eqref{qhd} has a special structure that allows to obtain stability estimates for a weak solution
that is not necessarily bounded. Consider the energy
\begin{equation}  
F(\rho , q) = h(\rho)  + \frac{1}{2 \rho} |q|^2  ,
\label{qhdintpot}
\end{equation}
associated to the quantum hydrodynamics system \eqref{qhd} (where we took $\eps = 2$  in \eqref{qhdenergy} for concreteness).
We assume that $h(\rho)$ and $p(\rho)$ are connected through \eqref{hypthermo} and satisfy \eqref{eq:growthh} and \eqref{hyppress}.
The relative  potential energy reads
$$
F(\rho , q | \bar \rho , \bar q ) =  h (\rho | \bar \rho) +  \frac{1}{2} \rho \Big |  \frac{q}{\rho} - \frac{ \bar q}{ \bar \rho} \Big | ^2 .
$$

For the energy \eqref{qhdintpot}, the functions determining the stress $S$ defined  by \eqref{stressdeffunc} , read
$$
\begin{aligned}
s(\rho, q) &= p(\rho) + \tfrac{1}{2} ( \rho \kappa' (\rho) + \kappa (\rho) ) |q|^2 = p(\rho)  \, ,
\\
r(\rho, q) &= q\, ,
\\
H(\rho , q) &= \frac{1}{\rho} q \otimes q\, ,
\end{aligned}
$$
and thus
\begin{equation}
\label{relfcalc}
\begin{aligned}
s ( \rho, q | \bar \rho , \bar q) &=  p  (\rho | \bar \rho) \, ,
\\
H ( \rho, q | \bar \rho , \bar q)  &=  \rho  \Big ( \frac{q}{\rho}  - \frac{\bar q}{\bar \rho}  \Big ) \otimes \Big ( \frac{q}{\rho}  - \frac{\bar q}{\bar \rho}  \Big ) \, ,
 \\
 r ( \rho, q | \bar \rho , \bar q) &= 0\, .
 \end{aligned}
\end{equation}

Consider now a weak solution $(\rho, u)$ of the quantum hydrodynamics system \eqref{qhd} satisfying {\rm \textbf{(H1)}}.
By \eqref{hypCauchyK1}, \eqref{hypCauchyK2} and \eqref{qhdintpot}
$$
\int  | \nabla \rho | dx \le \int \rho dx + \int \frac{1}{\rho} | \nabla \rho |^2 dx < \infty
$$
so that \eqref{smoothnessass} is satisfied. 
 Due to \eqref{eq:hyppress}, $|p(\rho | \bar \rho)| \le C h(\rho | \bar \rho)$. 
We may thus use the total relative energy
\begin{equation}
\label{eq:normrelqhd}
\Psi (t) = \int_{\T^d}  \Big ( \frac{1}{2} \rho \left | \frac{m}{\rho} -  \frac{\bar m}{\bar\rho}\right | ^2 +  h(\rho | \bar \rho)  
+ \frac{1}{2} \rho  \Big |  \frac{\nabla \rho }{\rho} - \frac{ \nabla \bar \rho }{ \bar \rho} \Big | ^2 \Big ) (x, t)  dx  
\end{equation}
as a yardstick to estimate the distance of two solutions.
Using  \eqref{qhdintpot}, \eqref{relfcalc} and \eqref{eq:hyppress}, the relative energy inequality \eqref{eq:RelEnKorGenFinalweak} 
implies
$$
\Psi (t) \le \Psi (0) + C \int_0^t \Psi (\tau) \, d\tau
$$
with $\Psi(t)$ as in \eqref{eq:normrelqhd}. We conclude:

\begin{theorem}\label{th:finalstabqhd}
Consider the energy \eqref{qhdintpot} with $h(\rho)$ satisfying \eqref{hypthermo}, \eqref{eq:growthh} and \eqref{hyppress}.
Let  $(\rho, m)$ and $(\brho, \bar m)$ be solutions of \eqref{qhd} satisfying the hypotheses {\rm \textbf{(H1)}} and  {\rm \textbf{(H2)}} 
respectively for some $T > 0$.
Then, the stability estimate holds
\begin{equation}
\Psi(t) \leq C \Psi(0) , \quad t\in [0,T] \, , 
\label{eq:finalqhd}
\end{equation}
where $C$ is a positive constant depending only on $T$, $\baru$ and its derivatives up to second order.
In particular, if $\Psi(0)=0$, then
\begin{equation}
\sup_{t\in[0,T]}\Psi(t) = 0\, ,
\label{eq:weakstrongqhd}
\end{equation}
which implies weak--strong uniqueness for the model and the framework of solutions under consideration.
\end{theorem}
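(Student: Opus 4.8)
The plan is to specialize the weak--strong relative energy identity of Theorem~\ref{theo:relenweak} to the quantum hydrodynamics energy \eqref{qhdintpot} and then close a Gronwall estimate in the functional $\Psi$ of \eqref{eq:normrelqhd}. First I would verify the hypotheses of Theorem~\ref{theo:relenweak}: \textbf{(H1)} and \textbf{(H2)} are assumed, while the regularity \eqref{smoothnessass} follows from the energy bound \eqref{hypCauchyK2}, since for the QHD potential $\int_{\T^d}|\nabla\rho|\,dx\le\int_{\T^d}\rho\,dx+\int_{\T^d}\tfrac{1}{\rho}|\nabla\rho|^2\,dx$ by Young's inequality, and both terms on the right are controlled by \eqref{hypCauchyK1} and \eqref{hypCauchyK2}. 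Thus \eqref{eq:RelEnKorGenFinalweak} is available.

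Next I would substitute the relative stress functions \eqref{relfcalc} into \eqref{eq:RelEnKorGenFinalweak}. The structural simplification special to the capillarity $\kappa(\rho)=\tfrac1\rho$ (corresponding to $\eps=2$) is that $\rho\kappa'(\rho)+\kappa(\rho)=0$, so $r(\rho,\nabla\rho\,|\,\bar\rho,\nabla\bar\rho)\equiv0$ and the term carrying $\nabla\dx(\bar m/\bar\rho)$ disappears, while $s(\rho,\nabla\rho\,|\,\bar\rho,\nabla\bar\rho)=p(\rho\,|\,\bar\rho)$ and $H(\rho,\nabla\rho\,|\,\bar\rho,\nabla\bar\rho)=\rho\big(\tfrac{\nabla\rho}{\rho}-\tfrac{\nabla\bar\rho}{\bar\rho}\big)\otimes\big(\tfrac{\nabla\rho}{\rho}-\tfrac{\nabla\bar\rho}{\bar\rho}\big)$. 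The right-hand side of \eqref{eq:RelEnKorGenFinalweak} then consists of three terms, each of which I would bound pointwise by a constant times the integrand of $\Psi$: the convective term by $\|\nabla\baru\|_\infty\,\rho|u-\baru|^2$; the term $(\dx\baru)\,p(\rho\,|\,\bar\rho)$ by $\|\dx\baru\|_\infty\,|p(\rho\,|\,\bar\rho)|\le C\,h(\rho\,|\,\bar\rho)$ thanks to Lemma~\ref{lem:generalconvEuler3d}(b) (valid since \eqref{hypthermo} and \eqref{hyppress} are assumed); and $\nabla\baru:H(\rho,\nabla\rho\,|\,\bar\rho,\nabla\bar\rho)$ by $\|\nabla\baru\|_\infty\,\rho\big|\tfrac{\nabla\rho}{\rho}-\tfrac{\nabla\bar\rho}{\bar\rho}\big|^2$. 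Collecting these bounds gives
\begin{equation*}
\Psi(t)\le\Psi(0)+C\int_0^t\Psi(\tau)\,d\tau,\qquad t\in[0,T],
\end{equation*}
with $C$ depending only on $T$ and on $\baru$ and its derivatives up to second order; Gronwall's lemma then produces \eqref{eq:finalqhd}.

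For the uniqueness assertion I would observe that $\Psi(0)=0$ forces $\Psi(t)=0$ on $[0,T]$; since \eqref{hypthermo} gives $h''=p'/\rho>0$, the function $h$ is strictly convex, so $h(\rho\,|\,\bar\rho)=0$ a.e.\ implies $\rho=\bar\rho$ a.e., after which the vanishing of the relative kinetic energy yields $m=\bar m$, which is \eqref{eq:weakstrongqhd}. The genuinely substantive ingredient is the relative energy identity \eqref{eq:RelEnKorGenFinalweak}, already supplied by Theorem~\ref{theo:relenweak}; downstream, the only delicate point is that, unlike in Theorems~\ref{th:finalKortconst} and~\ref{th:finalKortconst2}, the QHD relative stress terms are controlled \emph{without} any $L^\infty$ bound on $\rho$ --- this rests on the vanishing of $r$ and on $H(\rho,\nabla\rho\,|\,\bar\rho,\nabla\bar\rho)$ being a perfect ``relative square.'' I expect no real obstacle beyond checking \eqref{smoothnessass} and exploiting these structural facts.
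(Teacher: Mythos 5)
Your proposal is correct and follows essentially the same route as the paper: it specializes Theorem \ref{theo:relenweak} to the energy \eqref{qhdintpot}, verifies \eqref{smoothnessass} from \eqref{hypCauchyK1}--\eqref{hypCauchyK2} by the same Young-type inequality, exploits that the relative $r$ vanishes, that the relative $H$ is the perfect square $\rho\big|\tfrac{\nabla\rho}{\rho}-\tfrac{\nabla\bar\rho}{\bar\rho}\big|^2$, and that $|p(\rho|\bar\rho)|\le A\,h(\rho|\bar\rho)$ by Lemma \ref{lem:generalconvEuler3d}(b), and then closes with Gronwall. (One small wording slip: the vanishing of the relative $r$ comes from $\rho\kappa(\rho)\equiv 1$ making $r(\rho,q)=q$ linear, while $\rho\kappa'(\rho)+\kappa(\rho)=0$ is what reduces $s$ to $p(\rho)$; the conclusions you draw are nonetheless exactly those of \eqref{relfcalc}.)
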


\section{Stability estimates for non-convex energies}\label{sec:cd}
In this section we study the model \eqref{eq:Kortcap2} on $ (0,T) \times \T^3$
in case the local part $h(\rho)$ of the internal energy is non-convex.
Subsequently, we assume $h \in C^3((0,\infty),[0,\infty)),$
but  no convexity of $h.$
We will see that the higher order terms compensate for the non-convex $h,$
in the sense that we are still able to use (a modified version of) the relative energy to obtain 
continuous dependence on initial data.
To simplify the analysis we restrict ourselves to the case that $\kappa(\rho)=C_\kappa >0$.
We are convinced that analogous results also hold for $\rho$ dependent capillarity in 
case $\kappa$ is bounded from above and below.

A different approach to stability (and also existence) results for the Euler-Korteweg model can be found in \cite{BDD07} where well-posedness was shown in all Sobolev spaces of supercritical index.
The results in \cite{BDD07} hold for quite general capillarities and in their analysis $\rho\kappa(\rho)=\text{const}$ is a particular, simple case.
The strategy employed there is similar to our approach in that it uses the sum of kinetic and capillary energy as the key part of the energy functional while pressure terms are treated as source terms.
The analysis in \cite{BDD07}
is based on  rewriting the problem as a complex-valued second order system of non-dissipative conservation laws for $u$ and $w := (\kappa(\rho)/\rho)^{\frac{1}{2}} \nabla \rho$.
In contrast, our results are based on the relative entropy framework and a subsequent step in which pressure terms are removed. 
We hope that, thus, our framework can be extended to cover the comparison of weak and strong solutions in future works.

\subsection{Assumptions}
We are not (yet) able to carry out the subsequent analysis for weak solutions and, thus, we 
 will consider strong solutions of \eqref{eq:Kortcap2} with $\kappa(\rho)=C_\kappa$ in the spaces
\begin{equation}\label{ass11}
 \begin{split}
  \rho &\in C^0([0,T],C^3(\T^3,\mathbb{R}_+)) \cap C^{\frac{3}{2}}((0,T),C^1(\T^3,\mathbb{R}_+))\, ,\\
   u &\in C^0([0,T], C^1(\T^3,\mathbb{R}^3)) \cap C^1((0,T),C^0(\T^3,\mathbb{R}^3))\, .
 \end{split}
\end{equation}
We will compare solutions $(\rho, u)$ and $(\bar{\rho},\bar{u}) $ 
corresponding to initial data 
$(\rho_0, {u}_0)$ and $(\bar{\rho}_0,\bar u_0) ,$ respectively, by relative energy.

To this end, we need to assume some uniform bounds. In particular, vacuum has to be avoided uniformly.
\begin{assumption}[Uniform bounds]
 We assume
that there are constants $C_\rho,c_\rho,c_m >0$ such that
\begin{equation}\label{ass12}
 \begin{split}
  c_\rho &\leq \rho(t, x), \bar \rho(t, x) \leq C_\rho  \quad \forall \, t \in [0,T],\,  x \in \T^3,\\
 c_m &\geq \max \big\{ \Norm{  m}_{L^\infty((0,T)\times \T^3)} , \Norm{\bar{ m}}_{L^\infty((0,T)\times \T^3)}   \big\}\, ,
 \end{split}
\end{equation}
and define the following constants:
$ p_M := \Norm{p(\rho)}_{C^2([c_\rho,C_\rho])},\ h_M := \Norm{h(\rho)}_{C^3([c_\rho,C_\rho])}.$
\end{assumption}


\subsection{Continuous dependence on initial data}
Let us state the particular form of the relative energy and relative energy flux between solutions $(\rho, m)$ and $(\bar{\rho},\bar{ m}) $ of \eqref{eq:Kortcap2}
with the choice $F(\rho,q)=h(\rho)+ C_\kappa |q|^2.$
For notational convenience, we omit capillarity related contributions in the relative energy flux:
\begin{equation}\label{def:re2NSK}
\begin{split}
  \eta \Big ( 
           \rho, { m}    
              |
            \bar \rho, \bar m
           \Big)  &=  h(\rho) -h(\bar \rho)-h'(\bar \rho) ( \rho - \bar \rho)
              + \frac{C_\kappa}{2}\norm{\nabla (\rho - \bar \rho)}^2 + \frac{\rho}{2} | u - \bar u|^2, \\
 q \Big (
           \rho,m   
             |
            \bar \rho, \bar m
           \Big) &=  m h'( \rho) + \frac{\norm{ m}^2}{2 \rho^2}  m  
 - m h'(\bar \rho) +\frac{ \norm{ \bar m}^2}{2\bar \rho^2}\bar m
 - \frac{\bar m \cdot m}{\bar \rho \rho}  m + \frac{p(\rho)}{\bar\rho}\bar m - 
\frac{p(\bar \rho)}{\bar \rho} \bar m\, .
\end{split}
\end{equation}

The main challenge we are faced with in this chapter is the non-convexity of $h$ which ensues that 
the relative energy is not suitable for measuring the distance between solutions.
However, if $h$ vanished, the relative energy would be suitable for controlling the difference between solutions.
To be more precise, the relative kinetic energy allows us to control $u - \bar u$ and $\int |\nabla(\rho- \bar \rho)|^2 $ 
is equivalent to the squared $H^1$ distance between $\rho$ and $\bar \rho$ provided $\rho, \bar \rho$ have the same mean value, due to Poincar{\'e}'s inequality.
We restrict ourselves to the case
\begin{equation}\label{mean} \int_{\T^3}( \rho_0 - \bar \rho_0) \operatorname{d}  x =0\end{equation}
such that Poincar{\'e}'s inequality is applicable.
Therefore, we like to introduce the following quantity, which is part of the relative energy:
\begin{equation}\label{def:rreNSK}
  \eta_{\mathrm{R}}\Big ( 
             \rho,  m    
            |
            \bar \rho, \bar m
           \Big) :=  \frac{C_\kappa}{2}\norm{\nabla (\rho- \bar \rho)}^2
+ k (  \rho, m  |   \bar  \rho,  \bar m  ) \, ,
\end{equation}
where $k(\rho, m |  \brho, \barm)$  is the density of the relative kinetic energy $K(\rho, m | \brho, \barm)$
introduced in \eqref{relkinenergy}.
We call $  \eta_{\mathrm{R}}$ the {\it reduced relative energy}.

Due to the properties of the relative kinetic energy we obtain two estimates for the reduced relative energy
\begin{equation}\label{eq:er1}
\begin{split}
  \int_{\T^3} \eta_{\mathrm{R}}\Big ( \rho, m | \bar \rho, \bar m\Big) \operatorname{d}  x&\geq  \frac{C_\kappa}{2 } |\bar \rho- \rho|_{H^1}^2  
              +\frac{c_\rho}{2} \| u - \bar u\|_{L^2}^2 ,\\
\int_{\T^3}\eta_{\mathrm{R}}\Big (  \rho, m | \bar \rho, \bar m\Big)\operatorname{d}  x &\geq \frac{C_\kappa}{4} |\bar \rho-  \rho|_{H^1}^2 
+\frac{C_\kappa c_\rho^2}{8 C_P c_m^2} \|\bar{ m} -  m\|_{L^2}^2.
\end{split}
\end{equation}
where $\norm{\cdot}_{H^1}$ denotes the $H^1$-semi-norm, i.e., $\norm{\rho}_{H^1}:=\norm{\nabla \rho}_{L^2},$ and $C_P$ is the Poincar\'{e} constant on $\T^3.$

Based on  the  relative energy and the relative energy flux we can make the computations for the general case, given in \eqref{reltotekort}, more specific:
\begin{lemma}[Rate of the relative energy]\label{lem:freNSK}
Let $T>0$ be given and let $(\rho, m)$ and $(\bar{\rho},\bar{ m}) $ be strong solutions of \eqref{eq:Kortcap2},
with $\kappa(\rho)=C_\kappa>0,$ corresponding to initial data 
$(\rho_0, m_0)$ and $(\bar{\rho}_0,\bar{ m}_0) ,$ respectively.
Let \eqref{ass12} and \eqref{mean}
 hold.
Then, 
 the rate (of change) of the relative energy defined in \eqref{def:re2NSK} satisfies 
 \begin{equation}
 \frac{\operatorname{d}}{\operatorname{d} t} \int_{\T^3}\eta\Big( \rho, m | \bar \rho, \bar m\Big)\operatorname{d}  x = \int_{\T^3}\sum_{i=1}^4 A_i \operatorname{d}  x \, ,
\end{equation}
with
\begin{equation}
 \begin{split}
 A_1&:=-C_\kappa\frac{\bar m}{\bar \rho} \cdot  (\bar \rho - \rho) \Big (  \nabla \triangle(\bar \rho -  \rho)  \Big )\, , 
\\ 
  A_2&:=\frac{\nabla \bar \rho}{\bar \rho}\otimes \bar u : \Big(  (\bar u - u) \otimes ( \rho \bar u -  \rho u) 
+ \frac{1}{3} \mathbb{I} \big( p(\rho) - p(\bar \rho) - p'(\bar \rho)(\rho - \bar \rho)\big) \Big)\, ,
\\
  A_3&:=\frac{1}{\bar \rho} \nabla \bar m : ( \rho u - \rho  \bar u) \otimes ( \bar u - u)\, ,
\\
  A_4&:=-\frac{\div ( \bar m)}{\bar \rho} \big( p( \rho) - p(\bar \rho) - p'(\bar \rho) (\rho -\bar \rho)
\big)\, .
 \end{split}
\end{equation}
\end{lemma}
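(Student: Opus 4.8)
The plan is to specialise the general relative--energy identity \eqref{reltotekort} (with $\zeta=0$) to the Korteweg potential $F(\rho,q)=h(\rho)+\tfrac{C_\kappa}{2}|q|^2$ and to reorganise its right--hand side into $A_1,\dots,A_4$. Since both $(\rho,u)$ and $(\bar\rho,\bar u)$ belong to the class \eqref{ass11}, the formal computation of Section \ref{sec:ham} --- equivalently the direct derivation of Appendix \ref{subsec:relenKort} --- applies rigorously: the regularity $\rho,\bar\rho\in C^0([0,T],C^3)$ and $u,\bar u\in C^0([0,T],C^1)\cap C^1((0,T),C^0)$ makes every space-- and time--integration by parts legitimate, a strong solution is in particular conservative so the energy balances are equalities, and $\eta(\rho,m\,|\,\bar\rho,\bar m)$ of \eqref{def:re2NSK} is exactly $F(\rho,\nabla\rho\,|\,\bar\rho,\nabla\bar\rho)+\tfrac12\rho|u-\bar u|^2$. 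For the constituents of the Korteweg stress \eqref{stresskort} one reads off, as in the proof of Theorem \ref{th:finalKortconst}, $H(\rho,q)=C_\kappa\,q\otimes q$, $s(\rho,q)=p(\rho)+\tfrac{C_\kappa}{2}|q|^2$, $r(\rho,q)=C_\kappa\rho q$, and hence the relative functions $H(\rho,q\,|\,\bar\rho,\bar q)=C_\kappa(q-\bar q)\otimes(q-\bar q)$, $s(\rho,q\,|\,\bar\rho,\bar q)=p(\rho\,|\,\bar\rho)+\tfrac{C_\kappa}{2}|q-\bar q|^2$, $r(\rho,q\,|\,\bar\rho,\bar q)=C_\kappa(\rho-\bar\rho)(q-\bar q)$. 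Writing $\sigma:=\bar\rho-\rho$ and inserting these into \eqref{reltotekort} exhibits $\frac{d}{dt}\int_{\T^3}\eta\,dx$ as the sum of a pressure term $-\int(\div\bar u)\,p(\rho\,|\,\bar\rho)\,dx$, a convective term $-\int\rho\,\nabla\bar u:(u-\bar u)\otimes(u-\bar u)\,dx$, and a capillary block $-C_\kappa\int\big[\tfrac12(\div\bar u)|\nabla\sigma|^2+\nabla(\div\bar u)\cdot\sigma\nabla\sigma+\nabla\bar u:\nabla\sigma\otimes\nabla\sigma\big]\,dx$.

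First I would treat the capillary block. Two successive integrations by parts on the torus, moving all derivatives off $\bar u$ and onto $\sigma$, yield
\[
-C_\kappa\int_{\T^3}\Big[\tfrac12(\div\bar u)|\nabla\sigma|^2+\nabla(\div\bar u)\cdot\sigma\nabla\sigma+\nabla\bar u:\nabla\sigma\otimes\nabla\sigma\Big]dx
=-C_\kappa\int_{\T^3}\frac{\bar m}{\bar\rho}\cdot\sigma\,\nabla\triangle\sigma\,dx=\int_{\T^3} A_1\,dx ,
\]
the regularity $\rho,\bar\rho\in C^3$ being exactly what is consumed in this step. Next I would treat the pressure and convective terms by trading velocity gradients for momentum gradients: from $\bar m=\bar\rho\bar u$ one has the chain rule $\nabla\bar m=\bar\rho\,\nabla\bar u+\nabla\bar\rho\otimes\bar u$ and its trace $\div\bar m=\bar\rho\,\div\bar u+\bar u\cdot\nabla\bar\rho$, so that $\div\bar u$ and $\nabla\bar u$ are replaced by $\bar\rho^{-1}\div\bar m$, $\bar\rho^{-1}\nabla\bar m$ and correction terms carrying $\bar\rho^{-1}\nabla\bar\rho$. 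Substituting and regrouping --- using where convenient that exact spatial divergences integrate to zero on $\T^3$ --- converts the convective term and the pressure term into $A_2+A_3+A_4$, whence $\frac{d}{dt}\int_{\T^3}\eta\,dx=\int_{\T^3}(A_1+A_2+A_3+A_4)\,dx$. Alternatively one may simply rerun the pointwise computation of Appendix \ref{subsec:relenKort} with $\kappa\equiv C_\kappa$; this produces at once a local balance $\partial_t\eta+\div\big(q+q_{\mathrm{cap}}\big)=\sum_i A_i$, with $q$ as in \eqref{def:re2NSK} and $q_{\mathrm{cap}}$ the omitted capillary flux, and the asserted identity is its integral over $\T^3$.

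I expect the genuinely delicate point to be neither the abstract identity \eqref{reltotekort} nor the bookkeeping per se, but rather the capillary integration by parts: one has to recognise that the three ``quadratic in $\nabla\sigma$'' capillary contributions collapse into the single third--order expression $\tfrac{\bar m}{\bar\rho}\cdot\sigma\,\nabla\triangle\sigma$, and one has to keep careful track of the (non--unique) exact--divergence terms that get shuffled between flux and source when passing from the grouping natural to \eqref{reltotekort} to the grouping $A_1,\dots,A_4$. Finally, I note that the uniform bounds \eqref{ass12} and the equal--mass condition \eqref{mean} are not used in this lemma itself; they are stated because they are needed in the estimate immediately following, where $\int_{\T^3}\sum_i A_i\,dx$ is bounded above in terms of the reduced relative energy $\eta_{\mathrm{R}}$ by means of Poincar\'e's inequality.
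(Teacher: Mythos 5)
Your proposal is correct and follows essentially the route the paper intends: the lemma is presented there precisely as the specialization of the general identity \eqref{reltotekort} to $F(\rho,q)=h(\rho)+\tfrac{C_\kappa}{2}|q|^2$, and your two key reorganization steps — the double integration by parts collapsing the three capillary contributions into $-C_\kappa\,\bar u\cdot\sigma\nabla\triangle\sigma$ (the same identity the paper uses in reverse in \eqref{est:a2N}), and the substitution $\nabla\bar u=\bar\rho^{-1}\nabla\bar m-\bar\rho^{-1}\nabla\bar\rho\otimes\bar u$ producing $A_2+A_3+A_4$ — are exactly the computations underlying the stated decomposition. (Your remark that \eqref{mean} is not needed for the identity itself is also accurate; only the positivity of $\bar\rho$ from \eqref{ass12} is consumed in dividing by $\bar\rho$.)
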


Next, we determine the rate of the reduced relative energy based on Lemma \ref{lem:freNSK}.
\begin{lemma}[Rate of the reduced relative energy]\label{lem:rreNSK}
Let the assumptions of Lemma \ref{lem:freNSK} be satisfied.
Then, 
 the rate of the reduced relative energy  defined in \eqref{def:rreNSK} satisfies 
 \begin{equation}
 \frac{\operatorname{d}}{\operatorname{d} t} \int_{\T^3}\eta_{\mathrm{R}}\Big(
            \rho,   m  
             |
          \bar    \rho, \bar  m
       \Big) \operatorname{d}  x = \int_{\T^3}\sum_{i=1}^6 A_i \operatorname{d} x \, ,
\end{equation}
where $A_1,\dots,A_4$ are as in Lemma \ref{lem:freNSK} and
\begin{equation}
 \begin{split}
  A_5&:=\div (\bar m) \big( h'(\rho) - h'(\bar \rho) - h''(\bar \rho)( \rho-\bar \rho) \big)\, ,
\\
  A_6&:= \big( h'( \rho) - h'(\bar \rho)\big) \big( \div(m) - \div(\bar m) \big)\, .
 \end{split}
\end{equation}
\end{lemma}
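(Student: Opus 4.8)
The plan is to observe that the reduced relative energy differs from the full relative energy treated in Lemma \ref{lem:freNSK} only by the relative internal energy of the local part. Writing $h(\rho\,|\,\bar\rho):=h(\rho)-h(\bar\rho)-h'(\bar\rho)(\rho-\bar\rho)$, the definitions \eqref{def:re2NSK} and \eqref{def:rreNSK} give, pointwise,
\[
 \eta\Big(\rho,m\,\big|\,\bar\rho,\bar m\Big)=\eta_{\mathrm{R}}\Big(\rho,m\,\big|\,\bar\rho,\bar m\Big)+h(\rho\,|\,\bar\rho).
\]
Hence it suffices to compute $\frac{\operatorname{d}}{\operatorname{d}t}\int_{\T^3}h(\rho\,|\,\bar\rho)\operatorname{d}x$ and to check that its negative equals $\int_{\T^3}(A_5+A_6)\operatorname{d}x$; subtracting this from the identity of Lemma \ref{lem:freNSK} then yields the claim.

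First I would differentiate under the integral sign — legitimate thanks to the regularity \eqref{ass11}, which gives $\rho,\bar\rho$ of class $C^1$ in time with values in $C^0(\T^3)$ — and apply the chain rule (using $h\in C^3$) to get
\[
 \frac{\operatorname{d}}{\operatorname{d}t}\int_{\T^3}h(\rho\,|\,\bar\rho)\operatorname{d}x=\int_{\T^3}\Big[h'(\rho)\rho_t-h'(\bar\rho)\bar\rho_t-h''(\bar\rho)\bar\rho_t(\rho-\bar\rho)-h'(\bar\rho)(\rho_t-\bar\rho_t)\Big]\operatorname{d}x.
\]
The two terms proportional to $h'(\bar\rho)\bar\rho_t$ cancel, leaving the integrand $(h'(\rho)-h'(\bar\rho))\rho_t-h''(\bar\rho)(\rho-\bar\rho)\bar\rho_t$. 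Inserting the continuity equations $\rho_t=-\div m$, $\bar\rho_t=-\div\bar m$ from \eqref{eq:Kortcap2} gives
\[
 \frac{\operatorname{d}}{\operatorname{d}t}\int_{\T^3}h(\rho\,|\,\bar\rho)\operatorname{d}x=-\int_{\T^3}\Big[(h'(\rho)-h'(\bar\rho))\div m-h''(\bar\rho)(\rho-\bar\rho)\div\bar m\Big]\operatorname{d}x.
\]

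It remains to recognise the integrand on the right as $A_5+A_6$. Expanding $A_5+A_6$ from the statement, the contributions $\pm(h'(\rho)-h'(\bar\rho))\div\bar m$ cancel and one is left with exactly $(h'(\rho)-h'(\bar\rho))\div m-h''(\bar\rho)(\rho-\bar\rho)\div\bar m$; thus $-\frac{\operatorname{d}}{\operatorname{d}t}\int_{\T^3}h(\rho\,|\,\bar\rho)\operatorname{d}x=\int_{\T^3}(A_5+A_6)\operatorname{d}x$, and combining with Lemma \ref{lem:freNSK} produces $\frac{\operatorname{d}}{\operatorname{d}t}\int_{\T^3}\eta_{\mathrm{R}}=\int_{\T^3}\sum_{i=1}^6 A_i$.

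Because both solutions are strong with the regularity \eqref{ass11}, every step above is an elementary identity, so there is no genuine analytic obstacle: the only points needing a word of justification are differentiation under the integral and the chain rule for $h'(\bar\rho(t,\cdot))$, both covered by \eqref{ass11} and the $C^3$ assumption on $h$. The real content of the lemma is the algebraic bookkeeping that confines the non-convex local term $h$ to the two explicit error terms $A_5,A_6$, which is precisely what is needed so that in the sequel Poincaré's inequality (via the mean-value condition \eqref{mean}) can be applied to the remaining coercive part $\eta_{\mathrm{R}}$.
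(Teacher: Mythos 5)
Your proof is correct and follows essentially the same route as the paper: both identify $\eta-\eta_{\mathrm{R}}=h(\rho\,|\,\bar\rho)$, differentiate this local relative internal energy in time using the continuity equations $\rho_t=-\div m$, $\bar\rho_t=-\div\bar m$, and recognize the result as $-(A_5+A_6)$ before adding Lemma \ref{lem:freNSK}. The only cosmetic difference is that the paper differentiates via the Taylor integral-remainder representation $h(\rho\,|\,\bar\rho)=\int_{\bar\rho}^{\rho}(\rho-s)h''(s)\,\operatorname{d}s$ whereas you apply the chain rule to the explicit expression; the resulting algebra is identical.
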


\begin{proof}
By definition it holds 
 \begin{equation} (\eta_{\mathrm{R}} - \eta) \Big(
             \rho, {  m}    
              |
           \bar  \rho, \bar m
          \Big) = -h(\rho) + h(\bar \rho) + h'(\bar \rho)( \rho-\bar \rho)
   = - \int_{\bar \rho}^{\rho} ( \rho- s) h''(s)\operatorname{d} s\, .
 \end{equation}
Therefore,
\begin{equation}\label{eq:re4NSK}
 \begin{split}
 \frac{\partial}{\partial t} (\eta_{\mathrm{R}} - \eta) \Big(
          \rho, {  m}    
            |
           \bar  \rho, \bar m
          \Big)
=& \bar \rho_t ( \rho-\bar \rho)h''(\bar \rho) - \int_{\bar \rho}^{\rho}  \rho_t h''(s)\operatorname{d} s\\
=& - \div(\bar m) (\rho-\bar \rho)h''(\bar \rho) + \div({ m}) \int_{\bar \rho}^{\rho} h''(s)\operatorname{d} s\\
=& - \div(\bar m) ( \rho-\bar \rho)h''(\bar \rho) + \div({ m})(h'( \rho)-h'(\bar \rho))\\
=& \div(\bar m) (h'(\rho)-h'(\bar \rho) - (\rho-\bar \rho)h''(\bar \rho)) \\
&+ (h'(\rho)-h'(\bar \rho))(\div({ m} ) - \div (\bar m))=A_5+A_6.
 \end{split}
\end{equation}
The assertion of the lemma follows upon combining \eqref{eq:re4NSK} and Lemma \ref{lem:freNSK}.
\end{proof}

\begin{lemma}[Estimate of  the reduced relative energy rate]\label{lem:destNSK}
 Let the assumptions of Lemma \ref{lem:freNSK} be fulfilled and let the initial data $\rho_0, \bar \rho_0$ satisfy
\eqref{mean}.
Then, there exists a constant $C>0$ depending only on $T, C_\kappa ,\bar \rho_0,\bar u_0, c_\rho, C_\rho, c_m$ such that
the rate of change of the reduced relative energy fulfills
\begin{equation}\label{grre}
 \frac{\operatorname{d}}{\operatorname{d} t} \int_{\T^3} \eta_{\mathrm{R}}\Big(\rho,m|\bar \rho, \bar m\Big) \operatorname{d} x
              \leq C \int_{\T^3} \eta_{\mathrm{R}}\Big(\rho,m|\bar \rho, \bar m\Big) \operatorname{d}  x\, .
\end{equation}
\end{lemma}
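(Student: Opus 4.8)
The plan is to start from the explicit rate formula of Lemma~\ref{lem:rreNSK}, which expresses $\frac{d}{dt}\int_{\T^3}\eta_{\mathrm{R}}(\rho,m\,|\,\brho,\barm)\,dx=\int_{\T^3}\sum_{i=1}^{6}A_i\,dx$, and to bound each $\int_{\T^3}A_i\,dx$ by $C\int_{\T^3}\eta_{\mathrm{R}}\,dx$; then \eqref{grre} follows. The structural fact to exploit is exactly what $\eta_{\mathrm{R}}$ controls: by \eqref{eq:er1} it dominates $\|\nabla(\rho-\brho)\|_{L^2}^2$, $\|u-\baru\|_{L^2}^2$ and $\|m-\barm\|_{L^2}^2$; moreover, since \eqref{mean} together with conservation of mass on $\T^3$ forces $\int_{\T^3}(\rho-\brho)(t,\cdot)\,dx=0$ for every $t$, Poincar\'e's inequality makes it dominate $\|\rho-\brho\|_{L^2}^2$ as well. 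So it suffices to bound $\int_{\T^3}A_i\,dx$ by a constant times $\|\nabla(\rho-\brho)\|_{L^2}^2+\|\rho-\brho\|_{L^2}^2+\|u-\baru\|_{L^2}^2+\|m-\barm\|_{L^2}^2$.

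First I would dispose of $A_2,A_3,A_4,A_5$ by direct pointwise estimates. Each is a product of a coefficient built from the strong solution ($\baru=\barm/\brho$, $\nabla\brho/\brho$, $\nabla\barm/\brho$, $\div\barm/\brho$), bounded in $L^\infty((0,T)\times\T^3)$ by \eqref{ass11}--\eqref{ass12}, with a quadratic term of one of two types: expressions like $(\baru-u)\otimes\rho(\baru-u)$, controlled by $C|u-\baru|^2$ using $\rho\le C_\rho$; or the Taylor remainders $p(\rho)-p(\brho)-p'(\brho)(\rho-\brho)$ and $h'(\rho)-h'(\brho)-h''(\brho)(\rho-\brho)$, controlled by $\tfrac12 p_M|\rho-\brho|^2$ and $\tfrac12 h_M|\rho-\brho|^2$ since $p\in C^2$ and $h\in C^3$ on the compact interval $[c_\rho,C_\rho]$. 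This gives $\int_{\T^3}(|A_2|+|A_3|+|A_4|+|A_5|)\,dx\le C\int_{\T^3}(|u-\baru|^2+|\rho-\brho|^2)\,dx$. For $A_6=(h'(\rho)-h'(\brho))(\div m-\div\barm)$, which carries a derivative on $m-\barm$, I would integrate by parts once, $\int_{\T^3}A_6\,dx=-\int_{\T^3}\nabla(h'(\rho)-h'(\brho))\cdot(m-\barm)\,dx$, and use $\nabla(h'(\rho)-h'(\brho))=h''(\rho)\nabla(\rho-\brho)+(h''(\rho)-h''(\brho))\nabla\brho$, which is pointwise bounded by $C(|\nabla(\rho-\brho)|+|\rho-\brho|)$ because $h\in C^3$ on $[c_\rho,C_\rho]$ and $\nabla\brho\in L^\infty$. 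Young's inequality then yields $|\int_{\T^3}A_6\,dx|\le C\int_{\T^3}(|\nabla(\rho-\brho)|^2+|\rho-\brho|^2+|m-\barm|^2)\,dx$.

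The genuinely delicate term, and the main obstacle, is the capillary contribution $A_1=-C_\kappa\,\baru\cdot(\brho-\rho)\,\nabla\triangle(\brho-\rho)$: writing $w:=\brho-\rho\in C^0([0,T];C^3(\T^3))$, this integrand involves three spatial derivatives of $w$, whereas $\eta_{\mathrm{R}}$ only controls $\|w\|_{H^1}$. The plan is to integrate by parts twice in $\int_{\T^3}\baru_i\,w\,\partial_i\triangle w\,dx$ (no boundary terms on $\T^3$) so that the two surplus derivatives move onto the coefficient: the first integration by parts produces $\int_{\T^3}(\div\baru)\,w\,\triangle w\,dx$ and $\int_{\T^3}(\baru\cdot\nabla w)\,\triangle w\,dx$; a second one turns the latter into terms of the form $\int(\nabla\baru:\nabla w\otimes\nabla w)$ and $\int(\div\baru)|\nabla w|^2$, controlled by $C\|\nabla w\|_{L^2}^2$, and the former into $-\int(\div\baru)|\nabla w|^2-\int w\,\nabla(\div\baru)\cdot\nabla w$. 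The only place where a \emph{second} derivative of $\baru$ threatens to appear is $\int w\,\nabla(\div\baru)\cdot\nabla w$, and this is harmless: the continuity equation for the strong solution gives $\div\baru=-(\brho_t+\baru\cdot\nabla\brho)/\brho$, whose right-hand side is $C^1$ in $x$ because $\brho\in C^0([0,T];C^3)\cap C^{3/2}((0,T);C^1)$ and $\baru\in C^0([0,T];C^1)$ by \eqref{ass11}, so $\nabla(\div\baru)\in L^\infty$. Hence $|\int_{\T^3}A_1\,dx|\le C\int_{\T^3}(|\nabla(\rho-\brho)|^2+|\rho-\brho|^2)\,dx$.

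Putting the four bounds together gives $\frac{d}{dt}\int_{\T^3}\eta_{\mathrm{R}}\,dx\le C\int_{\T^3}(|\nabla(\rho-\brho)|^2+|\rho-\brho|^2+|u-\baru|^2+|m-\barm|^2)\,dx$, and I would absorb the right-hand side into $\int_{\T^3}\eta_{\mathrm{R}}\,dx$ using $c_\rho\le\rho$ (for the $u-\baru$ term), Poincar\'e's inequality, which is licit by \eqref{mean} (for the $\rho-\brho$ term), and the two estimates in \eqref{eq:er1} (for the $\nabla(\rho-\brho)$ and $m-\barm$ terms), arriving at \eqref{grre} with $C$ depending only on $C_\kappa$, $c_\rho$, $C_\rho$, $c_m$ and the norms of the strong solution on $[0,T]$ (which, for a strong solution, are controlled by $\brho_0,\baru_0$). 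The hard part is precisely the last term of $A_1$: organizing the double integration by parts so it closes without assuming $\baru\in C^2$, which the continuity equation makes possible by upgrading the spatial regularity of $\div\baru$.
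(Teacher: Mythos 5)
Your proof is correct and follows essentially the same route as the paper's: term-by-term estimation of the $A_i$ from Lemma \ref{lem:rreNSK}, with the Taylor remainders of $p$ and $h'$ bounded by $C|\rho-\brho|^2$ on $[c_\rho,C_\rho]$, a single integration by parts for $A_6$, a double integration by parts (equivalently, the divergence form \eqref{kortstres} of the Korteweg stress) for the capillary term $A_1$, and a final absorption into $\int_{\T^3}\eta_{\mathrm{R}}\,dx$ via \eqref{eq:er1} and Poincar\'e's inequality under \eqref{mean}. The one (welcome) refinement is your treatment of the term $\int w\,\nabla(\div\baru)\cdot\nabla w$: the paper simply invokes $\Norm{\baru}_{C^2}$, which is not literally granted by \eqref{ass11}, whereas you extract $\nabla\div\baru\in L^\infty$ from the continuity equation and the assumed regularity of $\brho$.
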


\begin{proof}
 The proof is based on Lemma \ref{lem:rreNSK} and estimates of the $A_i.$ In order to keep the notation
 manageable we suppress the $t$ dependency of all quantities in the proof.
For example $\| u\|_{C^2}$ refers to $\|u\|_{C^0([0,T],C^2(\T^3))}$
which is bounded and depends on $\rho_0, m_0,T,C_\kappa,$ only.
Let us estimate the $A_i$ one by one:
For $A_1$ we conclude using \eqref{kortstres} and integration by parts 
\begin{equation}\label{est:a2N}
 \begin{split}
&  \int_{{\T^3}} A_1\operatorname{d}   x \\
=& \int_{{\T^3}} -C_\kappa \bar u \cdot ( \nabla \triangle \bar \rho  - \nabla \triangle \rho) (\bar \rho -  \rho) \operatorname{d}   x\\
=& \int_{{\T^3}} C_\kappa    \bar u \cdot\div\Big(\Big( ( \rho  -\bar  \rho)   \triangle(\bar  \rho  -  \rho) - \frac{|\nabla ( \bar \rho  - \rho)|^2 }{2}\Big) \mathbb{I} 
+ \nabla (\bar  \rho  -  \rho) \otimes \nabla ( \bar \rho  -  \rho) 
\Big) \operatorname{d}   x
\\
=& C_\kappa \int_{{\T^3}}   \div (\bar  u)\Big( ( \bar \rho  -  \rho)   \triangle( \bar \rho  - \rho) + \frac{|\nabla (\bar  \rho  -  \rho)|^2 }{2}\Big)  
- \nabla  u : \nabla ( \bar \rho  -  \rho) \otimes \nabla ( \bar \rho  -  \rho) \operatorname{d}   x
\\
 \leq& C_\kappa \Norm{\nabla \div(  \bar u)}_{L^\infty} C_P \norm{\bar \rho - \rho}_{H^1}^2  + \frac{3C_\kappa}{2} \Norm{\nabla   \bar u}_{L^\infty} \norm{\bar\rho - \rho}_{H^1}^2\\
\leq&  5\Norm{  \bar u}_{C^2}C_P \int_{\T^3} \eta_{\mathrm{R}}\Big(\rho,m|\bar \rho, \bar m\Big) \operatorname{d}   x\, .
 \end{split}
\end{equation}
The summand $A_2$ can be estimated as follows:
\begin{equation}\label{est:a3N}
 \begin{split}
   \int_{{\T^3}} A_2\operatorname{d}   x 
=& \int_{{\T^3}} \frac{\nabla\bar \rho}{\bar \rho} \otimes   \bar u : (   \bar u - u) \otimes
( \rho   \bar u -  \rho u)
+ \frac{1}{3}\frac{\nabla \bar \rho}{\bar \rho} \cdot   \bar u (p( \rho)- p(\bar \rho) - p'(\bar \rho)(\rho-\bar \rho))\operatorname{d}   x\\
\leq & \frac{\Norm{\nabla \bar \rho \otimes   \bar u}_{L^\infty}}{c_\rho} \Big(\int_{{\T^3}} \rho |  \bar u - u|^2\operatorname{d}   x + p_M \Norm{\rho - \bar \rho}_{L^2}^2\Big)\\
\leq &\frac{\Norm{\nabla \bar \rho}_{L^\infty} \Norm{  \bar u}_{L^\infty}}{c_\rho} \frac{4p_MC_P}{C_\kappa} 
\int_{\T^3} \eta_{\mathrm{R}}\Big(\rho,m|\bar \rho, \bar m\Big) \operatorname{d}   x\, .
 \end{split}
\end{equation}
 Concerning $A_3$ we find
\[ A_3= \frac{ \rho}{\bar \rho} \nabla \bar m : (u - \bar u)\otimes ( \bar u - u) \]
such that
\begin{equation}\label{est:a4N}
\big| \int_{{\T^3}} A_3\operatorname{d} x\big|\leq  2 \Norm{\frac{\nabla \bar m}{c_\rho}}_{L^\infty}
\int_{\T^3} \eta_{\mathrm{R}}\Big(\rho,m|\bar \rho, \bar m\Big) \operatorname{d} x\, .
\end{equation}
The estimates for $A_4$ and $A_5$ are straightforward, i.e., 
\begin{equation}\label{est:a5N}
 \begin{split}
  \big| \int_{{\T^3}} A_4\operatorname{d} x\big|&
  \leq \frac{1}{c_\rho}\Norm{\div  \bar m}_{L^\infty} p_M \frac{C_P}{C_\kappa} 
\int_{\T^3} \eta_{\mathrm{R}}\Big(\rho,m|\bar \rho, \bar m\Big) \operatorname{d}  x\, ,
\\
  \big| \int_{{\T^3}} A_5\operatorname{d}  x\big|&\leq   \Norm{\div\bar  m}_{L^\infty} h_M \frac{C_P}{C_\kappa} 
\int_{\T^3} \eta_{\mathrm{R}}\Big(\rho,m|\bar \rho, \bar m\Big) \operatorname{d}x\, .
 \end{split}
\end{equation}

Using integration by parts we find for $A_6$
\begin{multline}\label{est:a7N1}
   \int_{{\T^3}} A_{6}\operatorname{d} x
= \int_{{\T^3}} \big( h''(\bar \rho) \nabla \bar\rho - h''(\rho) \nabla  \rho\big)
\cdot \big( m - \bar{m}\big) \operatorname{d}   x\\
= \int_{{\T^3}} \big( h''(\bar \rho) \nabla \bar \rho - h''( \rho) \nabla\bar \rho\big)\cdot \big(m -\bar{  m}\big) \operatorname{d}   x
+ \int_{{\T^3}} \big( h''( \rho) \nabla \bar\rho - h''( \rho) \nabla \rho\big)\cdot \big(m - \bar{  m}\big) \operatorname{d}   x
\end{multline}
such that
\begin{equation}\label{est:a7N2}
\big| \int_{{\T^3}} A_{6}\operatorname{d}   x\big|\leq  
h_M \Norm{\bar \rho}_{C^1} \Norm{\bar \rho - \rho}_{L^2} \Norm{  m - \bar  m}_{L^2}
+ h_M \norm{\bar \rho - \rho}_{H^1} \Norm{ m -\bar   m}_{L^2},
\end{equation}
which implies
\begin{equation}\label{est:a7N3}
\big| \int_{{\T^3}} A_{6}\operatorname{d}   x\big|\leq h_M( \Norm{\bar \rho}_{C^1} + 1) \frac{ C_P c_m^2}{8c_\rho^2C_\kappa} 
\int_{\T^3} \eta_{\mathrm{R}}\Big(\rho,m|\bar \rho, \bar m\Big) \operatorname{d}   x\, ,
\end{equation}
 due to \eqref{eq:er1}.
The assertion of the Lemma follows upon combining \eqref{est:a2N}, \eqref{est:a3N}, \eqref{est:a4N}, 
\eqref{est:a5N}, and \eqref{est:a7N3}.
\end{proof}

Now we are in position to state and prove the main result of this section:
strong solutions to \eqref{eq:Kortcap2} with $\kappa(\rho)=C_\kappa$ depend continuously on their initial data,
provided they satisfy the uniform bounds \eqref{ass12}.

\begin{theorem}[Stability]\label{thrm:mt1}
Let $T,C_\kappa>0$ be given and let $(\rho,   u)$ and $(\bar{\rho},\bar u) $
be strong solutions of \eqref{eq:Kortcap2}, with $\kappa(\rho)=C_\kappa>0,$ corresponding to initial data 
$(\rho_0,   u_0)$ and $(\bar{\rho}_0,\bar{u}_0) ,$ respectively.
Let \eqref{ass12} and
\eqref{mean} hold.
Then, there exists a constant $C =C(T, C_\kappa ,\bar \rho_0,\bar   u_0, c_\rho, C_\rho, c_m)>0$  such that the following estimate is satisfied
 \begin{equation}
\frac{C_\kappa}{2}  \|\rho - \bar \rho\|_{L^\infty(0,T;H^1({\T^3}))}^2 + 
\frac{c_\rho}{2} \Norm{u -   \bar u}_{L^\infty(0,T;L^2({\T^3}))}^2
\leq 
C \int_{\T^3} \eta_{\mathrm{R}}\Big(
             \rho_0, {   m}_0    
           |
          \bar   \rho_0, \bar  m_0
            \Big) \operatorname{d}   x\, .
\end{equation}
\end{theorem}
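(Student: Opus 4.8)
The plan is to integrate the differential inequality established in Lemma~\ref{lem:destNSK} by Gronwall's lemma, and then to convert the resulting control of the reduced relative energy into control of the natural norms by means of the coercivity bounds~\eqref{eq:er1}. No new estimates are needed beyond those already proved in Lemmas~\ref{lem:freNSK}--\ref{lem:destNSK}.

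First I would observe that, under the regularity class \eqref{ass11}, the map $t \mapsto \int_{\T^3}\eta_{\mathrm{R}}(\rho,m\,|\,\bar\rho,\bar m)\operatorname{d} x$ is absolutely continuous on $[0,T]$: it is built from $\nabla(\rho-\bar\rho)\in C^0([0,T],C^1(\T^3))$ and from the relative kinetic energy density $k(\rho,m\,|\,\bar\rho,\bar m)$, which is a smooth function of $(\rho,m)$ on the region $\rho\geq c_\rho>0$ fixed by \eqref{ass12}. Hence Lemma~\ref{lem:destNSK} may be integrated in time, and with the constant $C$ furnished there (depending only on $T,C_\kappa,\bar\rho_0,\bar u_0,c_\rho,C_\rho,c_m$) Gronwall's inequality gives
\[
\int_{\T^3}\eta_{\mathrm{R}}\Big(\rho,m\,\big|\,\bar\rho,\bar m\Big)(x,t)\operatorname{d} x \;\leq\; e^{CT}\int_{\T^3}\eta_{\mathrm{R}}\Big(\rho_0,m_0\,\big|\,\bar\rho_0,\bar m_0\Big)(x)\operatorname{d} x \qquad \text{for all } t\in[0,T].
\]

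It then remains to apply the first inequality in \eqref{eq:er1}. That bound relies on $\rho\geq c_\rho$ from \eqref{ass12} for the kinetic part, and on Poincar\'e's inequality on $\T^3$ for the capillary part; the latter requires $\rho(t,\cdot)$ and $\bar\rho(t,\cdot)$ to have equal mean, which holds for all $t\in[0,T]$ because the zero-mean condition \eqref{mean} is propagated in time by the two continuity equations in \eqref{eq:Kortcap2}. Consequently
\[
\frac{C_\kappa}{2}\|\rho-\bar\rho\|_{H^1(\T^3)}^2 + \frac{c_\rho}{2}\|u-\bar u\|_{L^2(\T^3)}^2 \;\leq\; \int_{\T^3}\eta_{\mathrm{R}}\Big(\rho,m\,\big|\,\bar\rho,\bar m\Big)(x,t)\operatorname{d} x
\]
for every $t\in[0,T]$; combining this with the Gronwall estimate above and taking the supremum over $t\in[0,T]$ yields the assertion, after absorbing $e^{CT}$ into the constant $C$.

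The argument is deliberately short because the substantive work has already been carried out: the non-convexity of $h$ is absorbed into the capillary contribution in Lemmas~\ref{lem:freNSK}--\ref{lem:rreNSK}, and the delicate cancellations and bounds on $A_1,\dots,A_6$ — in particular the integration-by-parts identity \eqref{kortstres} used to handle $A_1$ and the $H^1$-to-$L^2$ trade-off for $A_6$ — are the content of Lemma~\ref{lem:destNSK}. The only mild technical point in the present proof is the justification of absolute continuity so that the differential inequality may legitimately be integrated, which is immediate from \eqref{ass11} together with the uniform positivity of the densities; I do not expect any genuine obstacle here.
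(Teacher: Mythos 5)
Your proposal is correct and follows exactly the paper's own (very terse) argument: integrate the differential inequality \eqref{grre} of Lemma \ref{lem:destNSK} via Gronwall and combine with the coercivity bound \eqref{eq:er1}$_1$. The extra points you supply — absolute continuity of $t\mapsto\int\eta_{\mathrm R}$ under \eqref{ass11}, and propagation of the zero-mean condition \eqref{mean} so that Poincar\'e's inequality upgrades the $H^1$ semi-norm to the full norm — are sensible elaborations of steps the paper leaves implicit.
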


\begin{proof}
The assertion of the theorem follows by applying Gronwall's Lemma to \eqref{grre} and combining the result with \eqref{eq:er1}$_1$.
\end{proof}

\begin{remark}[Viscosity]
Note that we could also add viscous terms into \eqref{eq:Kortcap2} such that we obtained \eqref{NSK-sec}.
The arguments 
presented here can be extended easily to that case.
In the case with viscosity the results from \cite{Kot08} guarantee the existence of strong solutions for
short times.
\end{remark}

\begin{remark}[Capillarity]
\label{rem:cap1}
The constant $C$ 
in Theorem \ref{thrm:mt1} depends on $C_\kappa$ like $\exp(1/C_\kappa)$ at best,
as can be seen from the estimates of the $A_i$ in the proof of Lemma \ref{lem:destNSK}.
In particular, the constant blows up for $C_\kappa \rightarrow 0.$
\end{remark}


\section{Model convergence}\label{sec:mc}
In this section we employ the relative energy framework to show that the isothermal Navier-Stokes-Korteweg model
\eqref{NSK-sec} 
 is
indeed approximated by the lower order model introduced in \cite{Roh10}, which is given in \eqref{lo-intro}.
Note that \eqref{NSK-sec} is the model investigated in the previous section plus viscosity.
 Before we present our analysis let us digress a bit in order to justify our interest in the
 relation between the two models.
Numerical schemes for the isothermal NSK system \eqref{NSK-sec} have been considered by several authors, see
\cite{GMP_14, BP13,TXKV14} and references therein.
In these works the main effort was directed at overcoming stability issues, which are mainly caused
by the non-convexity of the energy.
Several of the approaches for constructing numerical schemes were based on
Runge-Kutta-discontinuous Galerkin type discretizations.
However, the number of numerical
flux functions which may be used is severely restricted by the non-hyperbolicity of the first order part of \eqref{NSK-sec} which is caused by $h$ being non-convex.

Moreover, discrete energy inequalities for explicit-in-time schemes cannot be proven by standard arguments from hyperbolic theory.
Indeed,
a non-monotone behavior of the energy is observed in numerical experiments.
To overcome these problems the  following family of approximations, parametrized in $\alpha >0,$
of the NSK system was introduced in \cite{Roh10}:
\begin{equation}\label{lo}
 \begin{split}
   \rhoa_t + \div(\rhoa \va) &=0\\
   (\rhoa \va)_t + \div (\rhoa \va \otimes \va) + \nabla (p(\rhoa) + C_\kappa \frac{\alpha}{2} (\rhoa)^2)&=
   \div(\sigma[\va]) + C_\kappa\alpha \rhoa \nabla  \ca\\
  \ca - \frac{1}{\alpha}\triangle \ca &= \rhoa,
 \end{split}
\end{equation}
where $\ca$ is an auxiliary variable without any immediate physical interpretation. 
It is a striking feature of \eqref{lo} that the first order part of \eqref{lo}$_{1,2}$ forms  a hyperbolic system for $\rhoa,\, \ma,$
provided $\alpha$ is sufficiently large.
Numerical studies showing that \eqref{lo} offers numerical advantages over \eqref{NSK-sec}
 and that solutions of \eqref{lo} are similar to those of \eqref{NSK-sec} can be found in \cite{NRS14}.
In particular, examples are presented in \cite{NRS14} which show that explicit-in-time schemes for \eqref{lo} have far better stability properties than explicit-in-time schemes for \eqref{NSK-sec}.
Variational problems related to minima of the energy functional of \eqref{lo}, see Lemma \ref{lem:rel}, were investigated in \cite{BLR95,SV03}.
Based on formal arguments it was conjectured in \cite{Roh10} that for $\alpha \rightarrow \infty$ solutions of  \eqref{lo} converge to solutions of \eqref{NSK-sec}.
Results in this direction were obtained in \cite{Cha14} using Fourier methods and for similar models describing elastic solids in \cite{ERV14,Gie_14a}. 
The result in \cite{ERV14} is obtained using compactness arguments, while \cite{Gie_14a} is based on  a (technically simpler) version of the arguments presented here.
The main result of this section, Theorem \ref{thrm:mt2}, is an estimate for the difference between solutions of \eqref{lo} and \eqref{NSK-sec}.

\subsection{Assumptions on well-posedness and uniform bounds}
We
 complement \eqref{NSK-sec}, \eqref{lo} with initial data 
\begin{equation}\label{ic}\begin{split}
 &\rho(0,\cdot)= \rho_0 ,\quad u (0,\cdot ) = u_0 \quad \text{in } \mathbb{T}^3,\\
&\rhoa(0,\cdot)= \rho_0^\alpha ,\quad \va(0,\cdot) =\va_0 \quad \text{in } \mathbb{T}^3,
\end{split}\end{equation}
for given data $\rhoa_0,\rho_0 \in C^3(\mathbb{T}^3,(0,\infty))$ and 
$\va_0, u_0 \in C^2(\mathbb{T}^3,\mathbb{R}^3)$
which we assume to be related as follows
\begin{equation}\label{ass:ic}
\begin{split}
& \int_{\mathbb{T}^3}( \rhoa_0 - \rho_0) \operatorname{d}  x=0,
\quad \Norm{\rhoa_0 - \rho_0}_{H^1(\mathbb{T}^3)} = \mathcal{O}(\alpha^{-1/2})\, , \\
&\Norm{\rhoa_0}_{H^3(\mathbb{T}^3)} = \mathcal{O}(1),
\quad \Norm{\va_0 -  u_0}_{L^2(\mathbb{T}^3)} = \mathcal{O}(\alpha^{-1/2})\, . 
\end{split}
\end{equation}

Concerning the viscous part of the stress we will require that there is bulk viscosity, i.e.,
\begin{equation}\label{visc} \lambda + \frac{2}{3}\mu >0.\end{equation}

The well-posedness of \eqref{lo} was studied in \cite{Roh10} for two space dimensions on the whole of $\mathbb{R}^2$.
 We will assume  (local-in-time) existence of strong solutions to  \eqref{lo} and \eqref{NSK-sec}
 posed on $\mathbb{T}^3.$ In particular:
\begin{assumption}[Regularity]
 We assume that there is some $T>0$ such that strong solutions of \eqref{lo} and \eqref{NSK-sec} exist and satisfy
\begin{equation}\label{ass1}
 \begin{split}
\rho &\in C^0([0,T],C^3(\mathbb{T}^3,\mathbb{R}_+)) \cap C^1((0,T),C^1(\mathbb{T}^3,\mathbb{R}_+))\, ,\\
   u &\in C^0([0,T],C^{2}(\mathbb{T}^3,\mathbb{R}^3)) \cap C^1((0,T),C^0(\mathbb{T}^3,\mathbb{R}^3))\, ,\\
  \rhoa &\in C^0([0,T],C^1(\mathbb{T}^3,\mathbb{R}_+)) \cap C^1((0,T),C^0(\mathbb{T}^3,\mathbb{R}_+))\, ,\\
   \va &\in C^0([0,T],C^{2}(\mathbb{T}^3,\mathbb{R}^3)) \cap C^1((0,T),C^0(\mathbb{T}^3,\mathbb{R}^3))\, ,\\
   \ca &\in C^0([0,T],C^3(\mathbb{T}^3,\mathbb{R}_+)) \cap C^1((0,T),C^2(\mathbb{T}^3,\mathbb{R}_+))\, .
 \end{split}
\end{equation}
\end{assumption}

\begin{remark}[Regularity]\label{rem:reg}
Note that the regularity assumed in \eqref{ass1} coincides with the regularity asserted in  \cite{Roh10} and \cite{Kot08},
for $T$ small enough.
Therefore, for appropriate $T$, the only assumptions made here are that the change from natural to periodic boundary 
conditions does not deteriorate the 
regularity of solutions and that the time of existence of solutions of \eqref{lo} does not go to zero
for $\alpha \rightarrow \infty$.
The existence of a space derivative of $\rho_t$ follows from the mass conservation equation
and the regularity of $\rho, u.$
\end{remark}
\begin{assumption}[Uniform bounds]
We assume that there are constants $\alpha_0,C_\rho,c_\rho,c_m >0$ such that 
\begin{equation}\label{ass2}
 \begin{split}
  c_\rho &\leq \rho(t, x), \rhoa(t, x) \leq C_\rho  \quad \forall \, t \in [0,T],\, x \in \mathbb{T}^3,\,
  \alpha>\alpha_0\, ,\\
 c_m &\geq \sup \{\Norm{ \ma}_{L^\infty((0,T)\times \mathbb{T}^3)},\, \Norm{  m}_{L^\infty((0,T)\times \mathbb{T}^3)}\}\, .
 \end{split}
\end{equation}
\end{assumption}

\begin{remark}[Uniform a-priori estimates]
 The crucial assumption in \eqref{ass2} is that these estimates hold uniformly in $\alpha$,
 while analogous estimates for fixed $\alpha$ are immediate for sufficiently small times
and appropriate initial data.
We are not aware that there is any mechanism in \eqref{lo} which makes \eqref{ass2} unlikely to hold.
While  it would be desirable to prove \eqref{ass2}, this is beyond the scope of this work.
\end{remark}

\begin{remark}[A-priori estimate on $\ca$]\label{rem:max}
 Note that the maximum principle applied to the screened Poisson equation \eqref{lo}$_3$ immediately implies
\[c_\rho \leq  \ca(t, x)\leq C_\rho  \quad \forall \, t \in [0,T],\, x \in \mathbb{T}^3,\, \alpha>\alpha_0\, ,\]
once we assume \eqref{ass2}.
\end{remark}

Note that, using \eqref{lo}$_3$,  \eqref{lo}$_2$ can be rewritten  as
\begin{equation}
 (\rhoa \va)_t + \div (\rhoa \va \otimes \va) + \nabla p(\rhoa) = \div(\sigma[\va]) + C_\kappa \rhoa \nabla \triangle \ca.
\end{equation}
Moreover,  \eqref{lo} conserves momentum, since a stress tensor exists, analogous to \eqref{stressep}.

As already noted in \cite{Roh10}, solutions of \eqref{lo} satisfy an energy inequality.
In order to keep this paper self contained we state the energy inequality and sketch its proof.

\begin{lemma}[Energy balance for \eqref{lo}]\label{lem:rel}
 Let $(\rhoa,\va, \ca)$ be  a strong solution of \eqref{lo}. Then, the following energy balance law is satisfied:
\begin{multline}
0 \geq - \sigma[\va] : \nabla \va=
\big( h(\rhoa) + \frac{\rho}{2}\norm{\va}^2 +
\frac{C_\kappa}{2} \norm{\nabla \ca}^2 + \frac{\alpha C_\kappa}{2} \norm{\rhoa - \ca}^2\big)_t\\
+ \div \big( \va \big( \rhoa h'(\rhoa) + \frac{1}{2} \rhoa \norm{\va}^2
- C_\kappa \rhoa\triangle\ca\big)- C_\kappa \ca_t \nabla \ca
 + C_\kappa \rhoa \nabla \rhoa\cdot \nabla \va
- \sigma[\va] \va\big)
\end{multline}
and 
\begin{equation}\label{divva}
 \Norm{ \div  \va }_{L^2([0,T]\times \mathbb{T}^3)}^2 \leq 
 \frac{1}{\lambda + \frac{2}{3}\mu} \int_{\mathbb{T}^3} h(\rhoa_0) + \frac{\rhoa_0}{2} \norm{\va_0}^2 
+ \frac{C_\kappa\alpha} {2}\norm{\rhoa_0 - \ca_0}^2 + \frac{C_\kappa}{2}\norm{\ca_0}^2 \operatorname{d}  x\, .
\end{equation}
\end{lemma}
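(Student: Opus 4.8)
The plan is to derive the energy balance law by testing the equations in \eqref{lo} against natural multipliers and combining, exactly in the spirit of the formal computations \eqref{pote}--\eqref{tote}, but now keeping track of the viscous dissipation and the extra screened-Poisson variable $\ca$. First I would take the inner product of \eqref{lo}$_2$ with $\va$ and use \eqref{lo}$_1$ to rewrite the convective terms in conservative form, producing
$$
\bigl(\tfrac{\rhoa}{2}\norm{\va}^2\bigr)_t + \div\bigl(\tfrac{1}{2}\rhoa\norm{\va}^2\va\bigr)
= \va\cdot\nabla\bigl(p(\rhoa) + C_\kappa\tfrac{\alpha}{2}(\rhoa)^2\bigr) + \va\cdot\div(\sigma[\va]) + C_\kappa\alpha\rhoa\va\cdot\nabla\ca,
$$
up to reorganising divergences. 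The pressure term $-\va\cdot\nabla p(\rhoa)$ combines with \eqref{lo}$_1$ to give $\partial_t h(\rhoa)$ plus a divergence, using $p(\rho)=\rho h'(\rho)-h(\rho)$; the viscous term yields $-\sigma[\va]:\nabla\va$ after moving a divergence onto the flux (this is where the $-\sigma[\va]\va$ flux contribution appears).

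The genuinely new bookkeeping is the capillary/order-parameter block. I would handle the terms $C_\kappa\tfrac{\alpha}{2}(\rhoa)^2$ and $C_\kappa\alpha\rhoa\nabla\ca$ together with the constraint \eqref{lo}$_3$, $\ca - \tfrac1\alpha\triangle\ca = \rhoa$. Differentiating the constraint in time relates $\ca_t$ to $\rhoa_t$, hence to $\div(\rhoa\va)$, which lets one recognise $C_\kappa\alpha\rhoa\va\cdot\nabla\ca$ together with $-C_\kappa\alpha\rhoa\va\cdot\nabla\rhoa$ as the time derivative of $\tfrac{\alpha C_\kappa}{2}\norm{\rhoa-\ca}^2$ plus $\tfrac{C_\kappa}{2}\norm{\nabla\ca}^2$, modulo the fluxes $-C_\kappa\ca_t\nabla\ca$ and $C_\kappa\rhoa\nabla\rhoa\cdot\nabla\va$ listed in the statement. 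Concretely: write $\rhoa=\ca-\tfrac1\alpha\triangle\ca$ inside the relevant products, integrate by parts the $\triangle\ca$ terms to generate $\nabla\ca$ energy, and use $\partial_t\ca$ from the constraint. Assembling all pieces gives the stated identity with $-\sigma[\va]:\nabla\va$ on the left; the sign $-\sigma[\va]:\nabla\va\le 0$ follows from the pointwise inequality $\sigma[\va]:\nabla\va = \lambda(\div\va)^2 + \tfrac{\mu}{2}\norm{\nabla\va+(\nabla\va)^T}^2 \ge 0$, valid under $\mu\ge0$, $\lambda+\tfrac2d\mu\ge0$ (here $d=3$).

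For the second claim \eqref{divva}, I would integrate the energy identity over $[0,T]\times\T^3$: all the divergence terms vanish by periodicity, the time derivative integrates to the difference of total energies at $T$ and $0$, and the remaining total energy at time $T$ is nonnegative (each summand is, given $h\ge 0$). Hence $\int_0^T\!\!\int \sigma[\va]:\nabla\va \le \int \bigl(h(\rhoa_0)+\tfrac{\rhoa_0}{2}\norm{\va_0}^2 + \tfrac{\alpha C_\kappa}{2}\norm{\rhoa_0-\ca_0}^2 + \tfrac{C_\kappa}{2}\norm{\ca_0}^2\bigr)dx$. Finally, by the algebraic identity $\sigma[\va]:\nabla\va = (\lambda+\tfrac23\mu)(\div\va)^2 + \tfrac{\mu}{2}\norm{\nabla\va+(\nabla\va)^T - \tfrac23(\div\va)\mathbb{I}}^2$ (the standard split into spherical and deviatoric parts in $d=3$), and $\tfrac{\mu}{2}\norm{\cdot}^2\ge0$, we get $(\lambda+\tfrac23\mu)\Norm{\div\va}_{L^2}^2 \le \int_0^T\!\!\int\sigma[\va]:\nabla\va$, and dividing by $\lambda+\tfrac23\mu>0$ (assumption \eqref{visc}, though at this stage only the energy-inequality part is being used) yields \eqref{divva}.

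The main obstacle is the careful manipulation of the order-parameter terms: one must choose the right way to substitute the constraint \eqref{lo}$_3$ so that the products $\alpha\rhoa^2$ and $\alpha\rhoa\nabla\ca$ reorganise into a perfect time derivative of $\tfrac{\alpha C_\kappa}{2}\norm{\rhoa-\ca}^2 + \tfrac{C_\kappa}{2}\norm{\nabla\ca}^2$ plus clean divergences, without leftover terms. This is the analogue of the cancellation of the term $D$ in Section~\ref{sec-funcrel-tote}, and it is purely a matter of organising integrations by parts using $\ca_t = (1-\tfrac1\alpha\triangle)^{-1}\rhoa_t$ and $\rhoa_t=-\div(\rhoa\va)$; everything else is routine. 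Since the lemma only asserts the identity and the $L^2$ bound (and the excerpt says "sketch its proof"), I would present the computation compactly, pointing to \eqref{loe1} to motivate why these are exactly the natural energy densities.
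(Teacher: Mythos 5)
Your proposal is correct and takes essentially the same route as the paper: the paper's (equally brief) proof multiplies \eqref{lo}$_1$ by $h'(\rhoa)-\tfrac12|\va|^2+\alpha C_\kappa(\rhoa-\ca)$ and \eqref{lo}$_2$ by $\va$, adds, integrates by parts, and closes the order-parameter block with the constraint-derived identity $\nabla\ca\cdot\nabla\ca_t-\alpha\ca_t(\rhoa-\ca)-\div(\ca_t\nabla\ca)=0$ — exactly the reorganisation you describe — and then uses the same spherical/deviatoric splitting of $\sigma[\va]:\nabla\va$ to obtain \eqref{divva}. (Only a cosmetic slip: your first displayed identity should carry $-\va\cdot\nabla\big(p(\rhoa)+C_\kappa\tfrac{\alpha}{2}(\rhoa)^2\big)$ on the right, as you in fact use in the sentence that follows it.)
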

\begin{proof}
 Equation \eqref{lo}$_1$ is multiplied by $h'(\rhoa) - \frac{1}{2} |\va|^2 + \alpha C_\kappa (\rhoa - \ca)$ and \eqref{lo}$_2$ is multiplied by $\va$.
Then, both equations are added and integration by parts is used. The first assertion of the lemma follows upon noting that
\[ \nabla \ca \cdot \nabla \ca_t - \alpha  \ca_t (\rhoa - \ca) - \div ( \ca_t \nabla \ca) =0\, .\] 
The second assertion of the lemma is a result of the first assertion and 
the identity
\begin{equation}\label{dissip}
 \sigma[u] : \nabla u = (\lambda + \frac{2}{3} \mu) (\div u)^2 + 2\mu \nabla^o u : \nabla^o u \geq 0\, ,
\end{equation}
where $\nabla^o$ denotes the trace free part of the Jacobian.
\end{proof}

\subsection{Elliptic approximation}
In this section we study properties of the screened Poisson operator in \eqref{lo}$_3$.
To quantify the approximation of $\rhoa$ by $\ca$ we recall the following result from \cite{Gie_14a}. The solution operator to
$\operatorname{Id}- \frac{1}{\alpha}\triangle$ on $\mathbb{T}^3$ is denoted by $G_\alpha:$
\begin{lemma}[Elliptic approximation, \cite{Gie_14a}]\label{lem:ea}
The operator $G_\alpha$ has the following properties:
\begin{enumerate}
 \item[(a)]For any $f \in L^2({\T^3})$ the following estimate is fulfilled
\begin{equation}\label{est:g}
 \| G_\alpha[f]\|_{L^2({\T^3})}\leq \| f\|_{L^2({\T^3})}\, .
\end{equation}
\item[(b)] 
For any $k \in \mathbb{N}$ and $f \in H^k(\T^3)$ it is  $G_\alpha[f] \in H^{k+2}(\T^3)$. 
\item[(c)]
For all $f \in H^1(\T^3)$ the following holds:
\begin{equation}\label{eq:div}
G_\alpha[f_x] =(G_\alpha[f])_x \quad \text{ and } \quad  \| f - G_\alpha[f]\|_{L^2({\T^3})}^2
 \leq \frac{2}{\alpha} |f|_{H^1({\T^3})}^2\, .
\end{equation}
\item[(d)] In case $f \in H^2(\T^3)$ the following (stronger) estimate is satisfied:
\[  \| f - G_\alpha[f]\|_{L^2({\T^3})}^2
 \leq \frac{1}{\alpha^2} |f|_{H^2({\T^3})}^2\, .\]
\end{enumerate}
\end{lemma}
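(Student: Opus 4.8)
The plan is to diagonalise the screened Poisson operator by Fourier series on $\T^3$. Writing $f = \sum_{n} \widehat{f}_n\, e^{\mathrm{i} n\cdot x}$ as a Fourier series on $\T^3$ (with $n$ ranging over the dual lattice), the operator $G_\alpha = \bigl(\operatorname{Id} - \tfrac1\alpha\triangle\bigr)^{-1}$ acts as multiplication by the symbol $\widehat{G}_\alpha(n) := \bigl(1 + |n|^2/\alpha\bigr)^{-1}$. Since $\alpha>0$ one has $1 + |n|^2/\alpha \ge 1$ for every frequency $n$, so there are no small divisors and $\widehat{G}_\alpha$ is a bounded multiplier with $0 < \widehat{G}_\alpha(n) \le 1$; part (a) is then immediate from Parseval's identity. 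For part (b) one uses $(1+|n|^2)\,\widehat{G}_\alpha(n) = \alpha\,(1+|n|^2)/(\alpha+|n|^2) \le \max(1,\alpha)$, so that, with the Sobolev exponent denoted $k$,
\[
\|G_\alpha f\|_{H^{k+2}}^2 = \sum_{n}(1+|n|^2)^{k}\bigl((1+|n|^2)\widehat{G}_\alpha(n)\bigr)^2|\widehat{f}_n|^2 \le \max(1,\alpha)^2\,\|f\|_{H^k}^2 < \infty .
\]
The commutation identity $G_\alpha[f_x] = (G_\alpha[f])_x$ in (c) is clear because $G_\alpha$ and $\partial_{x_j}$ are both Fourier multipliers (equivalently, $G_\alpha$ commutes with translations).

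For the quantitative estimates in (c) and (d) one reduces to an elementary bound for the symbol of $\operatorname{Id} - G_\alpha$: with $s := |n|^2/\alpha \ge 0$ this symbol is $1 - \widehat{G}_\alpha(n) = s/(1+s)$. Using $\bigl(s/(1+s)\bigr)^2 \le s$ (valid since $s/(1+s)^2 \le 1$) and Parseval, one gets
\[
\|f - G_\alpha f\|_{L^2}^2 = \sum_{n}\Bigl(\tfrac{s}{1+s}\Bigr)^2|\widehat{f}_n|^2 \le \sum_{n} s\,|\widehat{f}_n|^2 = \tfrac1\alpha\sum_{n}|n|^2|\widehat{f}_n|^2 = \tfrac1\alpha|f|_{H^1}^2 \le \tfrac2\alpha|f|_{H^1}^2 ,
\]
which is (c) (the claimed constant $2$ is not sharp). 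For (d), when $f \in H^2$ one instead uses $\bigl(s/(1+s)\bigr)^2 \le s^2$, giving $\|f - G_\alpha f\|_{L^2}^2 \le \sum_n s^2|\widehat{f}_n|^2 = \tfrac1{\alpha^2}\sum_n|n|^4|\widehat{f}_n|^2 = \tfrac1{\alpha^2}|f|_{H^2}^2$. The same estimate can be seen without Fourier series from the operator identity $f - G_\alpha f = -\tfrac1\alpha\,G_\alpha[\triangle f]$, obtained by applying $\operatorname{Id} - \tfrac1\alpha\triangle$ to both sides, combined with part (a).

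No serious obstacle is expected; the only points needing care are fixing once and for all the normalisations of the Fourier transform and of $\triangle$ on $\T^3$ so that the Sobolev seminorms $|f|_{H^1}$, $|f|_{H^2}$ coincide with the Fourier-side quantities used above (on the torus $|f|_{H^2}^2 = \|\triangle f\|_{L^2}^2 = \sum_n|n|^4|\widehat{f}_n|^2$), and noting that positivity of $\alpha$ is exactly what excludes small divisors so that $G_\alpha$ is bounded on every space involved. If one prefers to avoid Fourier series, parts (a) and (c) also follow by testing the defining equation $g - \tfrac1\alpha\triangle g = f$ for $g := G_\alpha f$ against $g$ and against $f-g$ and integrating by parts on $\T^3$, which yields $\|f-g\|_{L^2}^2 = \tfrac1\alpha\langle\nabla(f-g),\nabla g\rangle \le \tfrac1\alpha\|\nabla f\|_{L^2}^2$; I would nonetheless present the Fourier argument as the primary one since it disposes of (b) and the commutation in (c) with no extra work. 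The statement is in any case the one borrowed from \cite{Gie_14a}.
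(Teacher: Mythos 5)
Your argument is correct and complete. Note, however, that the paper itself offers no proof of this lemma --- it is recalled verbatim from \cite{Gie_14a}, where it is established in a one-dimensional Lagrangian setting --- so there is no in-paper argument to compare against; your Fourier-multiplier proof is a clean, self-contained way to obtain all four parts on $\T^3$. The key observations all check out: the symbol $\widehat{G}_\alpha(n)=(1+|n|^2/\alpha)^{-1}$ lies in $(0,1]$, which gives (a) and the commutation in (c); the bound $(1+|n|^2)\widehat{G}_\alpha(n)\le\max(1,\alpha)$ gives (b) (indeed with a quantitative, though $\alpha$-dependent, estimate, which is more than the qualitative membership claimed); and the elementary inequalities $\bigl(s/(1+s)\bigr)^2\le s$ and $\bigl(s/(1+s)\bigr)^2\le s^2$ for $s=|n|^2/\alpha\ge0$ yield (c) and (d), with your constant $1/\alpha$ in (c) in fact improving on the stated $2/\alpha$. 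The only point genuinely requiring care is the one you flag: on $\T^3$ the identification $|f|_{H^2}^2=\Norm{\triangle f}_{L^2}^2=\sum_n|n|^4|\widehat f_n|^2$ uses the integration-by-parts identity $\sum_{i,j}\Norm{\partial_i\partial_j f}_{L^2}^2=\Norm{\triangle f}_{L^2}^2$, valid on the torus, and this is consistent with how the seminorm is used later in the estimate of $A_6^\alpha$. The operator identity $f-G_\alpha f=-\tfrac1\alpha G_\alpha[\triangle f]$ is a nice Fourier-free route to (d), and the variational argument you sketch for (a) and (c) is also sound.
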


\begin{remark}\label{rem:divva}
 As $\Norm{\rhoa_0}_{H^1}$ is bounded by \eqref{ass:ic}, assertion (c) of Lemma \ref{lem:ea} implies
 that the initial energy of the 
lower order model is bounded independent of $\alpha.$ Due to \eqref{divva} this implies that
$\Norm{\div \va}_{L^2([0,T]\times \mathbb{T}^3)}$ is bounded independent of $\alpha.$ 
\end{remark}

\subsection{Relative energy}
In this section we study the relative energy and relative energy flux between a solution
$(\rho ,   u)$ of  \eqref{NSK-sec}  and a solution $(\rhoa,\va, \ca)$ of \eqref{lo}.
They are based on the energies and energy fluxes 
of the systems \eqref{lo} and \eqref{NSK-sec} determined in Lemmas \ref{lem:ret} and \ref{lem:rel},
but we omit the higher order terms, i.e., those depending on $C_\kappa,$ in the relative energy flux:
\begin{equation}\label{def:re}
\begin{split}
  \eta^\alpha &:=  h(\rhoa)  + \frac{C_\kappa}{2}\norm{\nabla \ca}^2
  + \frac{\alpha C_\kappa}{2} \norm{\rhoa - \ca}^2
-h(\rho)   -  \frac{C_\kappa}{2}\norm{\nabla \rho}^2
\\& \quad
  -h'(\rho) (\rhoa - \rho)  
- C_\kappa  \nabla \rho \cdot \nabla(\ca -\rho) + \frac{\rhoa}{2}|u - \va|^2,
\\
  q^\alpha &:= \ma h'(\rhoa) + \frac{\norm{\ma}^2}{2(\rhoa)^2}  \ma  -   m h'(\rho) - \frac{\norm{  m}^2}{2\rho^2}  m
- h'(\rho)(\ma -   m)
\\&\quad
 +\frac{\norm{  m}^2}{2\rho^2} (\ma -   m)
 - \frac{  m}{\rho}\cdot \Big(\frac{\ma \otimes \ma}{\rhoa} + p(\rhoa) -\frac{  m \otimes   m}{\rho} - p(\rho) 
\Big)\, .
\end{split}
\end{equation}
Several terms in \eqref{def:re} cancel out, such that
\begin{equation}\label{def:re2}
\begin{split}
  \eta^\alpha &=  h(\rhoa) -h(\rho)-h'(\rho) (\rhoa - \rho)  + \frac{C_\kappa}{2}\norm{\nabla( \ca -  \rho)}^2 + \frac{\alpha C_\kappa}{2} \norm{\rhoa - \ca}^2
 + \frac{\rhoa}{2}|u - \va|^2,
\\
  q^\alpha &= \ma h'(\rhoa) + \frac{\norm{\ma}^2}{2(\rhoa)^2}  \ma  
 - \ma h'(\rho) +\frac{ \norm{  m}^2}{2\rho^2}   m - \frac{  m \cdot \ma}{\rhoa \rho} \ma - \frac{p(\rhoa)}{\rho}   m +  \frac{p(\rho)}{\rho}   m\, .
\end{split}
\end{equation}

As in Section \ref{sec:cd}, the relative energy is not suitable for measuring the distance between solutions
and, again, we introduce the {\it reduced relative energy} and the {\it relative kinetic energy}:
\begin{equation}\label{def:rre}
 \begin{split}
  \eta^\alpha_{\mathrm{R}} &:=  \frac{C_\kappa}{2}\norm{\nabla \ca- \nabla \rho}^2 + \frac{\alpha C_\kappa}{2} \norm{\rhoa - \ca}^2 +  K^\alpha,\\
 K^\alpha &:=  \frac{\rhoa}{2}|u - \va|^2.
 \end{split}
\end{equation}

Note that calculations analogous to the derivation of \eqref{eq:er1} imply
\begin{equation}\label{eq:er12}
\begin{split}
\int_{\mathbb{T}^3}  \eta_{\mathrm{R}}^\alpha \operatorname{d}   x&\geq  \frac{C_\kappa}{2}|\ca- \rho |_{H^1}^2 + \frac{\alpha C_\kappa}{2} \Norm{  \ca- \rhoa}_{L^2}^2 +\frac{\rhoa}{2} \|   u - \va\|_{L^2}^2\, ,\\
\int_{\mathbb{T}^3}  \eta_{\mathrm{R}}^\alpha\operatorname{d}   x &\geq \frac{C_\kappa}{4}| \ca- \rho |_{H^1}^2 + \frac{\alpha C_\kappa}{4} \Norm{  \ca- \rhoa}_{L^2}^2
+\frac{C_\kappa c_\rho^2}{8 C_P c_m^2} \|m -   \ma\|_{L^2}^2\, .
\end{split}
\end{equation}

Based on  the  relative energy and the relative energy flux we can show the following estimate whose proof
is based on the same principles as the derivation of \eqref{eq:RelEnKorGenFinal}.
However, an additional difficulty is presented by the fact that the functions which are compared solve different
PDEs.

\begin{lemma}[Rate of the relative energy]\label{lem:fre}
Let $T>0$ be given such that there are strong solutions $(\rho , u)$ and
$(\rhoa,\va, \ca)$ of \eqref{NSK-sec} and \eqref{lo}, respectively,
 satisfying \eqref{ass1} and \eqref{ass2}.
Then, 
 the rate (of change) of the relative energy $\eta^\alpha$ defined in \eqref{def:re} fulfills
 \begin{equation}
 \frac{\operatorname{d}}{\operatorname{d} t} \int_{\mathbb{T}^3}\eta^\alpha \operatorname{d}   x= \int_{\mathbb{T}^3}\sum_{i=1}^6 A_i^\alpha \operatorname{d}   x\,  ,
\end{equation}
with
\begin{equation}
 \begin{split}
  A_1^\alpha&:=\frac{\div \sigma[  u]}{\rho}\cdot (  u - \va) (\rhoa -\rho) +
  (  u - \va)\cdot (\div \sigma[  u] - \div \sigma[\va])\, ,
\\
  A_2^\alpha&:=-C_\kappa\frac{  m}{\rho}\cdot \nabla \triangle(\rho - \ca) (\rho - \rhoa)\, ,
\\ 
  A_3^\alpha&:=\frac{\nabla \rho}{\rho}\otimes   u : \Big(  (  u - \va) \otimes (\rhoa   u - \rhoa \va) 
+ \frac{1}{3} \mathbb{I} \big( p(\rhoa) - p(\rho) - p'(\rho)(\rhoa - \rho)\big) \Big)\, ,
\\ 
  A_4^\alpha&:=\frac{1}{\rho} \nabla   m : (\rhoa \va - \rhoa   u) \otimes (  u - \va)\, ,
\\
  A_5^\alpha&:=-\frac{\div(   m)}{\rho} \big( p(\rhoa) - p(\rho) - p'(\rho) (\rhoa - \rho)\big)\, ,
\\
  A_6^\alpha&:=C_\kappa (\rhoa_t - \ca_t) \triangle  \rho\,   .
 \end{split}
\end{equation}
\end{lemma}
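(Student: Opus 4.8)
The plan is to compute the time derivative of $\int_{\T^3} \eta^\alpha \, dx$ directly, using the PDEs \eqref{NSK-sec} for $(\rho,u)$ and \eqref{lo} for $(\rhoa,\va,\ca)$, and to organize the resulting terms so that pressure contributions which are linear in the differences, as well as the mismatch between the two PDEs, either cancel or get absorbed into the relative-energy structure. First I would split $\eta^\alpha$ into its constituent pieces as in \eqref{def:re}: the purely $\rhoa$-dependent energy of the lower order model $h(\rhoa) + \tfrac{C_\kappa}{2}|\nabla\ca|^2 + \tfrac{\alpha C_\kappa}{2}|\rhoa-\ca|^2$, the purely $\rho$-dependent energy of NSK, the linear (in $\rhoa-\rho$) correction terms $-h'(\rho)(\rhoa-\rho) - C_\kappa \nabla\rho\cdot\nabla(\ca-\rho)$, and the relative kinetic energy $\tfrac{\rhoa}{2}|u-\va|^2$. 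Differentiating each block in time, I would substitute $\rhoa_t = -\div\ma$, $\rho_t = -\div m$, the two momentum equations (the one for $\va$ rewritten via \eqref{lo}$_3$ as $(\rhoa\va)_t + \div(\rhoa\va\otimes\va) + \nabla p(\rhoa) = \div\sigma[\va] + C_\kappa \rhoa\nabla\triangle\ca$), and integrate by parts freely since everything is on the torus and sufficiently smooth by \eqref{ass1}.

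The heart of the computation is the kinetic-energy block. I would use the material-derivative identity for $\tfrac{D}{Dt}|u-\va|^2$ exactly as in Section \ref{sec-funcrel-kine}, taking the difference of the two velocity equations and pairing with $u-\va$; the convective term produces the $A_3^\alpha$ and $A_4^\alpha$ type contributions $\frac{1}{\rho}\nabla m : (\rhoa\va - \rhoa u)\otimes(u-\va)$ together with the piece carrying $\nabla\rho\otimes u$, and the viscous terms produce $A_1^\alpha$. The Korteweg/capillary terms are where the linear corrections in $\eta^\alpha$ earn their keep: the contribution of $C_\kappa\rhoa\nabla\triangle\ca$ paired against $u-\va$, combined with the time derivative of $-C_\kappa\nabla\rho\cdot\nabla(\ca-\rho)$ and of $\tfrac{\alpha C_\kappa}{2}|\rhoa-\ca|^2 + \tfrac{C_\kappa}{2}|\nabla\ca|^2$, should collapse — after using $\alpha(\rhoa-\ca) = -\triangle\ca$ from \eqref{lo}$_3$ and integrating by parts — to $A_2^\alpha = -C_\kappa\frac{m}{\rho}\cdot\nabla\triangle(\rho-\ca)(\rho-\rhoa)$ plus the genuinely new term $A_6^\alpha = C_\kappa(\rhoa_t - \ca_t)\triangle\rho$ which reflects the fact that $\ca \ne \rhoa$ (this term vanishes for NSK itself, where $\ca$ would equal $\rho$). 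Finally the pressure terms: the $p(\rhoa)$ flux, the $p(\rho)$ flux, and the linear $h'(\rho)$ correction reorganize into the relative pressure $p(\rhoa)-p(\rho)-p'(\rho)(\rhoa-\rho)$ weighted against $\div m$ and against $\nabla\rho\otimes u$, giving $A_5^\alpha$ and the second half of $A_3^\alpha$.

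The main obstacle I anticipate is bookkeeping rather than conceptual: one must carefully track the many terms that arise from $\del_t(-h'(\rho)(\rhoa-\rho))$, namely $-h''(\rho)\rho_t(\rhoa-\rho) - h'(\rho)(\rhoa_t-\rho_t)$, and show that the $h'(\rho)(\rhoa_t-\rho_t) = -h'(\rho)(\div\ma - \div m)$ piece combines with the $\ma h'(\rhoa) - \ma h'(\rho)$ flux terms and the $h''(\rho)\div m\,(\rhoa-\rho)$ piece to leave only $A_5^\alpha$ and contributions already accounted for — this is precisely the same cancellation pattern seen in the offsetting of the term $D$ in Section \ref{sec-funcrel-tote}, but now asymmetric because the two solutions solve different equations. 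A secondary subtlety is that the momentum equation for $\va$ in \eqref{lo} contains the extra pressure-like term $\nabla(C_\kappa\tfrac{\alpha}{2}(\rhoa)^2)$; I would use \eqref{lo}$_3$ to rewrite $C_\kappa\alpha\rhoa\nabla\ca - \nabla(C_\kappa\tfrac{\alpha}{2}(\rhoa)^2) = C_\kappa\alpha\rhoa\nabla(\ca-\rhoa) = C_\kappa\rhoa\nabla\triangle\ca$ at the outset so that the lower-order model's momentum equation looks structurally like NSK's, after which the relative-energy calculation parallels the abstract one. Throughout I would invoke that all manipulations are justified by the regularity in \eqref{ass1} and the uniform bounds \eqref{ass2} together with Remark \ref{rem:max}, and that boundary terms vanish by periodicity.
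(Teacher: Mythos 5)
Your proposal is correct and follows essentially the same route as the paper's Appendix~\ref{sec:proofmc}: a direct computation of the time derivative using the two PDE systems, with the lower-order momentum equation first rewritten via \eqref{lo}$_3$ as $(\rhoa\va)_t+\div(\rhoa\va\otimes\va)+\nabla p(\rhoa)=\div\sigma[\va]+C_\kappa\rhoa\nabla\triangle\ca$, followed by cancellation of the linear correction and pressure terms and disposal of divergence terms by periodicity. The paper organizes the bookkeeping as a pointwise identity $\eta^\alpha_t+\div q^\alpha=\sum_i A_i^\alpha$ (expanding everything by brute force) rather than through the material-derivative form of the kinetic block, but this is an organizational difference only; you correctly identify all the key cancellations, including the origin of $A_6^\alpha$ from $\ca\neq\rhoa$.
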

The proof of this Lemma is given in Appendix \ref{sec:proofmc}.

\begin{remark}
 Note that all but two of the $A_i^\alpha$ in Lemma \ref{lem:fre} correspond to terms in Lemma \ref{lem:freNSK}.
The term $A_1^\alpha$ is due to viscosity and $ A_6^\alpha$ is due to the different regularizations.
These are the only terms having no counterparts in  Lemma \ref{lem:freNSK}.
\end{remark}

Our next step is to derive a representation of the rate of the reduced relative energy from Lemma \ref{lem:fre}.
\begin{lemma}[Rate of the reduced relative energy]\label{lem:rre}
Let the assumptions of Lemma \ref{lem:fre} be satisfied.
Then,
 the rate of the reduced relative energy $\eta^\alpha_{\mathrm{R}}$ defined in \eqref{def:rre} satisfies
 \begin{equation}
 \frac{\operatorname{d}}{\operatorname{d} t} \int_{\mathbb{T}^3}\eta^\alpha_{\mathrm{R}} \operatorname{d}x
 = \int_{\mathbb{T}^3}\sum_{i=1}^8 A_i^\alpha \operatorname{d}x \, ,
\end{equation}
where $A_1,\dots,A_6$ are as in Lemma \ref{lem:fre} and
\begin{equation}
 \begin{split}
  A_7^\alpha&:=\div (  m) \big( h'(\rhoa) - h'(\rho) - h''(\rho)(\rhoa-\rho) \big)\, ,
\\
  A_8^\alpha&:= \big( h'(\rhoa) - h'(\rho)\big) \big( \div(\ma) - \div(  m) \big)\, .
 \end{split}
\end{equation}
\end{lemma}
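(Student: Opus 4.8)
The plan is to follow verbatim the argument used for Lemma~\ref{lem:rreNSK}: the reduced relative energy $\eta^\alpha_{\mathrm{R}}$ and the full relative energy $\eta^\alpha$ differ only in the local internal-energy part, so, having the rate of $\eta^\alpha$ from Lemma~\ref{lem:fre} at our disposal, it suffices to compute the time derivative of the defect $\eta^\alpha_{\mathrm{R}}-\eta^\alpha$ and add it to the conclusion of Lemma~\ref{lem:fre}.

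First I would observe, by comparing \eqref{def:re2} with \eqref{def:rre}, that the capillary term $\tfrac{C_\kappa}{2}\norm{\nabla(\ca-\rho)}^2$, the penalization $\tfrac{\alpha C_\kappa}{2}\norm{\rhoa-\ca}^2$ and the relative kinetic energy $\tfrac{\rhoa}{2}|u-\va|^2$ are common to the two densities and cancel in the difference, leaving
\begin{equation*}
(\eta^\alpha_{\mathrm{R}}-\eta^\alpha)\bigl(\rho,m\mid\rhoa,\ma\bigr)
= -h(\rhoa)+h(\rho)+h'(\rho)(\rhoa-\rho)
= -\int_{\rho}^{\rhoa}(\rhoa-s)\,h''(s)\,\operatorname{d} s .
\end{equation*}
Under the regularity assumed in \eqref{ass1} (together with $h\in C^3$) this expression is continuously differentiable in $t$, pointwise in $x$.

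Next I would differentiate in time, keeping track that $\rho$ enters both through the lower endpoint and through the integrand, exactly as in \eqref{eq:re4NSK}:
\begin{equation*}
\partial_t(\eta^\alpha_{\mathrm{R}}-\eta^\alpha)
= (\rhoa-\rho)\,h''(\rho)\,\rho_t - \rhoa_t\!\int_{\rho}^{\rhoa}h''(s)\,\operatorname{d} s
= (\rhoa-\rho)\,h''(\rho)\,\rho_t - \rhoa_t\bigl(h'(\rhoa)-h'(\rho)\bigr).
\end{equation*}
Then I would insert the mass balances $\rho_t=-\div(m)$ from \eqref{NSK-sec}$_1$ and $\rhoa_t=-\div(\ma)$ from \eqref{lo}$_1$ --- which have the same form for the two systems --- and regroup the terms into
\begin{equation*}
\partial_t(\eta^\alpha_{\mathrm{R}}-\eta^\alpha)
= \div(m)\bigl(h'(\rhoa)-h'(\rho)-h''(\rho)(\rhoa-\rho)\bigr)
+\bigl(h'(\rhoa)-h'(\rho)\bigr)\bigl(\div(\ma)-\div(m)\bigr)
= A_7^\alpha+A_8^\alpha .
\end{equation*}

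Finally I would integrate this identity over $\mathbb{T}^3$ (legitimate thanks to \eqref{ass1}) and add it to the statement of Lemma~\ref{lem:fre}, obtaining $\tfrac{\operatorname{d}}{\operatorname{d} t}\int_{\mathbb{T}^3}\eta^\alpha_{\mathrm{R}}\operatorname{d} x=\int_{\mathbb{T}^3}\sum_{i=1}^{6}A_i^\alpha\operatorname{d} x+\int_{\mathbb{T}^3}(A_7^\alpha+A_8^\alpha)\operatorname{d} x=\int_{\mathbb{T}^3}\sum_{i=1}^{8}A_i^\alpha\operatorname{d} x$, which is the assertion. I do not anticipate any serious difficulty once Lemma~\ref{lem:fre} is in hand: the computation is essentially algebraic, and the only point deserving care is the differentiation of the integral with the moving endpoint displayed above. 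The one conceptual subtlety here --- that $(\rho,u)$ and $(\rhoa,\va,\ca)$ solve \emph{different} PDEs --- plays no role at this stage, precisely because the mass conservation laws of \eqref{NSK-sec} and \eqref{lo} coincide; that discrepancy has already been absorbed into the $A_i^\alpha$ of Lemma~\ref{lem:fre}.
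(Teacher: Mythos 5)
Your proposal is correct and matches the paper's own argument: the paper states that the proof of Lemma \ref{lem:rre} is analogous to that of Lemma \ref{lem:rreNSK}, and your computation is precisely that analogue, with $\rho$ playing the role of the Taylor base point $\bar\rho$ and the two mass equations supplying $\rho_t=-\div(m)$ and $\rhoa_t=-\div(\ma)$. The differentiation of the integral with the moving endpoint and the regrouping into $A_7^\alpha+A_8^\alpha$ are exactly as in \eqref{eq:re4NSK}.
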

The proof of Lemma \ref{lem:rre} is analogous to the proof of Lemma \ref{lem:rreNSK}.

\begin{lemma}[Estimate of the reduced relative energy rate]\label{lem:dest}
 Let the assumptions of Lemma \ref{lem:fre} be satisfied. Then, there exist
 a constant $C>0$ and a function $E \in L^1(0,T),$ both independent of $\alpha>\alpha_0,$ such that
the rate (of change) of the reduced relative energy satisfies the following estimate
\begin{equation}\label{grreNSK}
 \frac{\operatorname{d}}{\operatorname{d} t} \int_{\mathbb{T}^3} \eta^\alpha_{\mathrm{R}} \operatorname{d}   x \leq
 C  \int_{\mathbb{T}^3} \eta^\alpha_{\mathrm{R}}\operatorname{d}   x + \frac{E}{\alpha}\, .
\end{equation}

\end{lemma}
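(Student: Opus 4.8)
The plan is to start from Lemma~\ref{lem:rre}, which expresses $\frac{\operatorname{d}}{\operatorname{d} t}\int_{\mathbb{T}^3}\eta^\alpha_{\mathrm{R}}\operatorname{d}x=\int_{\mathbb{T}^3}\sum_{i=1}^{8}A_i^\alpha\operatorname{d}x$, and to bound the eight integrands separately. The terms $A_2^\alpha,\dots,A_5^\alpha,A_7^\alpha,A_8^\alpha$ are structurally the exact analogues of the terms $A_1,\dots,A_6$ estimated in the proof of Lemma~\ref{lem:destNSK}, the only difference being that the reference fields are now those of the fixed smooth NSK solution $(\rho,u)$, whose relevant norms are finite on $[0,T]$ by \eqref{ass1} and do not depend on $\alpha$. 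Accordingly I would reproduce those estimates: integration by parts together with the divergence form \eqref{kortstres} of the Korteweg stress for $A_2^\alpha$ (exactly as in \eqref{est:a2N}), Cauchy--Schwarz, Poincar\'e's inequality --- legitimate because $\int_{\mathbb{T}^3}(\rho-\rhoa)=\int_{\mathbb{T}^3}(\rho-\ca)=0$ by conservation of mass for both models and \eqref{ass:ic} --- and the coercivity bounds \eqref{eq:er12} for $\eta^\alpha_{\mathrm{R}}$. This bounds each of these terms by $C\int_{\mathbb{T}^3}\eta^\alpha_{\mathrm{R}}\operatorname{d}x$, with $C$ depending only on $T,C_\kappa,c_\rho,C_\rho,c_m$, on the $C^3$-norm of $h$ and the $C^2$-norm of $p$ over $[c_\rho,C_\rho]$, and on the (fixed) norms of $(\rho,u)$. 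One extra point: in $A_2^\alpha$ the ``same function'' divergence structure exploited in \eqref{est:a2N} is available only after writing $\rho-\ca=(\rho-\rhoa)+(\rhoa-\ca)$; the correction $\rhoa-\ca$ is then absorbed exactly as in the treatment of $A_6^\alpha$ below.

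For the viscous term $A_1^\alpha$, linearity of $\sigma[\cdot]$ gives $\operatorname{div}\sigma[u]-\operatorname{div}\sigma[\va]=\operatorname{div}\sigma[u-\va]$. Its first summand $\tfrac{\operatorname{div}\sigma[u]}{\rho}\cdot(u-\va)(\rhoa-\rho)$ is bounded by $\tfrac1{c_\rho}\|\operatorname{div}\sigma[u]\|_{L^\infty}\|u-\va\|_{L^2}\|\rhoa-\rho\|_{L^2}\le C\int_{\mathbb{T}^3}\eta^\alpha_{\mathrm{R}}\operatorname{d}x$, using that $\|\operatorname{div}\sigma[u]\|_{L^\infty}$ is controlled by $\|u\|_{C^2}$ via \eqref{ass1} and using \eqref{eq:er12}. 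The second summand $(u-\va)\cdot\operatorname{div}\sigma[u-\va]$, after integration over $\mathbb{T}^3$ and by parts, equals $-\int_{\mathbb{T}^3}\nabla(u-\va):\sigma[u-\va]\operatorname{d}x$, which is $\le 0$ by the dissipation identity \eqref{dissip} and the bulk-viscosity assumption \eqref{visc}; it is simply discarded. Hence $\int_{\mathbb{T}^3}A_1^\alpha\operatorname{d}x\le C\int_{\mathbb{T}^3}\eta^\alpha_{\mathrm{R}}\operatorname{d}x$.

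The crucial term is $A_6^\alpha=C_\kappa(\rhoa_t-\ca_t)\triangle\rho$, which has no counterpart in Lemma~\ref{lem:destNSK}. Differentiating the screened Poisson equation $\ca-\tfrac1\alpha\triangle\ca=\rhoa$ in time yields $\ca_t-\tfrac1\alpha\triangle\ca_t=\rhoa_t$, i.e.\ $\ca_t=G_\alpha[\rhoa_t]$ and $\rhoa_t-\ca_t=(\operatorname{Id}-G_\alpha)[\rhoa_t]=-\tfrac1\alpha\triangle\ca_t$. Using the mass equation $\rhoa_t=-\operatorname{div}\ma$, the fact that $G_\alpha$ commutes with derivatives and is self-adjoint (Lemma~\ref{lem:ea}(c)), one integrates by parts so that the remaining derivatives fall on the fixed smooth field $\rho$, and then invokes the quantitative elliptic estimates of Lemma~\ref{lem:ea}(c),(d) to pull out a factor $\tfrac1\alpha$. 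The residual time-dependent prefactor only involves space-time norms of $\va$ (equivalently of $\ma,\rhoa$) that are bounded in $L^2(0,T)$, hence in $L^1(0,T)$, uniformly in $\alpha$, by the energy inequality of Lemma~\ref{lem:rel} and Remark~\ref{rem:divva}; this is the function $E\in L^1(0,T)$. Summing all eight estimates gives \eqref{grreNSK}.

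I expect $A_6^\alpha$ --- together with the mismatch correction in $A_2^\alpha$ and the $H^1$-type quantities involving $\rhoa-\ca$ that surface in $A_8^\alpha$ --- to be the main obstacle. The subtlety is to keep every constant independent of $\alpha$ while $(\rhoa,\va,\ca)$ solves the lower-order system \eqref{lo} rather than \eqref{NSK-sec}: the naive estimate for $(\operatorname{Id}-G_\alpha)[\,\cdot\,]$ loses a power $\alpha^{1/2}$, so one must distribute the derivatives so that the worst ones act on the smooth NSK solution, use the sharper $H^2$ version of the elliptic approximation in Lemma~\ref{lem:ea}(d), and relegate the genuinely $\alpha$-dependent remainder to $E$ by exploiting the uniform-in-$\alpha$ energy dissipation. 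Everything else is a routine repetition of the computations already carried out for the pure Navier--Stokes--Korteweg case.
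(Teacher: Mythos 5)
Your proposal is correct and follows essentially the same route as the paper's proof: the same splitting of the eight terms into routine analogues of Lemma \ref{lem:destNSK}, the same treatment of $A_1^\alpha$ by discarding the nonnegative viscous dissipation via \eqref{dissip}--\eqref{visc}, the same use of $\ca$ as an intermediate quantity in $A_2^\alpha$ and $A_8^\alpha$ so that the reduced relative energy (which controls $|\rho-\ca|_{H^1}$ and $\alpha\|\rhoa-\ca\|_{L^2}^2$) becomes applicable, and the same handling of $A_6^\alpha$ via $\ca_t=G_\alpha[\rhoa_t]$, the splitting $\ma=m+(\ma-m)$, and Lemma \ref{lem:ea}, with the $\alpha^{-1}$ remainder weighted by $L^1(0,T)$ quantities supplied by the energy inequality of Lemma \ref{lem:rel} and Remark \ref{rem:divva}. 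The only cosmetic deviation is that for the $(\rhoa-\ca)$ correction in $A_2^\alpha$ the paper uses the screened Poisson equation directly rather than routing it through the $G_\alpha$ argument of $A_6^\alpha$, but both are absorbed by the $\frac{\alpha C_\kappa}{2}\|\rhoa-\ca\|^2$ part of $\eta^\alpha_{\mathrm{R}}$ in the same way.
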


\begin{proof}
 The proof is based on Lemma \ref{lem:rre} and estimates of the $A_i^\alpha.$ For brevity we suppress the $t$ dependency of all quantities in the proof.
The terms $A_3^\alpha,\, A_4^\alpha,\, A_5^\alpha,\, A_7^\alpha$ can be estimated analogous to the corresponding terms in the proof of Lemma \ref{lem:destNSK}.
Let us estimate the remaining $A_i^\alpha$ one by one:
we have, using integration by parts,
\begin{multline}
\label{est:a1N}
 \int_{{\T^3}} A_1^\alpha\operatorname{d}   x \leq \frac{\Norm{  u}_{C^2}}{c_\rho} \Norm{  u - \va}_{L^2({\T^3})}
 \Norm{\rho - \rhoa}_{L^2({\T^3})}
  -\int_{\T^3} \big( \nabla   u - \nabla \va\big) 
  (\sigma[  u] - \sigma [\va])\operatorname{d}   x\\
\leq \frac{\Norm{  u}_{C^2}}{c_\rho}
\Norm{  u - \va}_{L^2({\T^3})} \Norm{\rho - \bar \rho}_{L^2({\T^3})}\\
\leq \frac{\Norm{  u}_{C^2}}{c_\rho}\max\Big\{\frac{1}{c_\rho}, \frac{4C_P}{C_\kappa} \Big\} 
\int_{\T^3} \eta_{\mathrm{R}}^\alpha \operatorname{d} x
 \end{multline}
as, by the properties of the Lam\'{e} coefficients,
$\nabla(\va -  u) : (\sigma[\va] - \sigma [ u]) \geq 0.$
For $A_2^\alpha$ we find using \eqref{kortstres}
\begin{equation}
\label{est:a21}
 \begin{split}
 & \int_{\mathbb{T}^3} A_2^\alpha\operatorname{d}   x \\
=& \int_{\mathbb{T}^3} -C_\kappa  u \cdot ( \nabla \triangle \rho  - \nabla \triangle \ca) (\rho - \rhoa) \operatorname{d}   x\\
=& \int_{\mathbb{T}^3} -C_\kappa (\rho - \ca)   u \cdot \nabla \triangle(  \rho  -  \ca)  -C_\kappa (\ca - \rhoa)   u \cdot \nabla \triangle(  \rho  -  \ca) \operatorname{d}   x\\
=& C_\kappa \int_{\mathbb{T}^3} -    u\cdot \div\Big(\Big( (  \rho  -  \ca)   \triangle(  \rho  -  \ca) + \frac{|\nabla (  \rho  -  \ca)|^2 }{2}\Big) \mathbb{I} 
- \nabla (  \rho  -  \ca) \otimes \nabla (  \rho  -  \ca) 
\Big)\\
& \qquad-(\ca - \rhoa)   u \cdot \nabla \triangle \rho  +  (\ca - \rhoa)   u \cdot \nabla \triangle  \ca \operatorname{d}   x
\\
=& C_\kappa\int_{\mathbb{T}^3}  \div (  u)\Big( (  \rho  -  \ca)   \triangle(  \rho  -  \ca) + \frac{|\nabla (  \rho  -  \ca)|^2 }{2}\Big)  
- \nabla  u : \nabla (  \rho  -  \ca) \otimes \nabla (  \rho  -  \ca) 
\\
& \qquad-(\ca - \rhoa)   u \cdot \nabla \triangle \rho  + \frac{1}{\alpha} \triangle \ca   u \cdot \nabla \triangle  \ca \operatorname{d}   x\\
 \leq& C_\kappa \Norm{\nabla \div(  u)}_{L^\infty}\Norm{\rho - \ca}_{H^1}^2  + \frac{3C_\kappa}{2} \Norm{\nabla   u}_{L^\infty} \Norm{\rho - \ca}_{H^1}^2\\
&+ C_\kappa \|\ca - \rhoa\|_{L^2} \Norm{\rho}_{C^3} \Norm{  u}_{C^0} - \int_{\mathbb{T}^3} \frac{C_\kappa}{\alpha} \div(  u) (\triangle \ca)^2 \operatorname{d}   x\\
\leq& 3 \Norm{  u}_{C^2} C_\kappa \Norm{\rho - \ca}_{H^1}^2 
+  C_\kappa \|\ca - \rhoa\|_{L^2} \Norm{\rho}_{C^3} \Norm{  u}_{C^0}
+ C_\kappa \alpha \Norm{  u}_{C^1} \Norm{\rhoa -\ca}_{L^2}^2\\
\leq& \big(6 \Norm{  u}_{C^2} C_P + \frac{2}{\alpha} \Norm{\rho}_{C^3} \Norm{  u}_{C^0} \big) \int_{\T^3} \eta_{\mathrm{R}}^\alpha \operatorname{d} x
\, .
 \end{split}
\end{equation}
For $A_6^\alpha$ we  find 
\begin{multline}\label{est:a4}
   \int_{\mathbb{T}^3} A_6^\alpha\operatorname{d}   x
= C_\kappa\int_{\mathbb{T}^3} \triangle \rho (\ca_t - \rhoa_t )\operatorname{d}   x\\
= C_\kappa\int_{\mathbb{T}^3} \triangle \rho (G_\alpha[\div(\ma)] - \div(\ma))\operatorname{d}   x
= - C_\kappa \int_{\mathbb{T}^3} \nabla \triangle\rho \cdot (G_\alpha [\ma] - \ma) \operatorname{d}   x\\
= - C_\kappa \int_{\mathbb{T}^3} \nabla \triangle\rho \cdot (G_\alpha [  m] -   m) \operatorname{d}   x  - C_\kappa \int_{\mathbb{T}^3} \nabla \triangle\rho \cdot (G_\alpha [\ma -   m] - (\ma -   m)) \operatorname{d}   x
\end{multline}
such that elliptic regularity for the operator $G_\alpha$ implies
\begin{multline}\label{est:a41}
\big| \int_{\mathbb{T}^3} A_6^\alpha\operatorname{d}   x\big|\leq
\Norm{\rho}_{C^3} \frac{1}{\alpha} \norm{  m}_{H^2} + 2 C_\kappa\Norm{\rho}_{C^3} \Norm{\ma -   m}_{L^2}\\
\leq \Norm{\rho}_{C^3} \frac{1}{\alpha} \norm{  m}_{H^2} + 2 C_\kappa\Norm{\rho}_{C^3} \frac{16C_P c_m^2}{c_\rho^2}\int_{\T^3} \eta_{\mathrm{R}}^\alpha \operatorname{d} x\, ,
\end{multline}
see Lemma \ref{lem:ea}.
In order to estimate $A_8^\alpha$ we decompose it as $-A_8^\alpha =A_{81}^\alpha+A_{82}^\alpha$ with
\begin{equation}
 \begin{split}
A_{81}^\alpha &= \big( h'(\rho) - h'(\ca)\big) \big(\div(\ma) - \div(  m)\big)\, ,\\
A_{82}^\alpha &= \big( h'(\ca) - h'(\rhoa)\big) \big(\div(\ma) - \div(  m)\big)\, .
 \end{split}
\end{equation}
Using integration by parts we find
\begin{multline}\label{est:a81}
   \int_{\mathbb{T}^3} A_{81}^\alpha\operatorname{d}   x
= \int_{\mathbb{T}^3} \big( h''(\rho) \nabla \rho - h''(\ca) \nabla \ca\big)
\cdot\big(  m - \ma\big) \operatorname{d}   x\\
= \int_{\mathbb{T}^3} \big( h''(\rho) \nabla \rho - h''(\ca) \nabla \rho\big)
\cdot \big(  m - \ma\big) \operatorname{d}   x
+ \int_{\mathbb{T}^3} \big( h''(\ca) \nabla \rho - h''(\ca) \nabla \ca\big)
\cdot \big(  m - \ma\big) \operatorname{d}   x
\end{multline}
such that
\begin{equation}\label{est:a811}
\big| \int_{\mathbb{T}^3} A_{81}^\alpha\operatorname{d}   x\big|\leq
h_M \Norm{\rho}_{C^1} \Norm{\rho - \ca}_{L^2} \Norm{\ma -   m}_{L^2}
+ h_M \norm{\rho - \ca}_{H^1} \Norm{\ma -   m}_{L^2},
\end{equation}
because of Remark \ref{rem:max}.
We infer
\begin{equation}\label{est:a812}
\big| \int_{\mathbb{T}^3} A_{81}^\alpha\operatorname{d}   x\big|\leq
h_M( \Norm{\rho}_{C^1} + 1) \frac{ C_P c_m^2}{4C_\kappa c_\rho^2} \int_{\T^3} \eta_{\mathrm{R}}^\alpha \operatorname{d} x\, ,
\end{equation}
 due to \eqref{eq:er12}.
Finally, we turn to $A_{82}^\alpha$ and obtain 
\begin{equation*}
 A_{82}^\alpha = (h'(\ca)-h'(\rhoa)) \rhoa \div \va + (h'(\ca)-h'(\rhoa))\nabla \rhoa \cdot \va
- (h'(\ca)-h'(\rhoa)) \div   m
\end{equation*}
such that, using Young's inequality,
\begin{multline}
 \label{est:a821}
\big| \int_{\mathbb{T}^3} A_{82}^\alpha\operatorname{d}   x\big|\leq
\frac{C_\kappa\alpha}{2} \Norm{\rhoa-\ca}_{L^2(\mathbb{T}^3)}^2 \\
+ \frac{2h_M}{\alpha C_\kappa} \big( \Norm{\div   m}_{L^2(\mathbb{T}^3)}^2
+ C_\rho \Norm{\div \va}_{L^2(\mathbb{T}^3)}^2\Big)
+ \big| \int_{\mathbb{T}^3} A_{83}^\alpha\operatorname{d}   x\big|\, ,
\end{multline}
where $A_{83}^\alpha:=(h'(\ca)-h'(\rhoa))\nabla \rhoa \cdot \va,$ for $\alpha$ sufficiently large 
and some constant $C_\rho>0$, according to \eqref{ass2}.
We have, using integration by parts,
\begin{equation}\label{est:a83}
 \begin{split}
  \big| \int_{\mathbb{T}^3} A_{83}^\alpha\operatorname{d}   x\big|
&= \big| \int_{\mathbb{T}^3}h'(\ca)\nabla (\rhoa - \ca) \cdot \va - \nabla(h(\rhoa) -h(\ca)) \cdot \va \operatorname{d}   x\big|\\
&\leq \frac{C_\kappa\alpha}{2} \Norm{\rhoa-\ca}_{L^2(\mathbb{T}^3)}^2 + \Big(\frac{3h_M^2}{C_\kappa \alpha} \Norm{\nabla \ca}_{L^2(\mathbb{T}^3)}^2 + \frac{2}{C_\kappa \alpha}\Norm{\div \va}_{L^2(\mathbb{T}^3)}^2\big)\, .
 \end{split}
\end{equation}
Note that $\frac{C_\kappa\alpha}{2} \Norm{\rhoa-\ca}_{L^2(\mathbb{T}^3)}^2 \leq \int_{\T^3} \eta_{\mathrm{R}}^\alpha \operatorname{d} x.$

The assertion of the lemma follows upon combining \eqref{est:a21},  \eqref{est:a41},
\eqref{est:a812},\eqref{est:a821}, and \eqref{est:a83} because of \eqref{ass1}, \eqref{ass2}, and Remark \ref{rem:divva}.
\end{proof}

\begin{remark}[Boundary conditions]\label{rem:bc}
 If we used natural boundary conditions instead of periodic ones,
 we would obtain additional (non-vanishing) boundary terms in  the estimate of $A_6^\alpha$,
and it is not clear how to estimate them properly.
\end{remark}

\begin{theorem}[Model convergence]\label{thrm:mt2}
Let $T,\mu,C_\kappa>0$ and $\lambda \in \mathbb{R}$ be fixed such that \eqref{visc} is satisfied.
Let initial data $(\rho_0,  u_0) \in H^3(\mathbb{T}^3)\times H^2(\mathbb{T}^3)$ be given 
such that a strong solution $(\rho,   u)$ of \eqref{NSK-sec} exists in the sense of \eqref{ass1}.
Let the sequence $(\rhoa_0,\va_0)$ be such that \eqref{ass:ic} is satisfied and that strong solutions 
$(\rhoa, \va,\ca)$ of \eqref{lo} exist in the sense of \eqref{ass1}.
Let, in addition, \eqref{ass2} be satisfied.
Then, there exists a constant $C>0$ depending only on $T,\lambda, \mu,C_\kappa, \rho_0,  u_0$ 
such that for sufficiently large $\alpha$ the following estimate holds:
 \begin{equation*}
\frac{C_\kappa}{2}  \norm{\rho - \ca}_{L^\infty(0,T;H^1(\mathbb{T}^3))}^2 + 
\frac{C_\kappa \alpha}{2}  \Norm{\rhoa - \ca}_{L^\infty(0,T;L^2(\mathbb{T}^3))}^2 +
\frac{c_\rho}{2} \Norm{\va -   u}_{L^\infty(0,T;L^2(\mathbb{T}^3))}^2
\leq \frac{C}{\alpha}\,   .
\end{equation*}
\end{theorem}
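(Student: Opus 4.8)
The plan is to run a Gronwall argument on the reduced relative energy $\eta^\alpha_{\mathrm{R}}$, exactly as in the proof of Theorem~\ref{thrm:mt1}, but now keeping track of the extra $O(1/\alpha)$ source term. First I would invoke Lemma~\ref{lem:dest}, which gives
\[
\frac{\operatorname{d}}{\operatorname{d} t} \int_{\mathbb{T}^3} \eta^\alpha_{\mathrm{R}} \operatorname{d} x \leq C \int_{\mathbb{T}^3} \eta^\alpha_{\mathrm{R}} \operatorname{d} x + \frac{E}{\alpha}\, ,
\]
with $C>0$ and $E \in L^1(0,T)$ both independent of $\alpha > \alpha_0$. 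Applying the integral form of Gronwall's inequality yields
\[
\int_{\mathbb{T}^3} \eta^\alpha_{\mathrm{R}}(t) \operatorname{d} x \leq e^{Ct}\Big( \int_{\mathbb{T}^3} \eta^\alpha_{\mathrm{R}}(0) \operatorname{d} x + \frac{1}{\alpha}\int_0^T E(s)\operatorname{d} s \Big)\, ,
\]
uniformly for $t \in [0,T]$.

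Next I would estimate the initial term $\int_{\mathbb{T}^3} \eta^\alpha_{\mathrm{R}}(0) \operatorname{d} x$. Writing out the definition \eqref{def:rre} at $t=0$, this splits into $\tfrac{C_\kappa}{2}|\nabla(\ca_0 - \rho_0)|_{L^2}^2 + \tfrac{\alpha C_\kappa}{2}\|\rhoa_0 - \ca_0\|_{L^2}^2 + \tfrac12\int \rhoa_0 |u_0 - \va_0|^2$. The last summand is $O(1/\alpha)$ directly from \eqref{ass:ic} together with the uniform bound $\rhoa_0 \le C_\rho$. For the middle summand, since $\ca_0 = G_\alpha[\rhoa_0]$, estimate (c) of Lemma~\ref{lem:ea} gives $\|\rhoa_0 - \ca_0\|_{L^2}^2 \le \tfrac{2}{\alpha}|\rhoa_0|_{H^1}^2$, and $\|\rhoa_0\|_{H^1} = O(1)$ by \eqref{ass:ic}, so this term is also $O(1/\alpha)$ after multiplication by $\alpha$. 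For the first summand I would use the triangle inequality $|\nabla(\ca_0 - \rho_0)|_{L^2} \le |\nabla(\ca_0 - \rhoa_0)|_{L^2} + |\nabla(\rhoa_0 - \rho_0)|_{L^2}$; the second piece is $O(\alpha^{-1/2})$ from \eqref{ass:ic}, while for the first piece $\nabla \ca_0 - \nabla \rhoa_0 = \nabla(G_\alpha[\rhoa_0] - \rhoa_0) = (G_\alpha - \operatorname{Id})[\nabla \rhoa_0]$ by assertion (c) of Lemma~\ref{lem:ea}, and since $\nabla\rhoa_0 \in H^1$ with $\|\rhoa_0\|_{H^2} = O(1)$, assertion (d) of Lemma~\ref{lem:ea} gives $\|\nabla \ca_0 - \nabla \rhoa_0\|_{L^2}^2 \le \tfrac{1}{\alpha^2}|\nabla \rhoa_0|_{H^2}^2 = O(\alpha^{-2})$. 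Hence $\int_{\mathbb{T}^3}\eta^\alpha_{\mathrm{R}}(0)\operatorname{d} x = O(1/\alpha)$.

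Combining these, $\int_{\mathbb{T}^3}\eta^\alpha_{\mathrm{R}}(t)\operatorname{d} x \le C/\alpha$ uniformly on $[0,T]$, with $C$ depending only on $T,\lambda,\mu,C_\kappa,\rho_0,u_0$ (through the uniform bounds $c_\rho, C_\rho, c_m$ and the prescribed decay rates in \eqref{ass:ic}). Finally I would invoke the coercivity estimate \eqref{eq:er12}$_1$, namely
\[
\int_{\mathbb{T}^3}\eta^\alpha_{\mathrm{R}}\operatorname{d} x \ge \frac{C_\kappa}{2}|\ca - \rho|_{H^1}^2 + \frac{\alpha C_\kappa}{2}\|\ca - \rhoa\|_{L^2}^2 + \frac{c_\rho}{2}\|u - \va\|_{L^2}^2\, ,
\]
taking the supremum over $t \in [0,T]$ on the left-hand side of the three norms, to conclude the claimed bound. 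I do not expect a genuine obstacle here: the work has been front-loaded into Lemma~\ref{lem:dest} and the elliptic estimates of Lemma~\ref{lem:ea}. The only point requiring a little care is checking that the initial data assumptions \eqref{ass:ic} are precisely strong enough — in particular that $\|\rhoa_0\|_{H^3} = O(1)$ is what makes $\|\nabla\rhoa_0\|_{H^2} = O(1)$ available for assertion (d) — so that every contribution to $\eta^\alpha_{\mathrm{R}}(0)$ is genuinely $O(1/\alpha)$ rather than merely $O(\alpha^{-1/2})$ in a way that would survive multiplication by $\alpha$.
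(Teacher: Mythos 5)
Your overall strategy is exactly the paper's: Gronwall on Lemma~\ref{lem:dest}, an $O(\alpha^{-1})$ bound on $\int_{\T^3}\rre(0,x)\,dx$ via the elliptic estimates of Lemma~\ref{lem:ea} together with \eqref{ass:ic}, and then the coercivity \eqref{eq:er12}$_1$. However, there is one concrete error in your treatment of the initial data, and it sits precisely at the point you flag at the end as "requiring a little care." For the middle summand $\tfrac{\alpha C_\kappa}{2}\Norm{\rhoa_0-\ca_0}_{L^2}^2$ you invoke assertion (c) of Lemma~\ref{lem:ea}, which gives
\begin{equation*}
\frac{\alpha C_\kappa}{2}\Norm{\rhoa_0-\ca_0}_{L^2(\T^3)}^2 \;\le\; \frac{\alpha C_\kappa}{2}\cdot\frac{2}{\alpha}\,|\rhoa_0|_{H^1(\T^3)}^2 \;=\; C_\kappa\,|\rhoa_0|_{H^1(\T^3)}^2 \;=\; \mathcal{O}(1)\, ,
\end{equation*}
not $\mathcal{O}(\alpha^{-1})$ as you claim: the factor $\alpha$ in front exactly cancels the $\alpha^{-1}$ gain from (c). As written, this would only yield $\int\rre(t)\,dx=\mathcal{O}(1)$ and hence no convergence rate at all.

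The fix is immediate and is what the paper does: since $\Norm{\rhoa_0}_{H^2}\le\Norm{\rhoa_0}_{H^3}=\mathcal{O}(1)$ by \eqref{ass:ic}, the \emph{stronger} estimate (d) of Lemma~\ref{lem:ea} applies to $\ca_0=G_\alpha[\rhoa_0]$ and gives $\Norm{\rhoa_0-\ca_0}_{L^2}^2\le\alpha^{-2}|\rhoa_0|_{H^2}^2=\mathcal{O}(\alpha^{-2})$, which after multiplication by $\alpha$ is indeed $\mathcal{O}(\alpha^{-1})$. You already use (d) correctly for the gradient term $\nabla(\ca_0-\rhoa_0)=(G_\alpha-\operatorname{Id})[\nabla\rhoa_0]$, so the ingredient is on your table; you simply reached for the wrong estimate for the zeroth-order difference. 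With that single substitution the rest of your argument (the $\mathcal{O}(\alpha^{-1/2})$ bounds on $\Norm{\rhoa_0-\rho_0}_{H^1}$ and $\Norm{\va_0-u_0}_{L^2}$, the Gronwall step, and the coercivity of $\rre$) goes through and reproduces the paper's proof.
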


\begin{proof}
Integrating \eqref{grreNSK} in time we obtain
 \begin{equation}
 \int_{\mathbb{T}^3} \eta^\alpha_{\mathrm{R}}(t,  x) \operatorname{d}   x
 \leq C \int_0^t\int_{\mathbb{T}^3} \eta^\alpha_{\mathrm{R}}(s,  x)\operatorname{d}   x \operatorname{d} s 
+ \int_{\mathbb{T}^3} \rre(0,  x) \operatorname{d}   x+ \int_0^t\frac{|E(s)|}{\alpha}\operatorname{d} s
\end{equation}
such that Gronwall's inequality implies
 \begin{equation}\label{Gw}
 \int_{\mathbb{T}^3} \rre(t,  x) \operatorname{d}   x \leq
 \Big(  \int_{\mathbb{T}^3} \rre(0,  x) \operatorname{d}   x+
 \int_0^t\frac{|E(s)|}{\alpha}\operatorname{d} s\Big) e^{Ct}                      .
\end{equation}
In order to infer the assertion of the theorem, from \eqref{Gw} we need to estimate the integral of $\rre(0,\cdot).$
 Because of \eqref{ic} we have
\begin{multline}\label{re0}
 \int_{\mathbb{T}^3} \rre(0,  x) \operatorname{d}   x \leq
 \frac{\alpha C_\kappa}{2} \Norm{\ca(0,\cdot) - \rhoa_0}_{L^2(\mathbb{T}^3)}^2 \\
 + \frac{C_\kappa}{2C_P} \|\ca(0,\cdot) - \rho_0\|_{H^1(\mathbb{T}^3)}^2
+ C_\rho  \Norm{\va_0 -   u_0}_{L^2(\mathbb{T}^3)}^2.
\end{multline}
By definition
$\ca(0,\cdot)=G_\alpha[\rho_0^\alpha]$ such that
\begin{equation}\label{ereg}
 \begin{split}
   \Norm{\ca(0,\cdot) - \rhoa_0}_{L^2(\mathbb{T}^3)}^2 &\leq \frac{1}{\alpha^2} \norm{ \rhoa_0}_{H^2(\mathbb{T}^3)}^2,\\
\frac{1}{C_P}\|\ca(0,\cdot) - \rho_0\|_{H^1(\mathbb{T}^3)}^2 &\leq \frac{2}{\alpha^2}
\norm{ \rhoa_0}_{H^3(\mathbb{T}^3)}^2 +  \Norm{\rhoa_0 - \rho_0}_{H^1(\mathbb{T}^3)}^2.
 \end{split}
\end{equation}
Inserting \eqref{ereg} and \eqref{ass:ic} into \eqref{re0} we obtain
\begin{equation}\label{re5}
 \int_{\mathbb{T}^3} \rre(0,  x) \operatorname{d}   x = \mathcal{O}(\alpha^{-1})\, .
\end{equation}
Upon using \eqref{re5} in \eqref{Gw} we find
\begin{equation}\label{Gw1}
 \int_{\mathbb{T}^3} \rre(t,  x) \operatorname{d}   x \leq 
 \Big(   \frac{C}{\alpha}+ \frac{1}{\alpha}\| E\|_{L^1(0,t)}\Big) e^{Ct}                      
\end{equation}
for some constant $C>0$, independent of $\alpha$.
Equation \eqref{Gw1} implies the assertion of the theorem as
\begin{multline}\label{Gw2}
 \frac{C_\kappa}{2C_P}  \|\rho(t,\cdot) - \ca(t,\cdot)\|_{H^1(\mathbb{T}^3)}^2 + 
\frac{C_\kappa \alpha}{2}  \Norm{\rhoa(t,\cdot) - \ca(t,\cdot)}_{L^2(\mathbb{T}^3)}^2 \\+
\frac{c_\rho}{2} \Norm{\va(t,\cdot) -   u(t,\cdot)}_{L^2(\mathbb{T}^3)}^2 \leq
\int_{\mathbb{T}^3} \rre(t,  x) \operatorname{d}   x      \,           .
\end{multline}
\end{proof}


\begin{remark}[Parameter dependence]\label{rem:cap2}
 All the estimates derived in this section heavily rely on the capillary regularization terms which are
 scaled with $C_\kappa.$ 
In particular, the estimate in Theorem  \ref{thrm:mt2} depends sensitively on $C_\kappa.$
Indeed, the constants $C$ in  Theorem  \ref{thrm:mt2} scales like $\exp(1/C_\kappa)$  
for  $C_\kappa \rightarrow 0.$
Thus, the results established here are only helpful in the diffuse case $C_\kappa>0$ and cannot
be transferred to the sharp interface limit case $C_\kappa \rightarrow 0.$
\end{remark}



\appendix

\section{Noether's theorem and the Korteweg stress}
\label{app-noether}

In this section we show that the relation \eqref{stresshyp}  (or \eqref{formula}) for the Euler-Korteweg system \eqref{euler-kort} can
be seen as a direct consequence of Noether's theorem and the invariance of the Korteweg functional
\begin{equation}
\label{kortfunctional}
 \cE_\Omega (\rho) = \int_\Omega F(\rho, \nabla \rho) \, d\rho
\end{equation}
under translations.
This relation is pointed out by Benzoni-Gavage \cite[App]{B10} and is proved there under the hypothesis that $\rho(\cdot)$ is an extremum of $\cE_\Omega (\rho)$ 
under the constraint of prescribed mass $\int \rho$. 

Here, we show that it is a direct consequence of the invariance under translations for the functional $ \cE_\Omega (\rho)$ with no further assumptions.
Note that,  if we perform the change of variables
$$
\rho^\star = \rho  \, , \quad x^\star = x + (h_1 , ... , h_d)
$$
and if $\Omega^\star$ is the image of $\Omega$, then the functional \eqref{kortfunctional} satisfies $\cE_{\Omega^\star}  (\rho^\star ) = \cE_\Omega (\rho)$.

Indeed, the proof of \eqref{formula} and the existence of the Korteweg tensor is an application of the results associated 
with variations of functionals on variable domains 
that lead to Noether's theorem (see Gelfand and Fomin \cite[Sec 37]{GF63}).
We fix $k$, and consider the change of variables
\begin{equation}
\label{specialvar}
\begin{cases}
x_i^* = x_i +  h_k  \delta_{ik}  & i= 1, ..., d\, , \\
\rho^* = \rho
\end{cases}
\end{equation}
(with no summation over the index $k$). 
The first variation of the functional $\cE_\Omega (\rho)$ along variations compatible with \eqref{specialvar} is zero.
Applying Theorem 1  in \cite[Sec 37.4]{GF63},  which computes the first variation of $\cE_\Omega (\rho)$ when the variation
of the domain is taken into account,  we see that the invariance together with the arbitrariness of the domain $\Omega$ implies the identity
$$
- \frac{\del \rho}{\del x_k} \left ( F_\rho - \sum_i \frac{\del}{\del x_i} F_{q_i} \right ) - \sum_i \frac{\del}{\del x_i} \left ( F_{q_i} \frac{\del \rho}{\del x_k} - F \delta_{ik} \right ) 
= 0\, ,
$$
for $k = 1, ..., d$.
After an integration by parts, this formula readily yields
\begin{equation}
\label{formulaapp}
- \rho \frac{\del}{\del x_k} \left ( F_\rho - \sum_i \frac{\del}{\del x_i} F_{q_i} \right ) = \sum_i \frac{\del}{\del x_i} 
\left ( \big (F - \rho F_\rho + \rho \div F_q  \big ) \delta_{ik} - F_{q_i} \frac{\del \rho}{\del x_k} \right )
\end{equation}
which coincides with \eqref{formula} or  \eqref{stresshyp}.


\section{The relative energy transport identity for the Euler-Korteweg system}\label{subsec:relenKort}
We present here the relative entropy calculation for the Euler-Korteweg system when both $(\rho, \, m)$ and $(\bar \rho, \, \bar m)$ are smooth solutions of \eqref{eq:Kortcap2};
the framework of weak solution $(\rho, \, m)$ is discussed in Section  \ref{subsec:weakdissKort}. This calculation will determine explicitly the relative-energy
flux  (a term  omitted in the integral version valid for weak solutions).

In what follows, we often omit the dependence of the potential energy
\begin{equation*}
F(\rho,\nabla\rho)= h(\rho) + \frac12 \kappa(\rho)|\nabla\rho|^2,
\end{equation*}
 the stress 
\begin{equation*}
S = - \big( \rho F_\rho(\rho,\nabla\rho) -  F(\rho,\nabla\rho) -\rho \dx (F_q(\rho,\nabla\rho))  \big)\mathbb{I} - F_q(\rho,\nabla\rho)\otimes\nabla\rho
\end{equation*}
 and their derivatives on the variables $\rho$ and $\nabla\rho$, and we denote with $\bar F$ and $\bar S$ these quantities when evaluated for the solution $\bar \rho$.

We start by analyzing the linear part of the potential energy about $\bar \rho$,  that is:
\begin{equation*}
\bar F_\rho (\rho - \bar\rho) + \bar F_q \cdot \nabla(\rho - \bar\rho) = 
\Big (\bar F_\rho  - \dx\big ( \bar F_q\big) \Big )  (\rho - \bar\rho) + \dx \big ( \bar F_q (\rho - \bar\rho) \big)\, . 
\end{equation*}
%
 Using the equation satisfied by the difference $(\rho - \bar\rho)$, after some calculation
we obtain
\begin{align}
& \partial_t \Big  ( \bar F_\rho (\rho - \bar\rho) + \bar F_q \cdot \nabla(\rho - \bar\rho) \Big ) + \dx \Big (\big (\bar F_\rho  - \dx  ( \bar F_q ) \big )   (m - \bar m) +   \bar F_q  \dx (m - \bar m) \Big) \nonumber\\
&\ =\nabla \big (\bar F_\rho  - \dx  ( \bar F_q ) \big) \cdot (m - \bar m)
- \big(\bar F_{\rho\rho}\dx \bar m + \bar F_{\rho q}\cdot \nabla(\dx\bar m) \big)(\rho -\bar \rho) \nonumber\\
&\ \ - \bar F_{q \rho}\cdot\nabla(\rho - \bar\rho)\dx\bar m
- \bar F_{q_j q_i} \partial_{x_j}(\rho -\bar\rho)\partial_{x_i}(\dx\bar m)\, .
\label{eq:linearKorGen1}
\end{align}
Using
\begin{equation}\label{eq:potenKortGen}
\partial_t F(\rho,\nabla\rho) + \dx  \Big ( 
     m\big( F_\rho- \dx   (F_q ) \big)    + F_q \dx m 
          \Big ) = m\cdot \nabla \big(  F_\rho- \dx   (F_q )\big)\, 
\end{equation}
 for both $(\rho, \, m)$ and $(\bar \rho, \, \bar m)$ and \eqref{eq:linearKorGen1}, we get
\begin{align*}
& \partial_t F(\rho, \nabla\rho\left | \bar \rho, \nabla\bar \rho \right. ) + \dx \Big (m\big (  F_\rho  - \dx  (   F_q )- (\bar F_\rho  - \dx  ( \bar F_q )) \big )     +    ( F_q - \bar F_q ) \dx m  \Big) \nonumber\\
&\ =m \cdot \nabla \big ( F_\rho  - \dx  (   F_q )- (\bar F_\rho  - \dx  ( \bar F_q )) \big) \nonumber\\
&\ \  + \big(\bar F_{\rho\rho}\dx \bar m + \bar F_{\rho q}\cdot \nabla(\dx\bar m) \big)(\rho -\bar \rho)  
+ \bar F_{q \rho}\cdot\nabla(\rho - \bar\rho)\dx\bar m \nonumber\\
&\ \ 
+ \bar F_{q_j q_i} \partial_{x_j}(\rho -\bar\rho)\partial_{x_i}(\dx\bar m) \nonumber\\
&\ = \rho \left ( \frac{m}{\rho} - \frac{\bar m }{\bar\rho} \right) \cdot \nabla \big ( F_\rho  - \dx  (   F_q )- (\bar F_\rho  - \dx  ( \bar F_q )) \big) \nonumber\\
&\ \  + \frac{\bar m }{\bar\rho} \cdot \rho \nabla \big ( F_\rho  - \dx  (   F_q )- (\bar F_\rho  - \dx  ( \bar F_q )) \big)
\nonumber\\
&\ \  + \big(\bar F_{\rho\rho}\dx \bar m + \bar F_{\rho q}\cdot \nabla(\dx\bar m) \big)(\rho -\bar \rho)  
+ \bar F_{q \rho}\cdot\nabla(\rho - \bar\rho)\dx\bar m \nonumber\\
&\ \ + \bar F_{q_j q_i} \partial_{x_j}(\rho -\bar\rho)\partial_{x_i}(\dx\bar m) \nonumber\\ 
&\ = D + I_1 + I_2\, ,
\end{align*}
where
\begin{align*}
  D &= \rho \left ( \frac{m}{\rho} - \frac{\bar m }{\bar\rho} \right) \cdot \nabla \big ( F_\rho  - \dx  (   F_q )- (\bar F_\rho  - \dx  ( \bar F_q )) \big)\, ;
\\
  I_1 &= \frac{\bar m }{\bar\rho} \cdot \rho \nabla \big ( F_\rho  - \dx  (   F_q )- (\bar F_\rho  - \dx  ( \bar F_q )) \big)\, ;
\\
  I_2 &= \big(\bar F_{\rho\rho}\dx \bar m + \bar F_{\rho q}\cdot \nabla(\dx\bar m) \big)(\rho -\bar \rho)  
+ \bar F_{q \rho}\cdot\nabla(\rho - \bar\rho)\dx\bar m  \\
&\ 
  + \bar F_{q_j q_i} \partial_{x_j}(\rho -\bar\rho)\partial_{x_i}(\dx\bar m)\, .
\end{align*}


Recalling
\begin{equation}\label{eq:Sigmagenrewrit}
\rho \partial_{x_j} \big ( F_\rho  - \dx  (   F_q ) \big) = - \partial_{x_k}S_{kj}\, ,
\end{equation}
%
%
$I_1$ is rewritten as:
\begin{align*}
 I_1 &= \frac{\bar m_j }{\bar\rho}  \Big( \rho  \partial_{x_j} \big ( F_\rho  - \dx  (   F_q ) \big )-  \bar\rho \partial_{x_j} \big (\bar F_\rho  - \dx  ( \bar F_q ) \big) \Big) 
 - (\rho -\bar\rho) \frac{\bar m}{\bar\rho}\cdot\nabla (\bar F_\rho  - \dx  ( \bar F_q )) 
 \\
 &= -  \frac{\bar m_j }{\bar\rho} \partial_{x_k}\big ( S_{kj} - \bar S_{kj}\big) - (\rho -\bar\rho) \frac{\bar m}{\bar\rho}\cdot\nabla (\bar F_\rho  - \dx  ( \bar F_q )) \, ,
   \end{align*}
and therefore
\begin{align}
& \partial_t F(\rho, \nabla\rho\left | \bar \rho, \nabla\bar \rho \right. ) 
\nonumber\\
& \ \ + \dx \left ( \vphantom{\frac{\bar m  }{\bar\rho}}m\big (  F_\rho  - \dx  (   F_q )- (\bar F_\rho  - \dx  ( \bar F_q )) \big )     
+    ( F_q - \bar F_q ) \dx m  \right.  
 + \left. \big ( S - \bar S\big)\frac{\bar m  }{\bar\rho} \right) \nonumber\\
&= D 
- \partial_{x_k} \left (\frac{\bar m_j }{\bar\rho}\right )\Big (
 \big (\rho F_\rho -F  - \rho  \dx  (   F_q ) -(\bar\rho \bar F_\rho -\bar F  - \bar\rho  \dx  (   \bar F_q )) \big )\delta_{kj} 
 +    F_{q_k}\partial_{x_j}\rho 
 - \bar F_{q_k}\partial_{x_j}\bar \rho
\Big)  \nonumber\\
 &\ \  - (\rho -\bar\rho) \frac{\bar m}{\bar\rho}\cdot\nabla (\bar F_\rho  - \dx  ( \bar F_q )) 
  + I_2 
  \nonumber \\
  &= D - I_{1a} - I_{1b}  + I_2\, ,
\label{eq:RelPotEnKorGen2}
\end{align}
where
\begin{align*}
  I_{1a} &=  \partial_{x_k} \left (\frac{\bar m_j }{\bar\rho}\right )\Big (
 \big (\rho F_\rho -F  - \rho  \dx  (   F_q ) -(\bar\rho \bar F_\rho -\bar F  - \bar\rho  \dx  (   \bar F_q )) \big )\delta_{kj} 
 +    F_{q_k}\partial_{x_j}\rho 
 - \bar F_{q_k}\partial_{x_j}\bar \rho
\Big)\, ;
\\
  I_{1b} &=  (\rho -\bar\rho) \frac{\bar m}{\bar\rho}\cdot\nabla (\bar F_\rho  - \dx  ( \bar F_q ))\, .
\end{align*}

After some rearrangement of  derivatives $I_2$ becomes 
\begin{align*}
 I_2 &= 
 \dx \left ( \frac{\bar m}{\bar\rho}\right ) 
  \Big ( \bar\rho\big(\bar F_{\rho\rho} - \partial_{x_i}(\bar F_{\rho q_i})  \big)(\rho-\bar\rho) + 
\bar\rho \big(\bar F_{\rho q_j} - \partial_{x_i}(\bar F_{q_j q_i})  \big)\partial_{x_j}(\rho-\bar\rho) 
 \\
 &\ -  \bar\rho\big(\bar F_{\rho q_i}  \partial_{x_i}(\rho-\bar\rho) +\bar F_{q_j q_i}  \partial_{x_i x_j}(\rho-\bar\rho)\big) \Big)
 \\
 &\ + 
 \frac{\bar m_j}{\bar\rho} \Big (  \partial_{x_j}\big(\bar F_{\rho}\big)(\rho-\bar\rho) + 
 \partial_{x_j}\big(\bar F_{q_l}\big)\partial_{x_l}(\rho-\bar\rho)
   - \partial_{x_l}\big(  \partial_{x_j}\bar\rho \big(  \bar F_{\rho q_l} (\rho -\bar \rho) + \bar F_{q_kq_l} \partial_{x_k}(\rho -\bar\rho) \big)\big) \Big)
 \\
 &\
+\dx \Big (\big (\bar F_{\rho q}(\rho -\bar \rho) +  \bar F_{qq}\nabla (\rho -\bar\rho)\big) \dx\bar m\Big )
\\
&= I_{2a} + I_{2b} - I_{2c} + I_{2d}\, ,
   \end{align*}
where 
\begin{align*}
  I_{2a} &=  \dx \left ( \frac{\bar m}{\bar\rho}\right ) 
  \Big ( \bar\rho\big(\bar F_{\rho\rho} - \dx(\bar F_{\rho q})  \big)(\rho-\bar\rho) + 
\bar\rho \big(\bar F_{\rho q_j} - \dx(\bar F_{q_j q})  \big)\partial_{x_j}(\rho-\bar\rho) 
 \\
 &\ -  \bar\rho\big(\bar F_{\rho q_i}  \partial_{x_i}(\rho-\bar\rho) +\bar F_{q_j q_i}  \partial_{x_i x_j}(\rho-\bar\rho)\big) \Big)\, ;
  \\
  I_{2b} &=  \frac{\bar m_j}{\bar\rho} \Big (  \partial_{x_j}\big(\bar F_{\rho}\big)(\rho-\bar\rho) + 
 \partial_{x_j}\big(\bar F_{q_l}\big)\partial_{x_l}(\rho-\bar\rho)\Big )\, ;
  \\
  I_{2c} &=  \frac{\bar m_j}{\bar\rho} \Big ( \partial_{x_l}\big(  \partial_{x_j}\bar\rho \big(  \bar F_{\rho q_l} (\rho -\bar \rho) + \bar F_{q_kq_l} \partial_{x_k}(\rho -\bar\rho) \big)\big) \Big)\, ;
  \\
  I_{2d} &=  \dx \Big (\big (\bar F_{\rho q}(\rho -\bar \rho) +  \bar F_{qq}\nabla (\rho -\bar\rho)\big) \dx\bar m\Big )\, .
\end{align*}

We shall now rearrange the various terms defined above as follows:
\begin{align*}
 -I_{1b} + I_{2b} &= 
   \frac{\bar m_j}{\bar\rho} \partial_{x_l}\partial_{x_j}\big((\rho -\bar\rho)\bar F _{q_l} \big)  
   -  \partial_{x_l}\left(\frac{\bar m_j}{\bar\rho}\partial_{x_j}(\rho-\bar\rho)\bar F _{q_l} \right)
   + \partial_{x_l}\left(\frac{\bar m_j}{\bar\rho}\right) \big(\bar F _{q_l}\partial_{x_j}(\rho-\bar\rho)  \big)\, ;
   \\
   -I_{2c} &=  -\partial_{x_l}\left (\frac{\bar m_j}{\bar\rho}  \partial_{x_j}\bar\rho \Big(  \bar F_{\rho q_l} (\rho -\bar \rho) + \bar F_{q_kq_l} \partial_{x_k}(\rho -\bar\rho) \Big) \right ) 
   \\
   &\ + \partial_{x_l}\left (\frac{\bar m_j}{\bar\rho}\right)  \Big( \partial_{x_j}\bar\rho \bar F_{\rho q_l} (\rho -\bar \rho) + \partial_{x_j}\bar\rho\bar F_{q_kq_l} \partial_{x_k}(\rho -\bar\rho) \Big)\, .
\end{align*}
Using the above relations in  \eqref{eq:RelPotEnKorGen2} we obtain
\begin{align}
& \partial_t F(\rho, \nabla\rho\left | \bar \rho, \nabla\bar \rho \right. ) + \dx J_1 = D 
\nonumber\\
&\ -
\dx\left (\frac{\bar m }{\bar\rho}\right )\Big (
 \big (\rho F_\rho -F  - \rho  \dx  (   F_q ) -(\bar\rho \bar F_\rho -\bar F  - \bar\rho  \dx  (   \bar F_q )) \big )
 \nonumber\\
&\  -
 \bar\rho\big(\bar F_{\rho\rho} - \dx(\bar F_{\rho q})  \big)(\rho-\bar\rho) - 
\bar\rho \big(\bar F_{\rho q_j} - \dx(\bar F_{q_j q})  \big)\partial_{x_j}(\rho-\bar\rho) 
 \nonumber\\
 &\ +  \bar\rho\bar F_{\rho q_i}  \partial_{x_i}(\rho-\bar\rho) +\bar\rho\bar F_{q_j q_i}  \partial_{x_i x_j}(\rho-\bar\rho)
 \Big)
 \nonumber\\
 &\ -  \partial_{x_l}\left (\frac{\bar m_j}{\bar\rho}\right)\Big(   F_{q_l}\partial_{x_j}\rho 
 - \bar F_{q_l}\partial_{x_j}\bar \rho  - \partial_{x_j}\bar\rho \bar F_{\rho q_l} (\rho -\bar \rho) - \partial_{x_j}\bar\rho\bar F_{q_kq_l} \partial_{x_k}(\rho -\bar\rho) - \bar F_{q_l}\delta_{jk}\partial_{x_k}(\rho-\bar\rho) \Big)
 \nonumber\\
 &\ + \frac{\bar m_j}{\bar\rho} \partial_{x_l}\partial_{x_j}\big((\rho -\bar\rho)\bar F _{q_l} \big)  
   -  \partial_{x_l}\left(\frac{\bar m_j}{\bar\rho}\partial_{x_j}(\rho-\bar\rho)\bar F _{q_l} \right) 
   \nonumber\\
   & = D - R_1 - R_2 +R_3\, ,
\label{eq:RelPotEnKorGen3}
\end{align}
where
\begin{align*}
  J_1 &=  
m\big (  F_\rho  - \dx  (   F_q )- (\bar F_\rho  - \dx  ( \bar F_q )) \big )     +    ( F_q - \bar F_q ) \dx m    \nonumber\\
& \ +  \big ( S - \bar S\big)\frac{\bar m  }{\bar\rho} 
- \big (\bar F_{\rho q}(\rho -\bar \rho) +  \bar F_{qq}\nabla (\rho -\bar\rho)\big) \dx\bar m
  \nonumber\\
  &\ + 
  \left (\frac{\bar m}{\bar\rho} \cdot \nabla_{x}\bar\rho\right ) \Big(  \bar F_{\rho q} (\rho -\bar \rho) + \bar F_{qq}\nabla (\rho -\bar\rho) \Big)\, ;
   \\
  R_{1} &=  \dx\left (\frac{\bar m }{\bar\rho}\right )\Big (
 \big (\rho F_\rho -F  - \rho  \dx  (   F_q ) -(\bar\rho \bar F_\rho -\bar F  - \bar\rho  \dx  (   \bar F_q )) \big )
 \nonumber\\
&\  -
 \bar\rho\big(\bar F_{\rho\rho} - \dx(\bar F_{\rho q})  \big)(\rho-\bar\rho) - 
\bar\rho \big(\bar F_{\rho q_j} - \dx(\bar F_{q_j q})  \big)\partial_{x_j}(\rho-\bar\rho) 
 \nonumber\\
 &\ +  \bar\rho\bar F_{\rho q_i}  \partial_{x_i}(\rho-\bar\rho) +\bar\rho\bar F_{q_j q_i}  \partial_{x_i x_j}(\rho-\bar\rho)
 \Big)\,
 ;
  \\
  R_{2} &=  \partial_{x_l}\left (\frac{\bar m_j}{\bar\rho}\right)\Big(   F_{q_l}\partial_{x_j}\rho 
 - \bar F_{q_l}\partial_{x_j}\bar \rho  - \partial_{x_j}\bar\rho \bar F_{\rho q_l} (\rho -\bar \rho) - \partial_{x_j}\bar\rho\bar F_{q_kq_l} \partial_{x_k}(\rho -\bar\rho) - \bar F_{q_l}\delta_{jk}\partial_{x_k}(\rho-\bar\rho) \Big)\, ;
  \\
  R_{3} &=   \frac{\bar m_j}{\bar\rho} \partial_{x_l}\partial_{x_j}\big((\rho -\bar\rho)\bar F _{q_l} \big)  
   -  \partial_{x_l}\left(\frac{\bar m_j}{\bar\rho}\partial_{x_j}(\rho-\bar\rho)\bar F _{q_l} \right)\, .
\end{align*}

Now we recall the notations 
\begin{align*}
 H(\rho , q) &=  q\otimes F_q(\rho,q)\,   ,\\ 
 s(\rho , q) & = \rho F_\rho(\rho,q)   + q\cdot F_q(\rho,q) - F(\rho,q)\, ,\\
 r(\rho , q) & = \rho F_q(\rho,q)\, ,
\end{align*}
and we readily obtain 
\begin{align*}
&R_2 = \nabla \left (\frac{\bar m}{\bar\rho} \right) : H(\rho, \nabla\rho\left | \bar \rho, \nabla\bar \rho \right. )\, ;
\\
 &\bar\rho\big(\bar F_{\rho\rho} - \dx(\bar F_{\rho q}) \big) =  
 s_\rho(\bar \rho , \nabla\bar\rho) - \dx \big (\bar\rho \bar F_{\rho q} \big )\, ;
 \\
 &\bar\rho \big(\bar F_{\rho q_j} - \dx(\bar F_{q_j q})  \big) = 
 s_{q_j}(\bar \rho , \nabla\bar\rho) - \dx \big (\bar\rho \bar F_{q_j q} \big )\, .
\end{align*}
Therefore 
\begin{align*}
  -R_1 + R_3 &=  \dx \left (\frac{\bar m}{\bar\rho} \dx \big( (\rho -\bar\rho)  \bar F_q\big) - \left(\frac{\bar m}{\bar\rho} \cdot\nabla ( \rho -\bar\rho)\right) \bar F_q \right ) 
   - \dx \left (\frac{\bar m}{\bar\rho}\right ) s(\rho, \nabla\rho\left | \bar \rho, \nabla\bar \rho \right. )
  \\
  & \ + \dx \left (\frac{\bar m}{\bar\rho}\right ) \Big (  \dx \big( \rho F_q\big) - \dx \big( \bar\rho \bar F_q\big) 
   - \dx \big (\bar\rho \bar F_{\rho q} \big ) (\rho - \bar\rho)-  \dx \big (\bar\rho \bar F_{q_j q} \big )\partial_{x_j}(\rho -\bar\rho)
   \\
   &\ - \bar\rho\bar F_{\rho q_i}  \partial_{x_i}(\rho-\bar\rho) - \bar\rho\bar F_{q_j q_i}  \partial_{x_i x_j}(\rho-\bar\rho) - \dx \big( (\rho -\bar\rho)  \bar F_q\big)
  \Big) 
  \\
  &= - \dx J_2 - \dx \left (\frac{\bar m}{\bar\rho}\right ) s(\rho, \nabla\rho\left | \bar \rho, \nabla\bar \rho \right. ) - \nabla \dx \left (\frac{\bar m}{\bar\rho}\right ) \cdot r(\rho, \nabla\rho\left | \bar \rho, \nabla\bar \rho \right. )\, ,
  \end{align*}
where
\begin{equation*}
 J_2 =   - \left (\frac{\bar m}{\bar\rho} \dx \big( (\rho -\bar\rho)  \bar F_q\big) - \left(\frac{\bar m}{\bar\rho} \cdot\nabla ( \rho -\bar\rho)\right) \bar F_q \right ) - \dx \left (\frac{\bar m}{\bar\rho}\right )   r(\rho, \nabla\rho\left | \bar \rho, \nabla\bar \rho \right. )\, .
\end{equation*}

\medskip
Using these identities in \eqref{eq:RelPotEnKorGen3} we finally express the relative potential energy in the form:
\begin{align}
& \partial_t F(\rho, \nabla\rho\left | \bar \rho, \nabla\bar \rho \right. ) + \dx J = D 
- \dx \left (\frac{\bar m}{\bar\rho}\right ) s(\rho, \nabla\rho\left | \bar \rho, \nabla\bar \rho \right. ) 
\nonumber\\
&\ 
- \nabla \left (\frac{\bar m}{\bar\rho} \right) : H(\rho, \nabla\rho\left | \bar \rho, \nabla\bar \rho \right. ) 
- \nabla \dx \left (\frac{\bar m}{\bar\rho}\right ) \cdot r(\rho, \nabla\rho\left | \bar \rho, \nabla\bar \rho \right. )\, ,
\label{eq:RelPotEnKorGenFinal}
\end{align}
where
\begin{align}
  J &=  J_1 + J_2 
  \nonumber
  \\
  &= 
  m\big (  F_\rho  - \dx  (   F_q )- (\bar F_\rho  - \dx  ( \bar F_q )) \big )     +    ( F_q - \bar F_q ) \dx m    
  \label{defJ} 
 \nonumber\\
& \ +  \big ( S - \bar S\big)\frac{\bar m  }{\bar\rho} 
- \big (\bar F_{\rho q}(\rho -\bar \rho) +  \bar F_{qq}\nabla (\rho -\bar\rho)\big) \dx\bar m
  \nonumber\\
  &\ + 
  \left (\frac{\bar m}{\bar\rho} \cdot \nabla_{x}\bar\rho\right ) \Big(  \bar F_{\rho q} (\rho -\bar \rho) + \bar F_{qq}\nabla (\rho -\bar\rho) \Big)
    \nonumber\\
  &\ - \left (\frac{\bar m}{\bar\rho} \dx \big( (\rho -\bar\rho)  \bar F_q\big) - \left(\frac{\bar m}{\bar\rho} \cdot\nabla ( \rho -\bar\rho)\right) \bar F_q \right ) - \dx \left (\frac{\bar m}{\bar\rho}\right )   r(\rho, \nabla\rho\left | \bar \rho, \nabla\bar \rho \right. )
\end{align}
and
\begin{equation*}
D= \rho \left ( \frac{m}{\rho} - \frac{\bar m }{\bar\rho} \right) \cdot \nabla \big ( F_\rho  - \dx  (   F_q )- (\bar F_\rho  - \dx  ( \bar F_q )) \big)\, .
\end{equation*}

The density of the relative kinetic energy is given by
\begin{equation}\label{def:rke}
\frac{1}{2}\rho \left | \frac{m}{\rho} -  \frac{\bar m}{\bar\rho}\right | ^2 .
\end{equation}
We recall  \eqref{eq:entrKortGen2} and \eqref{eq:potenKortGen} (see also \eqref{eq:Sigmagenrewrit}) and thus  the kinetic energy alone satisfies
\begin{equation*}
\partial_t \left  (\frac{1}{2} \frac{|m|^2}{\rho} \right ) + \dx  \left ( \frac{1}{2}m\frac{|m|^{2}}{\rho^{2}} \right ) = - 
m\cdot \nabla \big(  F_\rho- \dx   (F_q )\big)\, .
\end{equation*}
We write the difference for the equations satisfied by $\frac{m}{\rho}$ and $\frac{\bar m}{\bar \rho}$, 
\begin{align*}
&  \partial_t \left ( \frac{m}{\rho}-   \frac{\bar m}{\bar\rho}\right )+ \left( \frac{m}{\rho}\cdot\nabla\right )\left( \frac{m}{\rho}-   \frac{\bar m}{\bar\rho}\right ) + \left(\left( \frac{m}{\rho}-   \frac{\bar m}{\bar\rho}\right )\cdot\nabla \right)  \frac{\bar m}{\bar\rho}\\
&\ = -  \nabla \big (
 F_\rho  - \dx  (   F_q ) -   \big(\bar F_\rho  - \dx  (  \bar F_q )\big)
 \big)\, ,
  \end{align*}
and multiply  by $\frac{m}{\rho} -  \frac{\bar m}{\bar\rho}$  to obtain
\begin{align*}
&\frac{1}{2} \partial_t \left|  \frac{m}{\rho} -  \frac{\bar m}{\bar\rho}\right|^2+ \left (\frac{m}{\rho}\cdot\nabla\right)\left(\frac{1}{2}\left | \frac{m}{\rho} -  \frac{\bar m}{\bar\rho}  \right |^2 \right)+
 \left (\frac{m_i}{\rho} -  \frac{\bar m_i}{\bar\rho}\right )\left (\frac{m_j}{\rho} -  \frac{\bar m_j}{\bar\rho}\right )\partial_{x_j}\left ( \frac{\bar m_i}{\bar\rho}\right ) \\
 &\  =     - \left (\frac{m}{\rho} -  \frac{\bar m}{\bar\rho}\right )\cdot   \nabla \big (
 F_\rho  - \dx  (   F_q ) -   \big(\bar F_\rho  - \dx  (  \bar F_q )\big)
 \big) \, .
  \end{align*}
Using the conservation law \eqref{eq:Kortcap2}$_1$, we end up with
\begin{equation}
\begin{aligned}
  \partial_t \left( \frac{1}{2} \rho \left | \frac{m}{\rho}  -  \frac{\bar m}{\bar\rho}\right | ^2  \right)  &+ 
  \dx \left(\frac{1}{2}m\left | \frac{m}{\rho} -  \frac{\bar m}{\bar\rho}\right | ^2 \right)
  \\
  &= -D -  \rho \left( \frac{m}{\rho}-   \frac{\bar m}{\bar\rho}\right )\otimes \left( \frac{m}{\rho}-   \frac{\bar m}{\bar\rho}\right ) :\nabla  \left(    \frac{\bar m}{\bar\rho}\right )\, .
\end{aligned}
 \label{eq:relkinKortGen}
 \end{equation}
Adding \eqref{eq:RelPotEnKorGenFinal} and \eqref{eq:relkinKortGen} leads to the identity for the transport of the relative energy,
\begin{align}
& \partial_t \left ( \frac{1}{2} \rho \left | \frac{m}{\rho} -  \frac{\bar m}{\bar\rho}\right | ^2 +  F(\rho, \nabla\rho\left | \bar \rho, \nabla\bar \rho \right. ) \right )+ \dx \left ( \frac{1}{2}m\left | \frac{m}{\rho} -  \frac{\bar m}{\bar\rho}\right | ^2  + J \right )
\nonumber\\
&\ = -  \rho \left( \frac{m}{\rho}-   \frac{\bar m}{\bar\rho}\right )\otimes \left( \frac{m}{\rho}-   \frac{\bar m}{\bar\rho}\right ) :\nabla  \left(    \frac{\bar m}{\bar\rho}\right )
- \dx \left (\frac{\bar m}{\bar\rho}\right ) s(\rho, \nabla\rho\left | \bar \rho, \nabla\bar \rho \right. ) 
\nonumber\\
&\ 
- \nabla \left (\frac{\bar m}{\bar\rho} \right) : H(\rho, \nabla\rho\left | \bar \rho, \nabla\bar \rho \right. ) 
- \nabla \dx \left (\frac{\bar m}{\bar\rho}\right ) \cdot r(\rho, \nabla\rho\left | \bar \rho, \nabla\bar \rho \right. )\, ,
\label{eq:RelEnKorGenFinal}
\end{align}
where the flux $J$ is defined in \eqref{defJ}.

\section{Proof of Lemma \ref{lem:fre}}\label{sec:proofmc}
The sole purpose of this appendix is giving the details of the proof of Lemma \ref{lem:fre}.
\begin{proof}[Proof of Lemma \ref{lem:fre}]
We start with  direct calculations
\begin{equation}\label{eq:re1a}
\begin{split}
  \eta^\alpha_t 
=& h'(\rhoa)\rhoa_t + \frac{\ma \cdot \ma_t}{\rhoa} - \frac{\norm{\ma}^2}{2(\rhoa)^2}\rhoa_t + \alpha C_\kappa (\rho - \ca)(\rhoa_t - \ca_t)\\
&+ C_\kappa \nabla \ca \cdot \nabla \ca_t - h(\rho)\rho_t + C_\kappa \nabla \rho \cdot \nabla \rho_t -  h''(\rho)\rho_t (\rhoa - \rho) - h'(\rho)\rhoa_t\\
&+h'(\rho)\rho_t + \frac{  m \cdot   m_t}{\rho^2}\rhoa - \frac{\norm{  m}^2}{\rho^3}\rhoa \rho_t + \frac{\norm{  m}^2}{\rho^2}\rhoa_t\\
&-C_\kappa \nabla \rho_t \cdot\nabla \ca - C_\kappa \nabla \rho \cdot \nabla \ca_t - \frac{  m_t \cdot \ma}{\rho} 
- \frac{  m \cdot \ma_t}{\rho} + \frac{  m \cdot \ma }{\rho^2} \rho_t
\end{split} 
\end{equation}
and
\begin{equation}\label{eq:re1b}
\begin{split}
\div   q^\alpha &=
 \div(\ma)h'(\rhoa) + \ma \cdot \nabla h'(\rhoa) + \frac{\ma \otimes \ma}{(\rhoa)^2} : \nabla \ma 
 + \frac{\norm{\ma}^2}{2(\rhoa)^2} \div(\ma) \\
 &
- \frac{\norm{\ma}^2}{(\rhoa)^3} \ma \cdot \nabla \rhoa
 -\nabla h'(\rho)\cdot \ma  - h'(\rho) \div(\ma) + \frac{  m \otimes \ma}{\rho^2} : \nabla   m\\
&
 + \frac{\norm{  m}^2}{2\rho^2}\div(\ma)
 - \frac{\norm{  m}^2}{\rho^3}\ma \cdot \nabla \rho - \frac{  m \otimes \ma}{\rho \rhoa} : \nabla \ma  
 - \frac{\ma\otimes \ma}{\rho \rhoa} : \nabla   m 
\\&
- \frac{  m \cdot \ma}{\rho \rhoa} \div(\ma)
 + \frac{  m \cdot \ma}{\rho^2 \rhoa} \ma \cdot \nabla \rho + \frac{  m \cdot \ma}{\rho (\rhoa)^2} \ma \cdot \nabla \rhoa
 - \frac{\div(  m)}{\rho} p(\rhoa) 
\\ &
+ \frac{  m \cdot \nabla \rho}{\rho^2} p(\rhoa)  - \frac{  m\cdot \nabla p(\rhoa)}{\rho}
 + \frac{\div(  m)}{\rho} p(\rho) - \frac{  m \cdot \nabla \rho}{\rho^2} p(\rho)  + \frac{  m\cdot \nabla p(\rho)}{\rho}\, .
\end{split} 
\end{equation}
Inserting the evolution equations \eqref{NSK-sec} and \eqref{lo} in \eqref{eq:re1a} we obtain
\begin{equation}\label{eq:re2}
\begin{split}
  \eta^\alpha_t
=& \frac{\norm{\ma}^2}{(\rhoa)^3} \nabla \rho \cdot \ma
 - \frac{\ma\otimes \ma }{(\rhoa)^2} : \nabla \ma
- \frac{\norm{\ma}^2}{(\rhoa)^2}\div(\ma) 
- \frac{\ma \cdot \nabla p(\rhoa)}{\rhoa} 
+ \frac{\ma}{\rhoa} \cdot\div(\sigma[\va])
\\
&
+ C_\kappa\ma \cdot \nabla \triangle \ca
+ \frac{\norm{\ma}^2}{2(\rhoa)^2}\div(\ma) 
+ C_\kappa \triangle \ca \div(\ma) 
+ C_\kappa\div(\nabla \ca \ca_t)
\\
& 
-C_\kappa \nabla \rho \cdot \nabla \div(  m)  
+ h''(\rho) \div(  m) (\rhoa - \rho)
+\norm{  m}^2 \frac{\rhoa}{\rho^4}   m \cdot \nabla \rho
 - \frac{\rhoa}{\rho^3}   m \otimes   m : \nabla   m
\\
&- \frac{\rhoa}{\rho^3} \norm{  m}^2 \div(  m)  
- \frac{\rhoa}{\rho^2}   m\cdot \nabla p(\rho)
+ \frac{\rhoa}{\rho^2}   m \cdot \div(\sigma[  u])
 + C_\kappa \frac{\rhoa}{\rho}  m \cdot \nabla \triangle \rho 
+ \frac{\rhoa}{\rho^3}\norm{  m}^2 \div(  m)
\\
&- \frac{\norm{  m}^2}{2\rho^2}  \div (\ma) 
+ C_\kappa \nabla \div(  m) \cdot \nabla \ca 
+ C_\kappa \triangle \rho (\ca_t - \rhoa_t)
-C_\kappa\div(\ca_t \nabla \rho)
\\
&
+ C_\kappa \nabla \rho \cdot \nabla \div(\ma)
-\frac{  m \cdot \ma}{\rho^3} \nabla \rho \cdot   m
+ \frac{\ma\otimes   m}{\rho^2} : \nabla   m 
+ \frac{\div (  m)}{\rho^2}  m \cdot  \ma
 +\frac{\ma}{\rho}\cdot \nabla p(\rho)
\\
&
-\frac{\ma}{\rho} \cdot\div(\sigma[  u]) 
- C_\kappa\ma \cdot \nabla \triangle \rho
- \frac{\ma \cdot   m}{\rho (\rhoa)^2}\nabla \rhoa \cdot \ma 
+ \frac{  m \otimes \ma}{\rho \rhoa} :\nabla   \ma
\\
&
+ \frac{  m \cdot \ma}{\rho\rhoa} \div(\ma) 
+ \frac{  m}{\rho}\cdot\nabla p(\rhoa) 
- \frac{  m}{\rho}\cdot\div(\sigma[\va]) 
- C_\kappa \frac{\rhoa}{\rho}   m\cdot \nabla \triangle \ca 
- \frac{\div(  m)}{\rho^2}   m \cdot \ma .
\end{split} 
\end{equation}
Adding  \eqref{eq:re1b} and \eqref{eq:re2} we observe that several terms cancel out and we obtain
\begin{equation}
 \label{eq:re3}
\begin{split}
 &  \eta^\alpha_t + \div   q^\alpha
\\
&= \frac{\ma}{\rhoa} \cdot\div(\sigma [\va]) 
+ \frac{\rhoa}{\rho^2}  m \cdot \div(\sigma [  u]) 
- \frac{\ma}{\rho} \cdot\div(\sigma [  u])
- \frac{  m}{\rho} \cdot\div(\sigma [\va])
\\
&
+C_\kappa \div \Big( (\ma -   m) \triangle( \ca - \rho) + \nabla (\ca - \rho) \ca_t + \nabla \rho \div(\ma-   m) 
+ \div(  m) \nabla \ca
\Big)
\\
&+
 C_\kappa \Big(\frac{\rhoa}{\rho}  -1\Big)   m \cdot \nabla \triangle (\rho - \ca) 
+ C_\kappa \nabla \rho \cdot \nabla(\rhoa_t - \ca_t)
\\
&
+ \frac{1}{\rho^2}   m \otimes \nabla   \rho :  \Big( \frac{  m}{\rho}- \frac{\ma}{\rhoa} \Big)\otimes \Big( \frac{  m \rhoa}{\rho} -\ma\Big)
\\
&
+ \frac{1}{\rho^2}   m \cdot \nabla   \rho \big( p(\rhoa) - p(\rho) - p'(\rho)(\rhoa - \rho) 
\big)
\\
&
+ \frac{1}{\rho} \nabla   m : \Big( \ma -   m \frac{\rhoa}{\rho}\Big)\otimes \Big( \frac{  m}{\rho} - \frac{\ma}{\rhoa}\Big)
\\
&
- \frac{1}{\rho} \div(  m)  \big( p(\rhoa) - p(\rho) - p'(\rho)(\rhoa - \rho) 
\big)\, .
\end{split}
\end{equation}
Due to the periodic boundary conditions, \eqref{eq:re3} implies the assertion of the lemma.
\end{proof}

\def\cprime{$'$}

\end{document}